\newfont{\msam}{msam10}
\newtheorem{theorem}[]{Theorem}
\newtheorem{proposition}[]{Proposition}
\newtheorem{corollary}[]{Corollary}
\newtheorem{lemma}[]{Lemma}
\theoremstyle{definition}
\newtheorem{definition}[]{Definition}
\newtheorem{remark}[]{Remark}
\newtheorem{example}[]{Example}
\def\remark{\noindent\textbf{Remark.}}
\let\nc\newcommand
\nc{\la}{\label}
\def\bg{\begin}
\def\bthm{\begin{theorem}}
\def\ethm{\end{theorem}}
\def\blemma{\begin{lemma}}
\def\elemma{\end{lemma}}
\def\bproof{\begin{proof}}
\def\eproof{\end{proof}}
\def\bprop{\begin{proposition}}
\def\eprop{\end{proposition}}
\def\Gr{\mbox{\rm{Gr}}^{\mbox{\scriptsize{\rm{ad}}}}}
\def\dlog{{\tt dlog}}
\def\F{\mathcal F}
\def\P{\mathcal{P}}
\def\Z{\mathbb{Z}}
\def\F{\mathcal{F}}
\def\O{\mathcal{O}}
\def\A{\mathbb{A}}
\def\L{\mathcal{L}}
\def\N{\mathbb{N}}
\def\I{\mathcal{I}}
\def\J{\mathcal{J}}
\def\R{\mathscr{I}}
\def\U{\mathcal{U}}
\def\D{\mathcal{D}}
\def\DO{\,\overline{\!\D\!}\,}
\def\DD{\widetilde{\mathcal D}}
\def\MM{\widetilde{M}}
\def\LL{\widetilde{L}}
\def\NN{\widetilde{N}}
\def\RR{R}
\def\DB{(\Omega^1 B)_{\natural}}
\def\DC{\mathscr{D}}
\def\MO{\,\overline{\!M\!}\,}
\def\RO{\overline{R}}
\def\LO{\overline{L}}
\def\NO{\,\overline{\!N\!}\,}
\def\fo{\overline{f}}
\def\tA{\tilde{A}}
\def\tsigma{\tilde{\sigma}}
\def\teta{\tilde{\eta}}
\def\ttau{\tilde{\tau}}
\def\hnu{\bar{\nu}}
\def\tvarrho{\tilde{\varrho}}
\def\AA{A^{\otimes 2}}
\def\BB{B^{\otimes 2}} 
\def\oA{A^{\mbox{\scriptsize{\rm{o}}}}}
\def\reg{\mbox{\scriptsize{\rm{reg}}}}
\def\eA{A^{\mbox{\scriptsize{\rm{e}}}}}
\def\eB{B^{\mbox{\scriptsize{\rm{e}}}}}
\def\tM{\tilde{I}}
\def\tL{\tilde{I}}
\def\tf{\tilde{f}}
\def\tbtheta{\tilde{i}}
\def\biota{\boldsymbol{\iota}}
\def\btheta{i}
\def\bi{\boldsymbol{\eta}}
\def\bt{\boldsymbol{t}}
\def\tbt{\tilde{\boldsymbol{t}}}
\def\tbi{\tilde{\boldsymbol{\eta}}}
\def\ba{\boldsymbol{\alpha}}
\def\tba{\tilde{\boldsymbol{\alpha}}}
\def\bp{\boldsymbol{\pi}}
\def\v{\mbox{\rm v}}
\def\hv{\hat{\mbox{\rm v}}}
\def\w{\mbox{\rm w}}
\def\hw{\hat{\mbox{\rm w}}}
\def\vp{\varphi}
\def\bvp{\widetilde{\psi}}
\def\ha{\hat{a}}
\def\hr{\hat{r}}
\def\hd{\hat{d}}
\def\hsigma{\bar{\sigma}}
\def\hdel{\hat{\Delta}}
\def\sdel{\bar{\Delta}}
\def\sX{\bar{X}}
\def\sY{\bar{Y}}
\def\sZ{\bar{Z}}
\def\sw{\bar{\w}}
\def\sv{\bar{\v}}
\def\m{\mathfrak{m}}
\def\c{\mathbb{C}}
\def\CC{\mathcal{C}}
\def\GrM{\mbox{\sf GrMod}}
\def\Tors{\mbox{\sf Tors}}
\def\Fdim{\mbox{\sf Tors}}
\def\Qgr{\mbox{\sf Qgr}}
\def\bV{\boldsymbol{V}}
\def\bP{\boldsymbol{P}}
\def\DDer{{\mathbb D}{\rm er}}
\nc{\Hom}{{\rm{Hom}}}
\nc{\Ext}{{\rm{Ext}}}
\nc{\htau}{{\bar{\tau}}}
\nc{\HOM}{\underline{\rm{Hom}}}
\nc{\EXT}{\underline{\rm{Ext}}}
\nc{\TOR}{\underline{\rm{Tor}}}
\nc{\End}{{\rm{End}}}
\nc{\GL}{{\rm{GL}}}
\nc{\PGL}{{\rm{PGL}}}
\nc{\G}{{\rm{G}}}
\nc{\Rep}{{\rm{Rep}}}
\nc{\ad}{{\rm{ad}}}
\nc{\dlim}{\varinjlim}
\newcommand{\Mod}{{\tt{Mod}}}
\newcommand{\Mat}{{\tt{Mat}}}
\newcommand{\HH}{H}
\newcommand{\res}{{\rm{res}}}
\newcommand{\Frac}{{\rm{Frac}}}
\newcommand{\eps}{\varepsilon}
\newcommand{\Tor}{{\rm{Tor}}}
\newcommand{\Sym}{{\rm{Sym}}}
\newcommand{\Spec}{{\rm{Spec}}}
\newcommand{\Pic}{{\rm{Pic}}}
\newcommand{\Aut}{{\rm{Aut}}}
\newcommand{\id}{{\rm{Id}}}
\newcommand{\Der}{{\rm{Der}}}
\newcommand{\rk}{{\rm{rk}}}
\newcommand{\Tr}{{\rm{Tr}}}
\newcommand{\tr}{{\rm{tr}}}
\newcommand{\Ker}{{\rm{Ker}}}
\newcommand{\Coker}{{\rm{Coker}}}
\newcommand{\im}{{\rm{Im}}}
\newcommand{\balpha}{{\boldsymbol{\alpha}}}
\newcommand{\bmu}{{\boldsymbol{\mu}}}
\newcommand{\bn}{{\boldsymbol{n}}}
\newcommand{\bk}{{\boldsymbol{k}}}
\newcommand{\bU}{{\boldsymbol{U}}}
\newcommand{\bL}{{\boldsymbol{L}}}
\newcommand{\bM}{{\boldsymbol{M}}}
\newcommand{\be}{{\boldsymbol{\rm e}}}
\newcommand{\Ui}{U_{\infty}}
\newcommand{\blambda}{{\boldsymbol{\lambda}}}
\newcommand{\bgg}{{\boldsymbol{g}}}
\newcommand{\Vi}{V_{\infty}}
\newcommand{\ei}{e_{\infty}}
\newcommand{\lambdai}{\lambda_{\infty}}
\newcommand{\into}{\,\,\hookrightarrow\,\,}
\newcommand{\too}{\,\,\longrightarrow\,\,}
\newcommand{\onto}{\,\,\twoheadrightarrow\,\,}
\numberwithin{equation}{section}
\numberwithin{theorem}{section}
\numberwithin{lemma}{section}
\numberwithin{proposition}{section}
\numberwithin{corollary}{section}
\numberwithin{example}{section}
\numberwithin{remark}{section}
\begin{document}
\title[Ideals of Differential Operators on Curves]{
Ideals of Rings of Differential Operators on Algebraic
Curves\\*[0.8ex] (With an Appendix by George Wilson)}
\date{\today}
\author{Yuri Berest}
\address{Department of Mathematics, Cornell University, Ithaca,
NY 14853-4201, USA}
\email{berest@math.cornell.edu}
%
%
\author{Oleg Chalykh}
\address{School of Mathematics, University of Leeds, Leeds LS2 9JT, UK}
\email{oleg@maths.leeds.ac.uk}
%
%
\address{Mathematical Institute, 24--29 St Giles, Oxford OX1 3LB, UK}
\email{wilsong@maths.ox.ac.uk}
%
%
%
%
%
\maketitle
\section{Introduction}
\la{intro}
Let $ X $ be a smooth affine irreducible curve over $ \c $, and
let $ \D = \D(X) $ be the ring of global differential operators on
$X$. In this paper, we give a geometric classification of left
ideals in $ \D $ and study the natural action of the Picard group
of $\,\D \,$ on the space of isomorphism classes of such ideals.
Our results generalize the classification of left ideals of the
first Weyl algebra $ A_1(\c) $ in \cite{BW1} and \cite{BW2};
however, our approach is quite different.

As shown in \cite{BW1, BW2}, the ideal classes of $ A_1(\c) $ are
parametrized by finite-dimensional algebraic varieties $ \CC_n $
called the Calogero-Moser spaces. The starting point for the present
paper was the observation of Crawley-Boevey (see \cite{CB1}) 
that the same varieties $ \CC_n $  parametrize finite-dimensional irreducible 
representations of certain (infinite-dimensional) algebras associated to graphs.
Specifically, the algebras in question are deformed
preprojective algebras $\, \Pi^\blambda(Q) \,$ (see \cite{CBH}); 
the corresponding graph $ Q $ is the framed Dynkin diagram of simplest type 
$ \tilde{A}_0 $.

Trying to understand the relation between the ideals of $ A_1(\c) $
and irreducible representations of $ \Pi^\blambda(Q) $, we
came up with a new construction of the Calogero-Moser correspondence,
which, besides the Weyl algebra, applied to noncommutative deformations
of Kleinian singularities corresponding to
Dynkin diagrams of other types (see \cite{BCE}). In this paper,
we develop a geometric version of this construction in which
graphs are replaced by algebraic curves.

We begin with a brief overview of our main results. Let $ \R(\D) $ be
the set of isomorphism classes of left ideals in $ \D $. Since $
\D $ is a Noetherian hereditary domain, every ideal of $\D$ is a
projective $\D$-module of rank $1$, so $ \R(\D) $ can be
equivalently defined as the set of isomorphism classes of such
modules. The Grothendieck group $ K_0(\D) $ of finite rank
projective $ \D$-modules is isomorphic to the (algebraic) $K$-group
$\, K_0(X) \,$ of $X$, while $\, K_0(X) \cong \Z \oplus \Pic(X)
\,$, where $ \Pic(X) $ is the Picard group of $X$. Combining these
isomorphisms, we may assign to each ideal class $\, [M] \in \R(\D)
\,$ an element of $ \Pic(X) $ which determines $ [M] $ up to
equivalence in $ K_0(\D) $. In other words, there is a natural map
$\,\gamma:\,\R(\D) \to \Pic(X) \,$, whose fibres are precisely the
{\it stable} isomorphism classes of ideals in $ \D $. 
Our problem reduces thus to describing the fibres of $ \gamma $.

We approach this problem in two steps. First, we introduce the
Calogero-Moser spaces $\, \CC_n(X, \I) \,$ for an arbitrary curve
$ X $ and a line bundle $ \I $ on $ X $, building on the
observation of Crawley-Boevey. For any associative algebra $B$,
there is a `universal' construction of deformed preprojective
algebras $\,\Pi^\blambda(B) \,$ over $B$, with parameters
$\,\blambda \in \c \otimes_{\Z} K_0(B) \,$ (see \cite{CB} and 
Section~\ref{DPA} below). Using this construction, we define $\,
\CC_n(X, \I) \,$ as representation varieties of $\,
\Pi^\blambda(B) \,$ over a triangular matrix extension of the ring
$ A = \O(X) $ of regular functions on $X$ by the line bundle $\I$.
This extension $ B = A[\I] $  abstracts the idea of `framing' a
quiver by adjoining a distinguished new vertex `$ \infty $' and
arrows from $ \infty \,$; geometrically, it can be thought of as a
noncommutative thickening\footnote{For a natural 
interpretation of this kind of construction in terms of derived algebraic 
geometry, see \cite{Kon}.} of $\,\Spec(A \times \c) = X \bigsqcup\,{\rm pt}\,$. We
note that $\, \CC_n(X, \I) \,$ behaves functorially with respect to
$ \I $; in particular, the quotient spaces $\,\overline{\CC}_n(X,\I) :=
\CC_n(X, \I)/\Aut_X(\I) \,$ depend only on the class of $ \I $ in
$ \Pic(X) $. We write $\, \overline{\CC}_n(X) \,$  for the disjoint union of 
$\,\overline{\CC}_n(X, \I)\,$ over $\, \Pic(X) \,$.

Our first main result is a generalization to an arbitrary $X$ of a known 
theorem of G.~Wilson (see \cite{Wi}).
\begin{theorem}[see Theorem~\ref{T6}]
\la{IT6}
For each $ n \geq 0 $ and $ [\I]\in \Pic(X) $, $\,
\CC_{n}(X,\I) \,$ is a smooth affine irreducible variety of
dimension $ 2n $.
\end{theorem}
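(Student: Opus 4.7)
The plan is to realize $\CC_n(X,\I)$ as an algebraic symplectic reduction and then combine standard moment-map arguments with a ``support'' morphism to the symmetric product $\Sym^n(X)$.

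First I would unwind the definition. A representation of $\Pi^\blambda(B)$ for $B = A[\I]$ of the relevant dimension vector amounts to a quadruple $(V,\rho,i,j)$, where $V\cong\c^n$, $\rho:A\to\End(V)$ is an $n$-dimensional $A$-module structure on $V$, and $i,j$ are the ``framing arrows'' between $V$ and the one-dimensional representation at the distinguished vertex $\infty$ (with $i$ twisted by $\I$). The preprojective relation becomes a single moment-map equation $\mu(\rho,i,j) = \blambda\cdot\id_V$, and $\CC_n(X,\I) = \mu^{-1}(\blambda)/\!/\GL(V)$. Smoothness and the dimension count then follow from the standard symplectic-reduction recipe once one checks that $\GL(V)$ acts freely on $\mu^{-1}(\blambda)$. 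The latter amounts to showing that no such representation admits a nontrivial $\rho$-invariant subspace compatible with $(i,j)$; this is forced by the framing component of the moment equation together with the specific value of $\blambda$ dictated by the universal construction. Free action implies $d\mu$ is surjective onto the trace-free part of $\End(V)$, so $\mu^{-1}(\blambda)$ is smooth, the quotient is smooth, and a direct count exploiting $\dim X = 1$ gives dimension $2n$.

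The main obstacle is irreducibility. I would introduce the support morphism
\[
\pi:\CC_n(X,\I)\longrightarrow \Sym^n(X),
\]
sending $(\rho,i,j)$ to the support cycle (with multiplicities) of the $A$-module $(V,\rho)$. The base is irreducible since $X$ is, so it suffices to show that $\pi$ is surjective with irreducible generic fibre and constant fibre dimension. Over the open locus of $n$ distinct points of $X$, the module $V$ splits as a direct sum of skyscrapers and the preprojective relation reduces to an explicit equation on the framing data; the fibre is then visibly a torsor under a connected algebraic group assembled from the fibres of $\I$, and in particular irreducible. The delicate point is controlling $\pi$ over the discriminant strata of $\Sym^n(X)$, where $V$ need not be semisimple and the structure of framed $A$-modules becomes more intricate. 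Here I would argue either by specialisation from the generic stratum (using flatness or an equidimensionality argument from the fact that we have already proved smoothness of the total space) or by pulling $\pi$ back through the Hilbert scheme $\mathrm{Hilb}^n(X)$, whose irreducibility for a smooth curve is classical, thereby resolving the fibre pathology.
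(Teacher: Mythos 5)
Your smoothness argument is essentially the one in the paper: the specific weight $\blambda=(1,-n)$ forces every $\Pi^\blambda(B)$-module of dimension vector $(n,1)$ to be simple, the $\GL$-action on $\mu^{-1}(\blambda)$ is therefore free, Luna's slice theorem and surjectivity of $d\mu$ give smoothness, and the dimension count $2n$ follows. That part is fine.

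The irreducibility argument is where you diverge from the paper, and it has a genuine gap. You propose the support morphism $\pi:\CC_n(X,\I)\to\Sym^n(X)$ and want to conclude irreducibility from irreducibility of $\Sym^n(X)$ together with control of the fibres — but the relevant lemma (the paper's Lemma~\ref{irredu}, quoted from \cite{CB2}) requires \emph{all} fibres to be irreducible of constant dimension, and you yourself flag that you have no such control over the discriminant strata, where $V$ is non-semisimple. Neither of your two suggested repairs closes this. The Hilbert-scheme detour is vacuous here: for a smooth curve, $\mathrm{Hilb}^n(X)\cong\Sym^n(X)$, so there is no resolution to pull back through. The ``specialisation from the generic stratum using smoothness of the total space'' idea also does not close the gap as stated: smoothness plus the moment-map count give that $\CC_n(X,\I)$ has pure dimension $2n$, but a priori there could be an entire connected component sitting over the (codimension-one) discriminant of $\Sym^n(X)$, provided the fibres there have dimension $\ge n+1$ — and precisely over the non-reduced strata the fibres can jump. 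Ruling this out is the content of a real argument, not a formality. The paper sidesteps the problem by \emph{not} working over $\Sym^n(X)$ at all. Instead it works on the affine representation varieties before taking quotients: first it shows $\Rep_S(B,\bn)$ is irreducible via the projection to $\Rep(A,n)$, whose fibres are literally the vector spaces $\Hom_A(\I,V)$ (constant dimension $n$ over the \emph{whole} base, no stratification issue); then it analyses $\pi:\Rep_S(\Pi,\bn)\to\Rep_S(B,\bn)$, isolates the dense open $\,\U\,$ where $\End_B(\bV)\cong\c$, shows $\pi^{-1}(\U)$ is irreducible of dimension $n^2+2n$, and finally establishes that $\pi^{-1}(\U)$ is dense by a two-sided dimension estimate: $\dim\pi^{-1}(\im\pi\setminus\U)<n^2+2n$ from above, while every component of $\Rep_S(\Pi,\bn)$ has dimension $\ge n^2+2n$ from below (the moment-map fibre inequality using $\Rep_S(R,\bn)\cong T^*\Rep_S(B,\bn)$). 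That two-sided estimate is exactly the step your support-map argument is missing an analogue of. If you want to salvage the $\Sym^n(X)$ route you would need to produce the corresponding lower bound on every component together with an upper bound on the locus over the discriminant; as written, the argument does not establish irreducibility.
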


Now, in view of functoriality of $ \Pi^\blambda$-construction,
there is a natural map $\,\Pi^{\blambda}(B) \to \Pi^1(A) \,$ lifting
the extension $\, B \to A \,$. On the other hand, by a theorem of Crawley-Boevey 
(see \cite{CB}), $\, \Pi^1(A) $ can be identified with the ring $ \D $ of differential
operators on $X$. The resulting algebra homomorphism $\,\btheta:\,\Pi^{\blambda}(B) \to \D\,$ 
relates the module categories of $ \Pi^{\blambda}(B)$ and $ \D $ in a fairly interesting way. 
To be precise, we will prove 
\begin{theorem}[see Theorem~\ref{P1}]
\la{IP1}
The canonical functors $\,(\btheta^*,\,\btheta_*\,,\,\btheta^!)\,$
induced by $\, \btheta: \, \Pi^{\blambda} \to \D \,$ on the (bounded) derived categories 
form a {\it recollement set-up} in the sense of \cite{BBD}{\rm :}
\begin{equation*}
\begin{diagram}[small, tight]
\DC^{b}(\Mod\, \D) \ & \ \pile{\lTo^{\btheta^*}\\
\rTo^{\btheta_{\!*}} \\
\lTo^{\btheta^!}} \ &
\ \DC^{b}(\Mod\, \Pi^\blambda)\ & \ \pile{\lTo^{j_!}\\
\rTo^{j^*}\\ \lTo^{j_*}} \ & \ \DC^{b}(\Mod\, U^\blambda) \\
\end{diagram}\ ,
\end{equation*}
where $ U^\blambda $ is a certain (spherical) subalgebra of $ \Pi^\blambda(B) $.
\end{theorem}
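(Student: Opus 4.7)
My plan is to recognize the homomorphism $\btheta$ as being of the standard idempotent-quotient type and then to deduce the recollement from the Cline--Parshall--Scott criterion for a stratifying idempotent ideal. The triangular extension $B = A[\I]$ carries canonical orthogonal idempotents $e_0$ and $\ei$ with $e_0 + \ei = 1$, corresponding to the two vertices in the framed-quiver picture; these lift to $\Pi^\blambda := \Pi^\blambda(B)$. The first step is to verify that, under Crawley--Boevey's isomorphism $\Pi^1(A) \cong \D$, the map $\btheta$ becomes the canonical projection
$$\Pi^\blambda \,\onto\, \Pi^\blambda / \Pi^\blambda \ei \Pi^\blambda \,\cong\, \D,$$
which is a bookkeeping check involving the universal construction of $\Pi^\blambda(-)$ from Section~\ref{DPA} and the $\ei$-reduction of the moment-map relation. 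Setting $U^\blambda := \ei \Pi^\blambda \ei$ gives the spherical subalgebra named in the statement.

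With this idempotent data in place, the six functors of the proposed recollement are the standard ones: $\btheta_*$ is restriction along the quotient, $\btheta^* = \D \otimes^{L}_{\Pi^\blambda} -$, $\btheta^! = R\Hom_{\Pi^\blambda}(\D, -)$, while $j^* = \ei \cdot -$, $j_! = \Pi^\blambda \ei \otimes^{L}_{U^\blambda} -$, and $j_* = R\Hom_{U^\blambda}(\ei \Pi^\blambda, -)$. The adjunctions, the identities $\btheta^* \btheta_* \simeq \id$ and $j^* j_! \simeq \id$, the vanishing $j^* \btheta_* = 0$, and the two localisation triangles all follow formally, provided the idempotent ideal $\Pi^\blambda \ei \Pi^\blambda$ is \emph{stratifying} in the sense of Cline--Parshall--Scott, equivalently that the natural map
$$\Pi^\blambda \ei \otimes^{L}_{U^\blambda} \ei \Pi^\blambda \,\longrightarrow\, \Pi^\blambda \ei \Pi^\blambda$$
is a quasi-isomorphism. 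To pass from module categories to \emph{bounded} derived categories one additionally needs finite global dimension of $\Pi^\blambda$ and $U^\blambda$; I expect both to follow from the hereditary nature of $A = \O(X)$ via the standard two-term bimodule resolution of a deformed preprojective algebra over a base of Hochschild dimension one.

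The heart of the argument, and the step I expect to be the main obstacle, is the Tor-vanishing
$$\Tor^{U^\blambda}_i(\Pi^\blambda \ei,\,\ei \Pi^\blambda) = 0 \qquad (i > 0),$$
which is implicit in the stratifying condition. My approach is to extract the $\ei$-component of the bimodule Koszul-type resolution of $\Pi^\blambda$ over $B$ and to show that it yields a projective resolution of $\Pi^\blambda \ei$ of length one over $U^\blambda$. The delicate point is the role of the parameter $\blambda$: the preprojective trace relation forces its $\ei$-component to take a specific non-zero value (reflecting the rank of the framing $\I$), and this non-degeneracy is precisely what ensures that the relevant differential at the infinity vertex is invertible on a suitable module, so that the resolution terminates and the higher Tor-groups vanish. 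Once this is in hand, the BBD axioms follow from the general Cline--Parshall--Scott formalism, and the recollement is complete.
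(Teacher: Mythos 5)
Your overall route --- recognize $\btheta$ as the idempotent quotient $\Pi\onto\Pi/\Pi\ei\Pi\cong\D$, set $U=\ei\Pi\ei$, and reduce the recollement to a standard criterion --- matches the paper's, which appeals to K\"onig's Cor.~14 rather than Cline--Parshall--Scott. The two criteria differ in detail: the paper actually establishes the stronger heredity-type property that the multiplication map $\Pi\ei\otimes_U\ei\Pi\to\Pi$ is injective with a projective source (Lemma~\ref{Lrec1}), giving the explicit length-one projective resolution \eqref{scom} of $\D$; this, combined with pseudo-flatness of $\btheta$ (Lemma~\ref{recoll}), yields $\Tor^\Pi_i(\D,\D)=0$ for all $i\ge 1$ together with the requisite finiteness. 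Note that the finiteness K\"onig actually uses is the finite projective dimension of $\D$ over $\Pi$ (immediate from \eqref{scom}), not finite global dimension of $\Pi$ and $U$; your proposed deduction of the latter from hereditarity of $A$ is, at least for $U$, not obvious and is not needed.

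The genuine gap is the central technical step, which you only sketch. You assert that the Tor-vanishing $\Tor^U_i(\Pi\ei,\ei\Pi)=0$ for $i>0$ should follow because $\lambdai\ne 0$ makes a certain differential at the infinity vertex invertible, but you give no construction. The corresponding statement in the paper, Lemma~\ref{Lrec1}, is the real content of the theorem and its proof is far from formal: it first uses the explicit relation $\ei=\lambdai^{-1}\sum_i\hw_i\cdot e\cdot\hv_i$ (this is the precise place where $\lambdai\ne 0$ enters) to obtain the Morita equivalence $\Pi\sim e\Pi e$ (Lemma~\ref{P7}), then works in the $e$-corner, identifying $e\Pi\ei\cong e\Pi e\otimes_{eBe}eB\ei$, constructing an $A$-linear section $\D\I\into e\Pi\ei$, and proving via a filtration argument that $\D\I\otimes_\c\ei\Pi e\to e\Pi\ei\otimes_U\ei\Pi e$ is an isomorphism, before transporting the resulting exact sequence back through the Morita equivalence. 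Something of comparable concreteness is required to close your Tor-vanishing claim; invoking a ``bimodule Koszul-type resolution'' does not by itself deliver it.
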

Originally, the recollement conditions were introduced in \cite{BBD}
to formalize a natural structure on the derived category $ \DC({\mathscr Sh}_X)$
of abelian sheaves arising from the stratification of a topological
space into a closed subspace and its open complement. 
In an algebraic setting similar to ours, these conditions were first studied 
in \cite{CPS}.

The functor $\, \btheta_* \,$ yields a fully
faithful embedding of $\,\DC^b(\Mod\,\D)\,$ into
$\,\DC^b(\Mod\,\Pi)\,$ as a `closed stratum', while the
induction functor $\, \btheta^*:\,\DC^b(\Mod\,\Pi) \to
\DC^b(\Mod\,\D) \,$ is an algebraic substitute for
the restriction of a sheaf to that stratum.
This last functor plays a
key role in our construction: it transforms irreducible
$\Pi^\blambda(B)$-modules (viewed as $0$-complexes in
$\DC^b(\Mod\,\Pi)$) to projective $\D$-modules (located in homological 
degree $-1$), inducing natural maps
$$
\omega_n:\,\overline{\CC}_n(X, \I) \to \gamma^{-1}[\I]\ .
$$ 
The main result of this paper can now be encapsulated in the following
theorem. 
\begin{theorem}[see Theorem~\ref{Tmain}]
\la{ITmain} Let $ X $ be a smooth affine irreducible curve over $\c$.

$(a)$\ For each $\,[\I] \in \Pic(X)\,$, amalgamating the maps 
$\, \omega_n\,$ for all $ n \ge 0 $ yields a bijective correspondence
$$
\omega:\ \bigsqcup_{n \geq 0} \overline{\CC}_n(X,\I)
\stackrel{\sim}{\to} \gamma^{-1}[\I] \ .
$$

$(b)$\ There is a natural action on $\, \overline{\CC}_n(X) $ 
of the Picard group $\, \Pic(\D) \,$ of the category of $\D$-modules, and the maps 
$\,\omega_n:\, \overline{\CC}_n(X) \to \R(\D) \,$  are equivariant under this action
for all $ n \ge 0 $.
\end{theorem}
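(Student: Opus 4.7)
The strategy is to analyze the maps $\omega_n$ explicitly via the recollement of Theorem~\ref{IP1} and then to construct an inverse. For an irreducible $\Pi^\blambda(B)$-representation $\bM$ representing a point $p \in \CC_n(X,\I)$, the complex $\btheta^*(\bM) \in \DC^b(\Mod\,\D)$ is concentrated in cohomological degree $-1$, and its unique nonzero cohomology $M := H^{-1}\btheta^*(\bM)$ is a rank-$1$ projective $\D$-module with $c_1(M) = [\I]$; hence $\omega_n(p) = [M]$ lies in $\gamma^{-1}[\I]$. Well-definedness on $\Aut_X(\I)$-orbits is automatic, since by functoriality of the $\Pi^\blambda$-construction any $\phi \in \Aut_X(\I)$ induces an isomorphism of the corresponding representations, and therefore of their images under $\btheta^*$.

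For surjectivity, given $[M] \in \gamma^{-1}[\I]$, I would construct an irreducible $\Pi^\blambda(B)$-representation $\bM$ with $\omega_n([\bM]) = [M]$ as follows. The triangular-matrix structure $B = A[\I]$ presents a $\Pi^\blambda(B)$-module as a pair $(\bM_0, V)$ with $\bM_0$ an $A$-module at the ``generic'' vertex and $V$ a $\c$-vector space at the distinguished vertex $\infty$, tied together by linear maps subject to the deformed preprojective relation defining $\Pi^\blambda$. Taking $\bM_0 = M$ with its underlying $A$-module structure and $V$ a finite-dimensional space of homological nature attached to $M$ and $\I$ (playing the role of $M/\I\cdot M$ in the Weyl algebra case of \cite{BW1,BW2}), with structure maps dictated by the $\Pi^\blambda$-relation, yields a representation whose image under $\btheta^*$, via the counit identity $\btheta^*\btheta_* \simeq \id$ of the recollement, recovers $M[-1]$. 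The dimension $n = \dim_\c V$ is an intrinsic invariant of $[M]$ and selects the component of $\bigsqcup_n \overline{\CC}_n(X,\I)$.

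The principal obstacle is verifying that the constructed $\bM$ is \emph{irreducible}: a proper sub-representation would, under $\btheta^*$, produce either a proper $\D$-submodule of $M$ (excluded by rank-$1$ projectivity over the hereditary Noetherian domain $\D$) or a nontrivial stable sub-datum at $\infty$, which must be ruled out using the non-degeneracy of the pairing entering the $\Pi^\blambda$-relation. Injectivity of $\omega$ then follows from the recollement: an isomorphism $\btheta^*(\bM) \simeq \btheta^*(\bM')$ forces $\bM$ and $\bM'$ to differ only in their image under $j^*$ in $\DC^b(\Mod\,U^\blambda)$, and for irreducibles of dimension vector $(1,n)$ that residual datum is pinned down up to the action of $\Aut_X(\I)$ --- precisely the equivalence relation defining $\overline{\CC}_n(X,\I)$.

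For part $(b)$, I would define the $\Pic(\D)$-action on $\overline{\CC}_n(X) = \bigsqcup_{[\I]} \overline{\CC}_n(X,\I)$ by lifting the tensor equivalence $-\otimes_\D \L: \Mod\,\D \to \Mod\,\D$, for $[\L] \in \Pic(\D)$, along $\btheta$ to an auto-equivalence of $\Mod\,\Pi^\blambda(B)$; this lifted action preserves irreducibility and dimension vectors while shifting the line-bundle label according to the natural map $\Pic(\D) \to \Pic(X)$. Equivariance of $\omega_n$ is then essentially by construction: $\btheta^*$ is compatible with tensoring by $\D$-bimodules, so the $\Pic(\D)$-actions on $\bigsqcup_n \overline{\CC}_n(X)$ and on $\R(\D)$ intertwine under $\omega$.
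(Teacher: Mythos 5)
The overall framework you invoke --- using $L_1\btheta^*$ as the map to $\R(\D)$, attempting to invert by attaching finite-dimensional data to an ideal, and using the Picard group of invertible $\D$-bimodules for part $(b)$ --- is the right one, and it matches the paper's approach in broad outline. However, the central constructions in your plan are not just unfinished but structurally wrong, so the gaps are genuine rather than cosmetic.

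The most serious problem is in your surjectivity argument. You propose to take the $\Pi^\blambda(B)$-module to be the pair $(\bM_0,V)=(M,V)$ with $M$ at the vertex $0$ and a small space $V$ at $\infty$, and to recover $M[-1]$ via the counit $\btheta^*\btheta_*\simeq\id$. But points of $\CC_n(X,\I)$ are \emph{finite-dimensional} $\Pi$-modules of dimension vector $(n,1)$; putting the infinite-dimensional $\D$-module $M$ at vertex $0$ does not produce such a point. The actual relation between $[M]$ and the corresponding point $[\bV]\in\CC_n(X,\I)$ is that $M$ sits inside a larger $\Pi$-module $\bL$ with finite-dimensional quotient $\bV=\bL/M$, and then $M\cong L_1\btheta^*(\bV)=\Tor^{\Pi}_1(\D,\bV)$. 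Constructing $\bL$ from $M$ is the hard part: one has to pass through the Rees algebra $\DD$, the quotient category $\Qgr(\DD)$ and the torsion functors $\tau_k$ to produce a canonical $B$-module $\bL=L\oplus\c$ with $L/M$ finite-dimensional of dimension $n=\dim_\c[\DO\I_M/\MO]$, and then to prove that the $B$-module structure on $\bL$ lifts \emph{uniquely} to a $\Pi^\blambda(B)$-structure compatible with the $\Pi$-structure on $M$ (the vanishing of an obstruction class in $\HH_0(B,\Hom(\bV,\bL))$ and of $\HH_1(B,\Hom(\bV,\bL))$). Your phrase ``$V$ a finite-dimensional space of homological nature\ldots with structure maps dictated by the $\Pi^\blambda$-relation'' gestures at this but gives no mechanism, and your appeal to $\btheta^*\btheta_*\simeq\id$ is off-target since $\btheta_*(M)$ is not a point of any $\CC_n$.

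Your injectivity argument also misses the essential ingredient. Knowing from the recollement that $\btheta^*$ has kernel generated by the ``open'' part tells you two preimages differ by something supported at $\infty$, but it does not tell you that the difference is exactly a twist by $\sigma_\omega$ for $\omega=u^{-1}du$ with $u\in\Lambda=A^\times/\c^\times$, which is what the statement actually requires (this is why $\overline{\CC}_n$ is the quotient by $\Lambda$, not by a generic ``$\Aut_X(\I)$-like'' symmetry). The paper pins this down by showing any isomorphism $M\cong M'$ lifts to an isomorphism $\bL^{\sigma_\omega}\cong\bL'$ (using uniqueness of good filtrations, Lemma~\ref{L6}) and then invoking the uniqueness of the $\Pi$-extension (Proposition~\ref{piext}) to conclude $\bV^{\sigma_\omega}\cong\bV'$. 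You would need to supply that chain. Finally, for part $(b)$, ``lift the tensor equivalence along $\btheta$'' is the right slogan, but the lift $\bvp$ of the isomorphism $\psi:\D\to\End_\D(\F\D)$ to the deformed preprojective level is not automatic: it requires the vanishing $\HH_1(B,\Ker\btheta)=0$ for uniqueness, and the resulting isomorphism $f_\P$ of $\CC_n$'s depends on the chosen representative $\P$, descending to a well-defined map only after quotienting by $\Lambda$. Your proposal treats these as routine; they are where most of the work lies.
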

Part $(a)$ of Theorem~\ref{ITmain} gives a geometric description of the fibration
$\gamma $ over a given $ [\I] \in \Pic(X) $. In the special case
when $ X $ is the affine line, $\, \Pic(X) $ is trivial: there is
only one fibre, and it is shown in \cite{BCE} that $\, \omega \,$
agrees with the Calogero-Moser map constructed in \cite{BW1, BW2}.

Part $(b)$ generalizes another aspect of the Calogero-Moser correspondence 
for the Weyl algebra: the equivariance of the Calogero-Moser map
under the action of the automorphism group\footnote{By a theorem of Stafford \cite{St}, 
the group $ \Aut_\c(A_1)$ is known to be isomorphic to $ \Pic(A_1) $.} $ \Aut_\c(A_1)\,$.
The importance of this result is that it allows one to classify the 
algebras Morita equivalent to $ \D $ up to isomorphism. Precisely,
Theorem~\ref{ITmain}$(b)$ implies that the isomorphism classes of domains 
$ \D' $ Morita equivalent to $ \D $ are in one-to-one correspondence with the
orbits of $ \Pic(\D) $ on the Calogero-Moser spaces 
$\,\overline{\CC}_n(X) \,$. For example, for 
$\, n = 0 \,$, we have $\,\overline{\CC}_0(X) = \Pic(X) \,$, and the
action of $ \Pic(\D) $ is transitive on $ \Pic(X) $ (see Proposition~\ref{piceq} below);
this implies a theorem of Cannings and Holland (\cite{CH1}, Theorem~1.10)
that $ \D' \cong \D $ if and only if $\, \D' \cong \End_{\D}(\I \D)\,$
for some line bundle $ \I $. For an arbitrary $ n > 0 $, the structure
of orbits of $ \Pic(\D) $ in $ \overline{\CC}_n(X) $  is complicated; however, one can 
still define a complete set of isomorphism invariants for the algebras $ \D' $ in terms of 
Hochschild homology of $ \Pi^\lambda(B) $. We will discuss this construction elsewhere.

We should now explain how our results relate to earlier work.

The problem of classifying ideals of $ \D(X) $ for a smooth affine curve $X$ 
was first addressed by Cannings and Holland (see \cite{CH, CH1}) who identified 
the space $ \R(\D) $ with a certain infinite-dimensional Grassmannian. 
In the special case when $X = \A^1$, this Grassmannian was introduced 
independently (and for a different purpose) by Wilson (see \cite{W1}), who called 
it the {\it adelic Grassmannian} $ \Gr $. Motivated by earlier work on integrable systems 
\cite{AMM, CC, Kr, KKS}, Wilson showed (see \cite{Wi}) that $ \Gr $ can be decomposed into a countable 
union of smooth varieties $\, \CC_n $, which are now called the Calogero-Moser spaces. 
It is important to understand 
that the Calogero-Moser decomposition is entirely different from the obvious 
stratification of $ \Gr $ by 
finite-dimensional Grassmannians considered in \cite{CH1}. Its relevance for the Weyl algebra 
$ \D(\A^1) = A_1(\c) $ became clear in \cite{BW1}, where it was shown that,
under the Cannings-Holland bijection, the spaces $ \CC_n $ correspond to the orbits of the natural 
action of the Dixmier group $ \Aut_\c(A_1) $ on $ \R(A_1) $. A different approach to the problem of
classifying ideals of $ A_1 $, which does not use $ \Gr $, was developed in \cite{BW2}. The main idea 
of \cite{BW2} --- to use noncommutative projective geometry (specifically, 
a noncommutative version of Beilinson's equivalence) --- was inspired by \cite{LeB} and \cite{KKO} 
and was later generalized to many other classes of quantum algebras (see  \cite{BGK1}, \cite{BGK2}, 
\cite{NvdB}, \cite{NS}, \cite{BN1} and references therein). While the present paper was in preparation, 
a new very interesting paper \cite{BN} by Ben-Zvi and Nevins has appeared.
In \cite{BN}, the authors use a noncommutative Beilinson equivalence to classify torsion-free 
$\D$-modules on projective curves. Although this last problem is similar to (in fact, somewhat 
more general than) the one addressed in the present paper, our methods and results are different. 
Apart from describing explicitly the space $ \R(\D)$ of ideals, we also describe the action of the Picard 
group on $ \R(\D)$ and prove the equivariance of the Calogero-Moser correspondence. Comparing our constructions 
to those of \cite{BN} is an interesting problem, which will be hopefully clarified elsewhere. We should also 
mention that the methods of the present paper apply to a more general class of formally smooth
algebras, including the path algebras of quivers. Some of these versions of the Calogero-Moser
correspondence will be a subject of a forthcoming work. Finally, in the 
existing literature, there are (at least) two other definitions of
Calogero-Moser spaces associated to curves. The first one, due to V.~Ginzburg, employs the classical Hamiltonian
reduction (see \cite{FG}, or \cite{BN}, Def.~1.2) and is, in fact, closely related to ours (see Remark in the end 
of Section~\ref{GCMS}). 
The second, due to P.~Etingof (see \cite{E}, Example~2.19), is given in terms of
generalized Cherednik algebras (in the style of \cite{EG}). We will discuss the relation of 
Etingof's definition to ours in \cite{BC1}.

We now proceed with a summary of the contents of the paper.

Section~\ref{prelim} is preliminary: it introduces notation and
reviews the material needed for the rest of the paper. While most
results in this section are known, some are (apparently) new.
In particular, Theorem~\ref{lift} and Proposition~\ref{sm} 
did not appear in the literature in this form and generality. 

In Section~\ref{CM}, after recollections on differential
operators (Section~\ref{Diff}) and $K$-theoretic
classificaion of ideals of $\,\D\,$ (Section~\ref{SCI}), we define
the Calogero-Moser spaces $ \CC_n(X,\I) $ and establish their basic properties, 
including Theorem~\ref{IT6}.

The main results of the paper are gathered in Section~\ref{CMMap}. 
First, in Section~\ref{Re}, we explain the relation between the algebras 
$ \Pi^\blambda(B) $ and $ \D $, including Theorem~\ref{IP1}. Then, in 
Sections~\ref{ad} and~\ref{TMT}, we describe the action of the Picard 
group $ \Pic(\D) $ on the Calogero-Moser spaces $ \overline{\CC}_n(X) $ 
and state our main Theorem~\ref{ITmain}. 

The proof of Theorem~\ref{ITmain} occupies the whole of
Section~\ref{proof}. We refer the reader to the introduction of
that section for a summary of the proof.

In Section~\ref{explicit}, we give an alternative description of
the map $ \omega $ and consider a number of explicit examples.  
Perhaps, the most interesting example is that of a general plane curve (see
Section~\ref{genpc}). In this case, the varieties $ \CC_n(X, \I) $ 
can be described in terms of matrices, generalizing the classical 
Calogero-Moser matrices, and the map $ \omega $ is given by an 
explicit formula involving characteristic polynomials of these matrices (see \ref{genpc},
\eqref{kappz}). This last formula can be viewed as a generalization of
Wilson's formula for the rational Baker function of the KP integrable
hierarchy (see \cite{Wi}); however, our method of derivation is different from 
that of \cite{Wi}: it extends our earlier calculations in \cite{BC} in the case 
of the Weyl algebra.

The last section of the paper is an appendix written by G.~Wilson.
It clarifies the relation between deformed preprojective algebras
and rings of differential operators on curves, which, strictly
speaking, we did not use in this paper but probably should have.
As explained above, our map $\, \omega \,$ is
naturally induced by the algebra extension
$\,\btheta:\,\Pi^\blambda(B) \to \D \,$. Unfortunately, this
extension is not entirely canonical: it depends on the choice of
an identification of $\, \Pi^1(A) \,$ with $ \D = \D(X)$.
By a theorem of Crawley-Boevey (see \cite{CB}, Theorem~4.7), $\,
\Pi^1(A) \,$ is indeed isomorphic to $ \D$ as a filtered algebra,
but, in general, there seems to be no natural isomorphism between
these algebras. To remedy this problem, one should replace $ \D $
by the ring $ \D(\Omega_X^{1\!/ \!2})$ of {\it twisted}
differential operators on half-forms on $X$. As
was first observed by V.~Ginzburg (see \cite{G}, Sect.~13.4),
$\, \Pi^1(A) $ is {\it canonically} isomorphic to
$ \D(\Omega_X^{1\!/ \!2})\,$; however, the construction of the
isomorphism depends on a fact (Proposition~\ref{main} below) whose
proof in \cite{G} is very sketchy.  A complete proof can be
found in the Appendix, which may be read independently of the
rest of the paper.

\subsection*{Acknowledgement}
We would like to thank W.~Crawley-Boevey, P.~Etingof, V.~Ginzburg,
I.~Gordon, G.~Muller, R.~Rouquier, G.~Segal for interesting
discussions and comments. We are especially indebted to G.~Wilson:
without his ideas, questions and encouraging criticism this paper
would have probably never appeared. We also thank G.~Wilson for
kindly providing the appendix.

This research was supported in part by the NSF grant DMS 0901570.

\section*{Notation and Conventions}
Throughout this paper, we work over the base field $\c$. Unless
otherwise specified, an algebra means an associative algebra over
$ \c $, a module over an algebra $A$ means a {\it left} module
over $A$, and $ \Mod(A)$ denotes the category of such modules. All
bimodules over algebras are assumed to be symmetric over $ \c $,
and we use the abbreviation $\, \otimes\, $ for $\, \otimes_\c \,$
whenever it is convenient.

\section{Preliminaries}
\la{prelim}
\subsection{Deformed preprojective algebras}
\la{DPA} If $ A $ is an algebra, its tensor square $ \AA $ has two
commuting bimodule structures: one is defined by $\, a.(x \otimes
y).b = ax \otimes yb \,$ and the other by $\, a.(x \otimes y).b =
xb \otimes ay\,$, where $ a, b \in A$. We will refer to these
structures as {\it outer} and {\it inner}, respectively. Any
bimodule over $A$ can be viewed as either left or right module
over the enveloping algebra $ \eA := A \otimes \oA $;
if we interpret the outer bimodule structure on $ \AA $ as a
left $\eA$-module structure and the inner as a right one, then the
canonical map $\, \AA \to \eA \,$ is an isomorphism of $\eA$-bimodules.
We will
often use this isomorphism to identify $\, \AA \cong \eA \,$.

Following \cite{CEG}, we let $\, \DDer(A) := \Der(A, \AA) \,$ denote
the space of linear derivations $\, A \to \AA
\,$ taken with respect to the outer bimodule structure on $ \AA $.
This space is a bimodule with respect to the inner structure, so we can
form the tensor algebra $\, T_A \DDer(A)
\,$. Now, in $\DDer(A)$, there is a canonical derivation $\, \Delta = \Delta_A\,$,
sending  $ x \in A $ to $ (x \otimes 1 - 1 \otimes x) \in \AA $. For any $\,
\lambda \in A \,$, we can consider then the 2-sided ideal
$\,\langle \Delta - \lambda \rangle \,$ in $\,T_A \DDer(A)\,$
and define $\, \Pi^{\lambda}(A) := T_A \DDer(A)/\langle \Delta - \lambda \rangle \,$.
It turns out that, up to isomorphism, the algebra
$\,\Pi^\lambda(A)\,$ depends
only on the class of $ \lambda $ in the Hochschild homology  $\,
\HH_0(A) := A/[A,A]\,$ (see \cite{CB}, Lemma~1.2).
Moreover, instead of elements of $ \HH_0(A) $, it is
convenient to parametrize $ \Pi^\lambda(A) $ by the elements
of $\, \c \otimes_{\Z} K_0(A) \,$, relating this last vector space
to $ \HH_0(A) $ via a Chern character map. To be
precise, let $\, \Tr_A: K_0(A) \to \HH_{0}(A) \,$ be the
map, sending the class of a projective module $ P $ to the class
of the trace of any idempotent $\, e \in \Mat(n, A)\,$,
satisfying $\, P \cong A^{n} e $. By additivity, this extends to a
linear map $\, \c \otimes_{\Z} K_0(A) \to \HH_0(A) \,$
to be denoted also $ \Tr_A $. Following \cite{CB}, we call the
elements of $\, \c \otimes_{\Z} K_0(A) \,$ {\it weights} and
define the {\it deformed preprojective algebra of weight} $\,
\blambda \in \c \otimes_{\Z} K_0(A)\,$ by
\begin{equation}
\la{E20} \Pi^{\blambda}(A) := T_A \DDer(A)/\langle \Delta - \lambda \rangle \ ,
\end{equation}
where $\, \lambda \in A \,$ is any lifting of
$\,\Tr_A(\blambda)\,$ to $A$. Note, if $ A $ is commutative, then
$\, \HH_0(A) = A \,$, and $\, \lambda \,$ is uniquely
determined by $\,\Tr_A(\blambda)\,$.

The algebras $ \Pi^{\blambda}(A) $ are usually ill-behaved
unless one imposes some `smoothness' conditions on $A$.
In this paper, following \cite{KR}, we say that an algebra $ A $ is
{\it smooth} if it is
quasi-free and finitely generated: technically, this implies that
$ \Omega^1 A $
--- the kernel of the multiplication map of $ A $ --- is a
f. g. projective bimodule.
For basic properties and examples of the algebras $
\Pi^{\blambda}(A) $ the reader is referred to \cite{CB}. Here, we
state only one important theorem from \cite{CB}, which will play a
role in our construction. We recall that a ring homomorphism $
i:\, B \to A $ is called {\it pseudo-flat} if
$\,\Tor^B_1(A,\,A) = 0\,$. We also recall that any
ring homomorphism $ i:\, B \to A $ induces a homomorphism of
abelian groups $\, i^*: K_0(B) \to K_0(A)\,$, which extends
(by linearity) to a map of $\c$-vector spaces $\, i^*:\,\c
\otimes_{\Z} K_0(B) \to \c \otimes_{\Z} K_0(A)\,$.
\begin{theorem}[\cite{CB}, Theorem~9.3 and Corollary~9.4]
\la{TCB1} Let $\, i: B \to A \,$ be a pseudo-flat ring
epimorphism. Then, for any $\, \blambda \in \c \otimes_{\Z} K_0(B)
\,$, there is a canonical algebra map $\, \btheta:
\Pi^{\blambda}(B) \to \Pi^{\blambda'}(A)\,$, where $\, \blambda' =
i^{*}(\blambda)\,$. If $ B $ is smooth, then $ \btheta $ is
also a pseudo-flat epimorphism, and the diagram
\begin{equation}
\la{DD5}
\begin{diagram}[small, tight]
B                 &  \rTo^{}          &  A \\
\dTo^{}           &                         & \dTo_{} \\
\Pi^{\blambda}(B) & \rTo^{}          & \Pi^{\blambda'}(A) \\
\end{diagram}
\end{equation}
is a push-out in the category of rings.
\end{theorem}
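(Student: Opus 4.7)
My plan is to first construct $\btheta$ in full generality (without assuming $B$ is smooth) and then establish the push-out and pseudo-flat epimorphism properties under the smoothness hypothesis. The starting point for the construction is the fundamental exact sequence of $A$-bimodules $A \otimes_B \Omega^1 B \otimes_B A \to \Omega^1 A \to \Omega^1_{A/B} \to 0$. Since $i$ is a ring epimorphism, $\Omega^1_{A/B} = 0$, and the pseudo-flatness hypothesis $\Tor_1^B(A,A)=0$ upgrades this to an isomorphism $\Omega^1 A \cong A \otimes_B \Omega^1 B \otimes_B A$ (comparing bar-resolutions of $\Omega^1$ over $B$ and $A$). Dualising by $\Hom_{\eA}(-,\AA)$ and using the standard tensor-hom adjunction then yields a $B$-bimodule map $\DDer(B) \to \DDer(A)$ (with respect to both the inner and outer structures). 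Together with $i$, this extends to a map of tensor algebras $T_B\DDer(B) \to T_A\DDer(A)$, which a direct computation shows sends $\Delta_B$ to $\Delta_A$. Choosing any lift $\lambda \in B$ of $\Tr_B(\blambda)$, the element $\lambda' := i(\lambda) \in A$ is a lift of $\Tr_A(\blambda')$, and the relation $\langle \Delta_B - \lambda \rangle$ maps into $\langle \Delta_A - \lambda' \rangle$, so one obtains a well-defined $\btheta: \Pi^{\blambda}(B) \to \Pi^{\blambda'}(A)$.

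For the push-out statement when $B$ is smooth, I would verify the universal property directly. Given a test algebra $C$ and compatible maps $f: \Pi^{\blambda}(B) \to C$, $g: A \to C$, the restriction of $f$ to $\DDer(B)$ is a $B$-bimodule map to $C$ (with $B$-bimodule structure via $g \circ i$). Since $B$ is smooth, $\Omega^1 B$ is a f.g. projective bimodule, so the previous isomorphism combined with the adjunction produces a base-change identification $\DDer(A) \cong \DDer(B) \otimes_{\eB} \eA$ in the inner structure. Base-changing $f|_{\DDer(B)}$ along $g$ thus yields a unique $A$-bimodule map $\DDer(A) \to C$ which, together with $g$, defines a ring map $\tilde f: T_A\DDer(A) \to C$. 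The moment-map relation $\Delta_A \mapsto \lambda'$ in $C$ then follows from $\Delta_A|_B = \Delta_B \mapsto \lambda$ together with uniqueness of extensions (a consequence of $\Omega^1_{A/B}=0$), so $\tilde f$ descends to $\Pi^{\blambda'}(A) \to C$. Uniqueness is automatic, since $A$ and $\DDer(A)$ generate $\Pi^{\blambda'}(A)$.

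The step I expect to be the hardest is pseudo-flatness, $\Tor_1^{\Pi^{\blambda}(B)}(\Pi^{\blambda'}(A),\Pi^{\blambda'}(A)) = 0$. My plan is to factor the push-out into two stages: first form $R := T_B\DDer(B) \otimes_B A$, which coincides with $T_A(A \otimes_B \DDer(B) \otimes_B A) \cong T_A\DDer(A)$ by the base-change identification above; then impose the relation $\Delta_A = \lambda'$. Smoothness of $B$ makes $T_B\DDer(B)$ a tensor algebra of a projective $B$-bimodule, hence flat over $B$, so the pseudo-flatness of $B \to A$ transfers through the first stage by a routine spectral-sequence argument. For the second stage one needs to know that the symbol of $\Delta - \lambda$ is a regular element (in a suitable sense), which is the technical heart of Crawley-Boevey's original argument and where I would expect the main subtleties to arise, particularly in keeping track of the filtration by tensor degree. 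The epimorphism property $\Pi^{\blambda'}(A) \otimes_{\Pi^{\blambda}(B)} \Pi^{\blambda'}(A) = \Pi^{\blambda'}(A)$ and the push-out property then follow formally from the universal property verified above together with the epimorphism property $A \otimes_B A = A$.
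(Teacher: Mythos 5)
The paper does not prove this theorem; it imports it verbatim from \cite{CB} (Theorem~9.3 and Corollary~9.4), so there is no ``paper's own proof'' to compare against -- the comparison is implicitly with Crawley-Boevey's original argument. Your reconstruction of the first two parts is essentially sound and follows the route in \cite{CB}: the identification $\Omega^1 A \cong A \otimes_B \Omega^1 B \otimes_B A$ for a pseudoflat epimorphism, the resulting inner-bimodule map $\DDer(B) \to \DDer(A)$ sending $\Delta_B \mapsto \Delta_A$ (obtained by dualising and applying $\Hom_{\eB}(\Omega^1 B, -)$ to $\eB \to \eA$), and the pushout verification using the f.~g.\ projectivity of $\Omega^1 B$ to realize $\DDer(A) \cong \DDer(B) \otimes_{\eB} \eA$ in the inner structure. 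One small slip: the parenthetical that uniqueness of the extension to $\DDer(A)$ is ``a consequence of $\Omega^1_{A/B}=0$'' conflates two things -- the surjectivity $\Omega^1_{A/B}=0$ gives you that the comparison map to $\Omega^1 A$ is onto, but the uniqueness of the extension to $\DDer(A)$ comes purely from the tensor--Hom adjunction for the base change $\otimes_{\eB}\eA$, which already requires the smoothness hypothesis you invoke.

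The genuine gap is in the pseudoflatness of $\btheta$, which you correctly identify as the hardest part but leave as a gesture. Two concrete problems with the sketch: first, the notation $T_B\DDer(B) \otimes_B A$ is ambiguous and if read as a module tensor product it is \emph{not} isomorphic to $T_A\DDer(A)$ -- the graded pieces $A \otimes_B \DDer(B)^{\otimes_B n} \otimes_B A$ and $\DDer(A)^{\otimes_A n}$ differ by interposing copies of $A$ between successive tensor factors, and reconciling them requires both the epimorphism property and the Tor-vanishing degree by degree (this is exactly where the pseudoflatness hypothesis must be used, not a formal identity). What you presumably mean is the ring pushout $T_B\DDer(B) \star_B A$, whose identification with $T_A\DDer(A)$ does hold, by the universal-property argument you already gave. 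Second, the claim that pseudoflatness ``transfers through the first stage by a routine spectral-sequence argument'' is not routine: flatness of $T_B\DDer(B)$ as a one-sided $B$-module does not immediately give that $T_B\DDer(B) \to T_A\DDer(A)$ is a pseudoflat epimorphism -- this is the content of a nontrivial theorem in \cite{CB} (his Theorem~0.7), whose proof goes via a filtration by tensor degree and a careful degree-wise $\Tor$ computation. And third, ``the symbol of $\Delta - \lambda$ is a regular element'' is not quite the right formulation of the remaining lemma; in Crawley-Boevey's treatment the key point is a compatibility between the ideals $\langle\Delta_B - \lambda\rangle$ and $\langle\Delta_A - \lambda'\rangle$ under the pushout, together with a $\Tor$-vanishing statement for the quotient map $T_B\DDer(B) \onto \Pi^\blambda(B)$ (which does use that $\Delta - \lambda$ generates its two-sided ideal in a well-behaved way, but the argument is a filtration computation rather than a ``regular element'' assertion). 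So the high-level plan is aligned with \cite{CB}, but the hard half of the theorem would need substantial additional work to complete.
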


We now prove a few general results on representations of deformed
preprojective algebras, which may be of independent interest. Our
first lemma is probably well known to the experts (see, for
example, \cite{CEG}); we record it to fix the notation.


%
\begin{lemma}
\la{Lcomm} If $A$ is smooth, then $ \Delta $ lies
in the commutator space $\, [A,\,\DDer(A)] \,$ of the
bimodule $ \DDer(A) $.
\end{lemma}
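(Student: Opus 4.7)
The plan is to realize $[A, \DDer(A)]$ as the kernel of a canonical surjection $\DDer(A) \twoheadrightarrow \Der(A, A)$ and then check that $\Delta$ maps to zero under it.

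I would begin from the universal property of $\Omega^1 A$, which identifies $\DDer(A) = \Der(A, \AA)$ with $\Hom_{\eA}(\Omega^1 A, \AA)$, the space of $\eA$-module maps taken for the outer bimodule structures. The commuting inner structure on $\AA$ then transports to the inner bimodule structure on $\DDer(A)$. For any bimodule $M$, the commutator quotient $M/[A, M]$ equals the zeroth Hochschild homology $A \otimes_{\eA} M$, so
\begin{equation*}
\DDer(A)/[A, \DDer(A)] \;=\; A \otimes_{\eA} \Hom_{\eA}(\Omega^1 A, \AA),
\end{equation*}
where the tensor product is formed using the inner $\eA$-action on $\DDer(A)$.

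Here the smoothness hypothesis enters in its crucial form: $\Omega^1 A$ is finitely generated projective as an $\eA$-module for the outer structure. This allows one to commute the tensor product past the $\Hom$, yielding $\Hom_{\eA}(\Omega^1 A,\, A \otimes_{\eA} \AA)$, the tensor product inside now being taken with respect to the inner $\eA$-action on $\AA$. A direct calculation identifies $A \otimes_{\eA} \AA$ with $A$ as an outer bimodule via $c \otimes (u \otimes v) \mapsto ucv$, so the right-hand side becomes $\Hom_{\eA}(\Omega^1 A, A) = \Der(A, A)$. Unwinding the identifications, the composite projection $\DDer(A) \twoheadrightarrow \Der(A, A)$ sends $\delta \in \DDer(A)$ to the ordinary derivation $m \circ \delta : A \to A$, where $m : \AA \to A$ is the multiplication map.

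Finally, for the canonical double derivation one has $m \circ \Delta(x) = m(x \otimes 1 - 1 \otimes x) = 0$, so $\Delta$ lies in the kernel of this projection, which is exactly $[A, \DDer(A)]$. The only step requiring a genuine input is the commutation of $A \otimes_{\eA}(-)$ with $\Hom_{\eA}(\Omega^1 A, -)$, and this is precisely where the smoothness of $A$ is used; I do not foresee any further obstacle.
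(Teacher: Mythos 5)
Your proof is correct and follows essentially the same route as the paper: both identify $\DDer(A)/[A,\DDer(A)]$ with $\Der(A)$ by exploiting $\DDer(A)\cong\Hom_{\eA}(\Omega^1 A,\AA)$ and the f.g.\ projectivity of $\Omega^1 A$ to commute $A\otimes_{\eA}(-)$ past the $\Hom$, and then observe that the resulting projection is $\delta\mapsto \mu\circ\delta$, which kills $\Delta$.
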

\begin{proof}
Composing the multiplication map $ \mu: \AA \to A $ with
derivations $ A \to \AA $ yields a linear map $ \mu_*: \DDer(A) \to \Der(A) $, with $\,
\Delta \in \Ker(\mu_*)\,$. This map factors through
the natural projection $ \DDer(A) \onto \DDer(A)_{\natural}\,$,
where $ \DDer(A)_{\natural}:= \DDer(A)/[A,\,\DDer(A)] \,$.
If $ A $ is smooth, the induced map
$ \bar{\mu}_*: \DDer(A)_{\natural} \to \Der(A) $
is an isomorphism. Indeed, identifying $ \AA \cong \eA $ and writing
$\,\Omega^1 A
\subseteq \eA \,$ for $\, \Ker(\mu)\,$, we have $\,\Der(A) \cong
\Hom_{\eA}(\Omega^1 A, A) \,$ and $\, \DDer(A) \cong
\Hom_{\eA}(\Omega^1 A, \eA) \,$. Under the last isomorphism, the
bimodule structure on $ \DDer(A) $ corresponds to the
natural right $ \eA$-module structure on $ (\Omega^1 A)^{\star} :=
\Hom_{\eA}(\Omega^1 A, \eA) $ and
$\,\DDer(A)_{\natural} \cong (\Omega^1 A)^\star \otimes_{\eA} A\,$.
The quotient map $ \bar{\mu}_* $ now becomes
$\, (\Omega^1 A)^\star \otimes_{\eA} A \, \to \,
\Hom_{\eA}(\Omega^1 A, A) \,$.
Since $ A $ is smooth, $ \Omega^1 A $ is a f.~g.
projective $\eA$-module, so the last map is an
isomorphism. This implies
that $ \Ker(\mu_*) = [A,\,\DDer(A)] $, and hence $\,\Delta \in
[A,\,\DDer(A)]$.
\end{proof}

For any $ \lambda \in A $, the algebra $ \Pi^{\lambda}(A) $ is an
$A$-ring: it is equipped with a canonical algebra homomorphism
$\, A \to \Pi^\lambda(A) \,$. Every representation of
$\Pi^\lambda(A)$ can thus be regarded as a representation of $A$.
Conversely, given a representation of $A$, one can ask whether it
lifts to a representation of $ \Pi^\lambda(A)$. The following
theorem provides a simple homological criterion for the existence
and uniqueness of such liftings.
\begin{theorem}
\la{lift} Let $ A $ be a smooth algebra, and let $\, \varrho: \, A
\to \End(V) \,$ be a representation of $ A $ on a (not necessarily
finite-dimensional) vector space $ V $. Then $ \varrho $ can be
extended to a representation of $ \Pi^{\lambda}(A) $ if and only
if the homology class of $\, \varrho(\lambda) \,$ in $\,
\HH_0(A,\,\End\,V) \,$ is zero, i.e. $\, \varrho(\lambda) \in
[\varrho(A),\,\End(V)]\,$. If it exists, an extension of $\,
\varrho \,$ to $\, \Pi^{\lambda}(A)\, $ is unique if and only if
$\, \HH_1(A,\,\End\,V) = 0 \,$.
\end{theorem}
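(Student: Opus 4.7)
I would first observe that by the universal property of the tensor algebra, algebra maps $T_A\DDer(A) \to \End V$ extending $\varrho$ are in bijection with $A$-bimodule maps $\phi: \DDer(A) \to \End V$, where $\End V$ is viewed as an $A$-bimodule via $\varrho$. Passing to the quotient $\Pi^{\lambda}(A)$ imposes the single further condition $\phi(\Delta) = \varrho(\lambda)$. The theorem therefore reduces to analyzing the ``evaluation at $\Delta$'' map
\[
\mathrm{ev}_{\Delta}\colon \Hom_{\eA}(\DDer(A),\,\End V) \longrightarrow \End V,\qquad \phi \longmapsto \phi(\Delta),
\]
where $\Hom_{\eA}$ denotes bimodule maps: existence of an extension is equivalent to $\varrho(\lambda) \in \im(\mathrm{ev}_{\Delta})$, and uniqueness to $\mathrm{ev}_{\Delta}$ being injective.

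Next I would use smoothness of $A$ to compute $\mathrm{ev}_{\Delta}$ homologically. Because $\Omega^1 A$ is finitely generated projective as an $\eA$-module, the identification $\DDer(A) \cong (\Omega^1 A)^{\ast}$ used in Lemma~\ref{Lcomm} upgrades to a natural isomorphism
\[
\Hom_{\eA}(\DDer(A),\,M)\ \cong\ M \otimes_{\eA} \Omega^1 A
\]
for any bimodule $M$; under this isomorphism $\Delta \in \DDer(A)$ corresponds to the tautological inclusion $\iota\colon \Omega^1 A \hookrightarrow \eA$, so $\mathrm{ev}_{\Delta}$ is precisely the map $M\otimes_{\eA}\Omega^1 A \to M\otimes_{\eA}\eA = M$ induced by $\iota$. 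I would then apply $M\otimes_{\eA}-$ to the short exact sequence $0 \to \Omega^1 A \to \eA \to A \to 0$; since $\eA$ is flat and $\Tor^{\eA}_{\ast}(M,A) = \HH_{\ast}(A,M)$, the long exact sequence collapses to
\[
0 \to \HH_1(A,M) \to M\otimes_{\eA}\Omega^1 A \xrightarrow{\,\mathrm{ev}_{\Delta}\,} M \to \HH_0(A,M) \to 0,
\]
so $\im(\mathrm{ev}_{\Delta}) = \ker(M \to \HH_0(A,M)) = [\varrho(A),\End V]$ and $\ker(\mathrm{ev}_{\Delta}) = \HH_1(A,\End V)$. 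Combined with the first paragraph, this yields both halves of the theorem.

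The main obstacle will be the middle step: the isomorphism $\Hom_{\eA}(\DDer(A),M) \cong M\otimes_{\eA}\Omega^1 A$ and the matching of $\mathrm{ev}_{\Delta}$ with the map induced by $\iota$ depend crucially on the finite-generation and projectivity of $\Omega^1 A$, and require careful bookkeeping of the left/right $\eA$-conventions on $\DDer(A) = (\Omega^1 A)^{\ast}$. Lemma~\ref{Lcomm} already yields the easy ``only if'' half of existence, since $\Delta \in [A,\DDer(A)]$ forces $\phi(\Delta) \in [\varrho(A),\End V]$ for every bimodule map $\phi$; the surjectivity onto $[\varrho(A),\End V]$ and the $\HH_1$-criterion for uniqueness both genuinely require the full homological identification.
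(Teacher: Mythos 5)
Your argument is correct and follows essentially the same route as the paper: reduce via the universal property of the tensor algebra to the evaluation-at-$\Delta$ map on $\Hom_{\eA}(\DDer(A),\End V)$, use smoothness of $A$ (equivalently, $\Omega^1 A$ f.~g.~projective over $\eA$) to identify this Hom with $\End(V)\otimes_{\eA}\Omega^1 A$, and then read off image and kernel from the exact sequence obtained by tensoring $0\to\Omega^1 A\to\eA\to A\to 0$ with $\End(V)$. The only cosmetic difference is that you phrase the key identification directly via finite-generation and projectivity of $\Omega^1 A$, whereas the paper routes it through the biduality isomorphism $\Omega^1 A\xrightarrow{\sim}(\Omega^1 A)^{\star\star}$; these are the same fact.
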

\begin{proof}
We will use the notation of
Lemma~\ref{Lcomm}. Thus, for a fixed $ \lambda \in A $, we
identify $\, \Pi^{\lambda}(A) = T_A (\Omega^1 A)^\star/\langle
\Delta_A - \lambda \rangle \,$, with $ \Delta_A \in (\Omega^1
A)^\star $ corresponding to the natural inclusion $ \Omega^1 A
\into \eA $. A representation $\, \varrho: \, A \to \End(V) \,$
can be extended then to a representation of $ \Pi^\lambda(A) $ if
and only if there is an $A$-ring map $\, \tvarrho:\, T_A(\Omega^1
A)^\star \to \End(V) \,$, such that $ \tvarrho(\Delta_A) =
\varrho(\lambda) $. By the universal property of tensor algebras,
such a map is uniquely determined by its restriction to $
(\Omega^1 A)^\star $. Thus, regarding $ \End(V) $ as a bimodule
over $ A $ via $\, \varrho \,$, we conclude that $\, \varrho \,$
lifts to $ \Pi^\lambda(A) $  iff there is $\, \tvarrho \in
\Hom_{\eA}((\Omega^1 A)^\star ,\, \End\,V)$, mapping $ \Delta_A $
to $ \varrho(\lambda) $. Here, the bimodule $ \End(V) $ is
interpreted as a {\it right} $\eA$-module.

Now, since $A$ is smooth, the canonical map $\, \Omega^1 A \to
(\Omega^1 A)^{\star \star} \,$ is an isomorphism, and we can
identify $\,\Hom_{\eA}((\Omega^1 A)^\star ,\, \End\,V) \cong
\End(V) \otimes_{\eA} \Omega^1 A \,$.
Under this identification, the condition $\, \tvarrho(\Delta_A) =
\varrho(\lambda) \,$ becomes
\begin{equation}
\la{eee}
 \exists\, f_i \otimes d_i \in \End(V)
\otimes_{\eA}\Omega^1 A \ :\ \sum_i f_i \,\Delta_A(d_i) =
\varrho(\lambda) \ .
\end{equation}
Tensoring the exact sequence of $\eA$-modules $\, 0 \to \Omega^1 A
\to \eA \to A \to 0 \,$ with $ \End(V) $, we get
\begin{equation}
\la{exlift} 0 \to \HH_1(A,\, \End\,V) \to \End(V) \otimes_{\eA}
\Omega^1 A \xrightarrow{\partial} \End(V)
\stackrel{p}{\longrightarrow} \HH_0(A,\,\End\,V) \to 0 \,,
\end{equation}
with map in the middle given by $\, \partial:\, f \otimes d
\mapsto f\,\Delta_A(d) \,$, and $\, p \,$ being the canonical
projection. The condition \eqref{eee} now says that
$\,\varrho(\lambda) \in \im(\partial) $, and, by exactness of
\eqref{exlift}, this is equivalent to $\,\varrho(\lambda) \in
\Ker(p)\,$. Thus, $\, \varrho \,$ can be extended to $
\Pi^\lambda(A) $ if and only $ \varrho(\lambda) $ vanishes in $
\HH_0(A,\,\End\,V) $.

The fibre of $ \partial $ over $\,
\varrho(\lambda) \in \End(V) \,$ consists of different
liftings of the given action $ \varrho $ to $ \Pi^\lambda(A) $.
Again, by exactness of \eqref{exlift}, this fibre can be
identified with $\, \HH_1(A,\, \End\,V) \,$. In particular, if $
\varrho $ admits an extension to $ \Pi^\lambda(A) $, this
extension is unique if and only if $\, \HH_1(A,\, \End\,V) = 0\,$.
\end{proof}
As an immediate corollary of Theorem~\ref{lift}, we get
\begin{corollary}
If $\, \blambda \in \c \otimes_{\Z} K_0(A)\,$, then $\, \varrho:
\, A \to \End(V) \,$ can be extended to $\, \Pi^\blambda(A)\,$ if
and only if $\, \varrho_* \, \Tr_A(\blambda) = 0 \,$, where
$\,\varrho_*: \,\HH_0(A) \to \HH_0(A,\, \End\,V) \,$ is
the map induced by $ \varrho $ on Hochschild homology.
\end{corollary}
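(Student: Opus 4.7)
The plan is to show that the corollary is essentially a reformulation of Theorem~\ref{lift} obtained by passing from a representative $\lambda \in A$ of a class in $\HH_0(A)$ to the image in $K$-theory via the Chern character $\Tr_A$.

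First I would unpack the definition~\eqref{E20}: for $\blambda \in \c \otimes_{\Z} K_0(A)$, the algebra $\Pi^{\blambda}(A)$ equals $\Pi^\lambda(A)$ for any $\lambda \in A$ lifting the class $\Tr_A(\blambda) \in \HH_0(A)$, and by \cite{CB}, Lemma~1.2, this algebra depends only on the Hochschild class of $\lambda$. Applying Theorem~\ref{lift} directly, $\varrho$ extends to $\Pi^{\blambda}(A)$ if and only if $\varrho(\lambda) \in [\varrho(A),\,\End(V)]$, that is, the class of $\varrho(\lambda)$ vanishes in $\HH_0(A,\,\End V)$.

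Next I would identify this condition with the vanishing of $\varrho_*\,\Tr_A(\blambda)$. Recall that $\End(V)$ acquires an $A$-bimodule structure via $a\cdot f \cdot b := \varrho(a)\,f\,\varrho(b)$, and in this setup $\varrho:A \to \End(V)$ is a bimodule homomorphism. Consequently it induces a linear map $\varrho_*:\HH_0(A) \to \HH_0(A,\End V)$ on the quotients by commutator subspaces, and by construction $\varrho_*[\lambda] = [\varrho(\lambda)]$ for any lift $\lambda \in A$ of a given class in $\HH_0(A)$. Taking that lift to represent $\Tr_A(\blambda)$ yields $\varrho_*\,\Tr_A(\blambda) = [\varrho(\lambda)]$ in $\HH_0(A,\End V)$, so the two vanishing conditions coincide.

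Finally, I would note that the formulation in the corollary is well-posed: although $\lambda$ is only determined up to commutators, the class $[\varrho(\lambda)] \in \HH_0(A,\End V)$ depends only on $[\lambda] \in \HH_0(A)$ because $\varrho$ sends commutators to commutators, which is precisely the well-definedness of the induced map $\varrho_*$ on Hochschild homology. There is no real obstacle here: the content of the corollary is entirely in Theorem~\ref{lift}, and the work reduces to tracking that the Chern character $\Tr_A$ and the functoriality of $\HH_0(-)$ in its coefficient bimodule intertwine correctly. In particular, no smoothness hypothesis beyond the one already present in Theorem~\ref{lift} is required.
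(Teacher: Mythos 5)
Your proposal is correct and is exactly the unpacking that the paper implicitly intends by calling this an ``immediate corollary'': identify $\Pi^{\blambda}(A)$ with $\Pi^\lambda(A)$ for any lift $\lambda$ of $\Tr_A(\blambda)$, apply Theorem~\ref{lift}, and observe that $\varrho$ sends commutators to commutators so that $\varrho_*[\lambda]=[\varrho(\lambda)]$ in $\HH_0(A,\End V)$ is well-defined. No gaps.
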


We now apply Theorem~\ref{lift} to finite-dimensional
representations. The next result is a generalization of
\cite{CB2}, Theorem~3.3, which deals with path algebras of
quivers.
\begin{proposition}
\la{liftfd} Let $A$ be a smooth algebra, and let $ \varrho: \, A
\to \End(V) \,$ be a representation of $A$ on a finite-dimensional
vector space $V$. Then $\, \varrho \,$ lifts to a representation
of $ \Pi^\lambda(A) $ if and only if the trace of
$\,\varrho(\lambda)\,$ on any $A$-module direct summand of $V$ is
zero. Moreover, if $\,\varrho \in \Rep(A, V) \, $ lifts, then the
fibre $ \,\pi^{-1}(\varrho) \,$ of the canonical map $\, \pi:\,
\Rep(\Pi^\lambda(A), V) \to \Rep(A, V) \,$  is isomorphic to $\,
\Ext_A^1(V, V)^* $.
\end{proposition}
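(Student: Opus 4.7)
The plan is to apply Theorem~\ref{lift} to reduce the lifting question to vanishing of $\varrho(\lambda)$ in $H_0(A,\End V)$, then translate this via trace duality into the asserted statement about direct summands, and finally identify $H_1(A,\End V)$ with $\Ext^1_A(V,V)^*$ for the fibre formula.

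By Theorem~\ref{lift}, $\varrho$ lifts to $\Pi^\lambda(A)$ iff $\varrho(\lambda)$ vanishes in $H_0(A,\End V)=\End V/[\varrho(A),\End V]$, and in that case $\pi^{-1}(\varrho)\cong H_1(A,\End V)$. Since $V$ is finite-dimensional, the trace form on $\End V$ is non-degenerate, and the identity $\tr(M[a,T])=\tr([M,a]T)$ for $a\in A$ and $M,T\in\End V$ shows that the orthogonal complement of $[\varrho(A),\End V]$ under this pairing is exactly $R:=\End_A V$. Hence the lifting condition becomes $\tr(\varrho(\lambda)M)=0$ for every $M\in R$. Every $A$-module direct summand $W\subseteq V$ produces an idempotent $e_W\in R$ with $\tr(\varrho(\lambda)e_W)=\tr_W(\varrho(\lambda)|_W)$, and conversely every idempotent of $R$ arises this way, so one direction of the stated equivalence is immediate.

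For the converse, decompose $R$ as $J+\c\langle\text{idempotents of }R\rangle$, where $J=\mathrm{rad}(R)$ is the (nilpotent) Jacobson radical: the semisimple quotient $R/J$ is a product of matrix algebras over $\c$, each $\c$-spanned by idempotents (for instance $E_{ij}=(E_{ii}+E_{ij})-E_{ii}$ for $i\ne j$), and idempotents lift through the nilpotent ideal $J$. It then suffices to show $\tr(\varrho(\lambda)j)=0$ for every $j\in J$. The key observation is that since $j$ is $A$-linear and nilpotent, the iterated images $j^kV$ form a strictly decreasing filtration of $V$ by $A$-submodules; $\varrho(\lambda)$ preserves each $j^{k+1}V$ while $j$ shifts this filtration down by one step, so $\varrho(\lambda)j$ sends $j^kV$ into $j^{k+1}V$ for every $k$. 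In a basis of $V$ adapted to this $A$-stable filtration, $\varrho(\lambda)j$ is block upper-triangular with zero diagonal blocks, and its trace vanishes.

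For the fibre, write $\End_\c V\cong V\otimes_\c V^*$ as $A$-bimodules. A standard change-of-rings argument gives $H_*(A,V\otimes_\c V^*)\cong\Tor^A_*(V^*,V)$; and for finite-dimensional $V$, a projective resolution $P_\bullet\to V$ together with the natural isomorphism $V^*\otimes_A P\cong\Hom_A(P,V)^*$ identifies the Tor-complex with the dual of the Ext-complex, yielding $\Tor^A_*(V^*,V)\cong\Ext^*_A(V,V)^*$. Specialising to $*=1$ gives $\pi^{-1}(\varrho)\cong\Ext^1_A(V,V)^*$. The main obstacle in the argument is the radical step: without the observation that each $j^kV$ is $A$-stable one cannot promote vanishing of $\tr(\varrho(\lambda)e)$ on idempotents of $R$ to vanishing of $\tr(\varrho(\lambda)M)$ on all of $R$.
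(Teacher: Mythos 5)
Your proof is correct, and the overall strategy coincides with the paper's: both invoke Theorem~\ref{lift} to reduce lifting to vanishing of $\varrho(\lambda)$ in $\HH_0(A,\End V)$, translate this via the trace pairing into $\tr_V(M\varrho(\lambda))=0$ for all $M\in\End_A(V)$, and compute the fibre via $\HH_1(A,\End V)\cong\Tor_1^A(V^*,V)\cong\Ext^1_A(V,V)^*$.

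The one genuine point of departure is how the converse is organized. The paper first reduces to $V$ indecomposable (implicitly using that the cross-terms $\Hom_A(V_i,V_j)$, $i\ne j$, pair trivially with the block-diagonal operator $\varrho(\lambda)$) and then applies Fitting's lemma, so that $\End_A(V)$ is local and every element splits as $c\,\id_V+\theta$ with $\theta$ nilpotent. You instead work with $R=\End_A(V)$ for arbitrary $V$, decomposing it as $\mathrm{rad}(R)$ plus the $\c$-span of its idempotents via semisimplicity of $R/\mathrm{rad}(R)$ and idempotent lifting. These are two faces of the same decomposition; your version has the small advantage of making the reduction step explicit rather than asserting, as the paper does, that indecomposable $V$ suffices. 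For the radical piece you could also have noted more directly that $j\in\End_A(V)$ commutes with $\varrho(\lambda)\in\varrho(A)$, so $\varrho(\lambda)j$ is nilpotent outright; your filtration argument is an equally valid and slightly more hands-on way to reach the same trace vanishing.
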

\begin{proof}
The trace pairing on $ \End(V) $
yields a linear isomorphism $\,\End(V) \stackrel{\sim}{\to} \End(V)^* $,
which is a bimodule map with respect to the natural bimodule structures
on $ \End(V) $ and $ \End(V)^*$. This isomorphism restricts to $\,
\End_A(V) \stackrel{\sim}{\to} \HH_0(A,\,\End\,V)^*$, which, upon dualizing
with $ \c $, becomes
\begin{equation}
\la{biso}
\HH_0(A,\,\End\,V) \stackrel{\sim}{\to} \End_A(V)^*\ , \quad
\bar{f} \mapsto [\,e \mapsto \tr_{V}(ef)\,]\ .
\end{equation}

Now, let $ \varrho: A \to \End(V) $ be a representation of $ A $
on $V$ that lifts to $ \Pi^\lambda(A) $, and suppose that $V$ has
a direct $A$-linear summand, say $W$. By Theorem~\ref{lift}, the
class of $ \varrho(\lambda) $ in $ \HH_0(A,\,\End\,V) $ is zero,
and hence so is its image under \eqref{biso}. Taking $\, e \in
\End_A(V)\,$ to be a projection onto $W$, we get $\,
\tr_V[e\varrho(\lambda)] = \tr_W[\varrho(\lambda)] = 0 \,$,
which proves the first implication of the theorem.

For the converse, it suffices to consider only indecomposable
representations $ \varrho: \, A
\to \End(V) $. By Fitting's Lemma, $ \End_A(V) $ is
then a local ring: every $\, e \in \End_A(V) \,$ can be written as
$\, e = c\,\id_V + \btheta \,$, with $ c \in \c $ and $ \btheta $
being nilpotent. Now, if we assume that $\,
\tr_V[\varrho(\lambda)] = 0 \,$, then $\, \tr_{V}[e
\varrho(\lambda)] = 0 \,$ for any $\, e \in \End_A(V)\,$. The
class of $ \varrho(\lambda) $ in $ \HH_0(A,\,\End\,V) $ lies thus
in the kernel of \eqref{biso} and hence is zero. By
Theorem~\ref{lift}, we conclude that $\, \varrho \,$ lifts to a
representation of $\Pi^\lambda (A) $.

For the last statement, note that $\, \pi^{-1}(\varrho)
\cong \HH_1(A,\, \End\,V)\,$ by exactness of \eqref{exlift}.
On the other hand, we have
\begin{equation}
\la{hoch}
\HH_1(A,\, \End\,V) \cong \Tor^{A}_1(V^*,\,V) \cong
\Ext^1_A(V,V)^*\ ,
\end{equation}
which is standard homological algebra (see \cite{CE}, Cor.~4, p.~170,
and Prop.~VI, 5.3, respectively).
\end{proof}
\begin{remark}
In the special case, when $\, A \,$ is the path algebra of a
quiver, Proposition~\ref{liftfd} was proven earlier, by a different method,
in \cite{CB2}. With identifications \eqref{biso} and \eqref{hoch}, our basic
sequence \eqref{exlift} becomes
\begin{equation}
\la{exlift1}
 0 \to \Ext_A^1(V,\,V)^* \to \End(V) \otimes_{\eA} \Omega^1 A
\to \End(V) \to \End_A(V)^* \to 0 \ ,
\end{equation}
which, in the quiver case, agrees with \cite{CB2}, Lemma~3.1.
\end{remark}

\subsection{One-point extensions}
 \la{1-point}
If $ A $ is a unital associative algebra, and $ I $ a left module
over $ A $, we define the {\it one-point extension} of $ A $ by $
I $ to be the ring of triangular matrices
\begin{equation}
\la{E16} A[I] := \left(
\begin{array}{cc}
A & I \\
0 & \c
\end{array}
\right)
\end{equation}
with matrix addition and multiplication induced from the module
structure of $ I $. Clearly, $ A[I] $ is a unital associative
algebra, with identity element being the identity matrix. There
are two distinguished idempotents in $ A[I]\, $: namely
\begin{equation}
\la{idem01} e := \left(
\begin{array}{cc}
 1  & 0 \\
 0  & 0
\end{array}
\right) \quad \mbox{and} \quad \ei := \left(
\begin{array}{cc}
 0 & 0 \\
 0 & 1
\end{array}
\right) \ .
\end{equation}
If $ A $ is indecomposable (e.g., $A$ is a commutative integral
domain), then \eqref{idem01} form a complete set of primitive
orthogonal idempotents in $ A[I] $.

A module over $ A[I] $ can be identified with a triple $\, \bV =
(V,\, \Vi, \, \varphi) \,$, where $ V $ is an $A$-module, $ \Vi $
is a $\c$-vector space and $ \varphi: \, I \otimes \Vi \to V $ is
an $A$-module map. Using the standard matrix notation, we will
write the elements of $ \bV $ as column vectors $ (v, w)^T $ with
$ v \in V $ and $ w \in \Vi $; the action of $ A[I] $ is then
given by
$$
\left(
\begin{array}{cc}
a & b \\
0 & c
\end{array}
\right) \left(
\begin{array}{c}
v \\
w
\end{array} \right)
= \left(
\begin{array}{c}
a.v + \varphi(b \otimes w) \\
c w
\end{array} \right)\ .
$$
Now, if $\,\bU = (U, \,\Ui, \,\varphi_U)\,$ and $\,\bV = (V,
\,\Vi,\, \varphi_V)\,$ are two $A[I]$-modules, a homomorphism $\,
 \bU \to \bV \,$ is determined by a pair of maps
$\, (f, f_\infty)\,$, with $\, f \in \Hom_A(U, V) \,$ and $\,
f_{\infty} \in \Hom_\c(\Ui, \Vi)\,$, making the following diagram
commutative
\begin{equation}
\la{1-hom}
\begin{diagram}[small, tight]
I \otimes  \Ui                  & \rTo^{\varphi_U}     & U \\
\dTo^{\id \otimes f_\infty}     &                      & \dTo_{f} \\
I \otimes \Vi                   & \rTo^{\varphi_V}     & V \\
\end{diagram}
\end{equation}
If $ \bV $ is finite-dimensional, with $\,\dim_{\c} V = n\,$ and
$\,\dim_{\c} \Vi = n_{\infty}\,$, we call $\,\bn = (n,\,
n_{\infty}) \,$ the {\it dimension vector} of $ \bV $.

The next lemma gathers together basic properties of one-point
extensions.
\blemma
\la{LL6}
$(1)\,$ $ A[I] $ is canonically isomorphic to $\,
T_{\tA}(I) \,$, where $ \tA := A \times \c $.

$(2)\,$  If $ A $ is smooth and $ I $ is a f.~g. projective
$A$-module, then $ A[I] $ is smooth.

$(3)\,$ $\, I \mapsto A[I] \,$ is a functor from $ \Mod(A) $ to
the category of associative algebras.

$(4)\,$ The natural projection $\,i:\,A[I] \to A \,$ is a
flat ring epimorphism.

$(5)\,$ There is an isomorphism of abelian groups $ K_0(A[I])
\cong K_0(A) \oplus \Z \,$. \elemma
\begin{proof}

$(1)$ We identify $ \tA $ with the subalgebra of diagonal matrices
in $ A[I] $ and $ I $ with the complementary nilpotent ideal $ \tM
\subset A[I]\,$:
\begin{equation}
\la{EE1}
\tA = \left(
\begin{array}{cc}
A & 0 \\
0 & \c
\end{array}\right)\ , \quad
\tM := \left(
\begin{array}{cc}
0 & I \\
0 & 0
\end{array}\right)\ .
\end{equation}
By the universal property of tensor algebras, the inclusions $\,
\tA \into A[I] \,$ and $\, \tM \into A[I] \,$ can then be extended
to an algebra map $\, \phi: T_{\tA}(\tM) \to A[I] \,$, which is a
required isomorphism.

$ (2) $ By $(1)$ and \cite{CQ}, Prop.~5.3, it suffices to show that
$ \tM $ is a projective $\tA$-bimodule. But if $ I $ is a
projective $A$-module, then it is isomorphic to a direct summand
of a free module $ \, A \otimes V \,$ and  $\, \tM \,$ is
isomorphic to a direct summand of $\, \tA e \otimes V \otimes \ei
\tA \,$. The latter is a projective $\tA$-bimodule, since it is a
direct summand of $\, \tA \otimes V \otimes \tA \,$.

$(3)$ Any $A$-module map $\,f: I_1 \to I_2 \,$ gives rise to an
$\tA$-bimodule map $\, \tf: \tL_1 \to \tM_2 \,$. Identifying $
A[I_1] = T_{\tA}(\tL_1) $ and $ A[I_2] = T_{\tA}(\tL_2) $, we may
extend $\, I \mapsto A[I] \,$ to morphisms by $ A[f] :=
T_{\tA}(\tf) $. As $ T_{\tA} $ is a functor on bimodules, the
result follows.

$(4)$ The map $ i $ is given by
\begin{equation}
\la{E17} i\,:\, A[I] \to A  \ , \quad \left(
\begin{array}{cc}
a & b \\
0 & c
\end{array}
\right) \mapsto a\ .
\end{equation}
It is immediate from \eqref{E17} that $\, A \cong A[I]\,e\,$ as a
left $A[I]$-module via $ i $. Since $e$ is an idempotent, $\,A[I]\,e\,$ is
projective and hence flat.

$(5)$ The diagonal projection $\, \tbtheta:\, A[I] \to \tA \,$ has a
nilpotent kernel (equal to $\, \tM \,$). By \cite{B},
Prop.~IX.1.3, it then induces isomorphisms $\,K_i(A[I])
\cong K_i(\tA)\,$ for all $\, i \,$. In
particular, $\,K_0(A[I]) \cong K_0(A) \oplus \Z \,$.
\end{proof}
We will also need the next lemma relating homological properties of $A$ and $A[I]$.
\begin{lemma}
\la{euler} Let $A$ be a finitely generated hereditary algebra, and
let $\, B := A[I] \,$ be the one-point extension of $A$ by a f.~g.
projective $A$-module. Then, for any finite-dimensional
$B$-modules $ \,\bU = (U, \Ui) $ and $ \bV = (V, \Vi) \,$, we have
\begin{equation}
\la{eu}
\chi_{B}(\bU, \bV) = \chi_A(U, V) + \dim(\Ui)\, [\,\dim
(\Vi) - \dim\,\Hom_A(I, V)\,]\ ,
\end{equation}
where $\, \chi_A\,$ and $\, \chi_B\, $ denote the Euler
characteristics for the Ext-groups over the algebras $ A $ and $ B
$ respectively.
\end{lemma}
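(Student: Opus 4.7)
My plan is to reduce the identity, via additivity of $\chi_B$ on short exact sequences in both arguments, to four corner cases obtained from the canonical decomposition of every $B$-module $\bV = (V, \Vi, \varphi_V)$:
\begin{equation*}
0 \to (V, 0, 0) \to \bV \to (0, \Vi, 0) \to 0 \qquad \text{in } \Mod(B).
\end{equation*}
Here $(V, 0, 0)$ is the restriction of the $A$-module $V$ along $i : B \to A$, and $(0, \Vi, 0)$ carries trivial $A$-action. The analogous SES holds for $\bU$.

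First I would verify that $\chi_B$ is a well-defined finite alternating sum. Since $A$ is hereditary and the functor $V \mapsto (V, 0, 0)$ is exact and takes $A$-projectives to $B$-projectives (because $(P, 0, 0)$ is a direct summand of $(Be)^n$ whenever $P$ is a direct summand of $A^n$, by Lemma~\ref{LL6}$(1)$), the module $(V, 0, 0)$ has $B$-projective dimension $\leq 1$. Using the identification $B\ei \cong (I, \c, \id)$ supplied by Lemma~\ref{LL6}, the sequence
\begin{equation*}
0 \to (I \otimes \Vi, 0, 0) \to (B\ei) \otimes_{\c} \Vi \to (0, \Vi, 0) \to 0
\end{equation*}
is also a projective resolution over $B$: the middle term is projective by construction, and the kernel is projective because $I$ is projective over $A$. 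Consequently $(0, \Vi, 0)$ also has projective dimension $\leq 1$, and every finite-dimensional $B$-module has projective dimension $\leq 2$.

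With finiteness established, additivity reduces the claim to evaluating $\chi_B$ on the four corner pairs $((U,0,0), (V,0,0))$, $((U,0,0), (0,\Vi,0))$, $((0,\Ui,0), (V,0,0))$, $((0,\Ui,0), (0,\Vi,0))$. I would compute each directly from the two projective resolutions above, using the canonical identifications $\Hom_B((P,0,0), (W, W_\infty, \psi)) = \Hom_A(P, W)$ for $A$-projective $P$, and $\Hom_B(B\ei, (W, W_\infty, \psi)) = W_\infty$. A short calculation produces the values $\chi_A(U, V)$, $0$, $-\dim(\Ui)\dim\Hom_A(I, V)$, and $\dim(\Ui)\dim(\Vi)$ respectively; summing them gives exactly the right-hand side of \eqref{eu}.

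There is no substantive obstacle: the argument is essentially bookkeeping. The only delicate points are to describe the two projective resolutions correctly and to verify the two $\Hom$-identifications, both of which rest precisely on the hypotheses that $A$ is hereditary and that $I$ is a finitely generated projective $A$-module.
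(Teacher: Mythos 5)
Your proof is correct, and it takes a genuinely different route from the paper's. The paper invokes a theorem of Bendiffalah that supplies the five-term exact sequence
\[
0 \to \Hom_{B}(\bU,\bV) \to \Hom_A(U,V)\oplus\Hom_{\c}(\Ui,\Vi) \to \Hom_{\c}(\Ui,\Hom_A(I,V)) \to \Ext^1_B(\bU,\bV) \to \Ext^1_A(U,V) \to 0
\]
together with the vanishing of $\Ext^k_B$ for $k\ge 2$; the identity \eqref{eu} is then read off by taking the alternating sum. You instead derive the formula from scratch: you split each argument along the canonical short exact sequence $0\to (V,0,0)\to\bV\to(0,\Vi,0)\to 0$, verify additivity of $\chi_B$ by exhibiting finite projective resolutions of both corner pieces, and then compute $\chi_B$ on the four corner pairs directly. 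The two Hom-identifications you use ($\Hom_B((M,0,0),\bW)\cong\Hom_A(M,W)$ and $\Hom_B(B\ei,\bW)\cong W_\infty$) are correct and give exactly the four values $\chi_A(U,V)$, $0$, $-\dim\Ui\cdot\dim\Hom_A(I,V)$, and $\dim\Ui\cdot\dim\Vi$, which sum to the right-hand side of \eqref{eu}. In effect you are re-proving the relevant piece of Bendiffalah's sequence in this special case rather than citing it: the paper's proof is shorter, yours is self-contained. One small inefficiency: since both $(V,0,0)$ and $(0,\Vi,0)$ have projective dimension at most $1$, the extension $\bV$ also has projective dimension at most $1$ (not merely $\le 2$) — projective dimension of the middle term of a short exact sequence is bounded by the \emph{maximum} of the outer two, not their sum. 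This sharper bound matches the assertion in the paper that $\Ext^k_B$ vanishes for $k\ge 2$; your weaker bound of $2$ is still enough to make $\chi_B$ well-defined, so it doesn't affect the argument.
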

\begin{proof}  By \cite{Ben}, Th\'eor\`eme~1.1 (bis), there is a 5-term exact sequence
\begin{eqnarray}
\la{mvseq}
0 &\to&  \Hom_{B}(\bU, \bV) \to  \Hom_A(U, V)\, \oplus  \,
\Hom_{\c}(\Ui, \Vi) \to \\*[1ex]
&& \to \Hom_{\c}(\Ui,\, \Hom_{A}(I, V))
\to \Ext_B^1(\bU, \bV) \to \Ext_A^1(U, V) \to 0 \nonumber \ ,
\end{eqnarray}
and isomorphisms $\, \Ext_B^{k}(\bU, \bV) = \Ext_{A}^k(U,\,V) =  0 \,$ for all $\,k \ge 2\,$,
since $A$ is hereditary. Now, since $A$ is finitely generated, $ \Hom_A(U, V) $ and $ \Ext_A^1(U,V) $
are finite-dimensional whenever $ U $ and $ V $ are finite-dimensional. It follows from \eqref{mvseq} that
$\,\chi_B(\bU, \bV) \,$ is well defined and
related to $ \chi_A(U, V) $ by \eqref{eu}.
\end{proof}
\subsection{Representation varieties}
\la{modvar}
We recall the definition of representation varieties in the form
they appear in representation theory of associative algebras (see \cite{K}, Chap.~II, Sect.~2.7).

Let $ R $ be a finitely generated algebra. Fix $\,S
\,$, a finite-dimensional semisimple subalgebra of $ R $, and $ V
$, a finite-dimensional $S$-module. By definition, the {\it
representation variety} $\, \Rep_{S}(R,V) \,$ of $ R $ over $ S $
parametrizes all $R$-module structures on the vector space $V$
extending the given $S$-module structure on it. The $S$-module
structure on $ V $ determines an algebra homomorphism $ S \to
\End(V) $ making $ \End(V) $ an $S$-algebra. A point of
$\,\Rep_{S}(R,V)\,$ can thus be interpreted as an $S$-algebra map
$\, \varrho:\, R \to \End(V) $, and the tangent vectors at
$ \varrho $ can be identified with $S$-linear derivations
$\, R \to \End(V)$,\, i.~e.
\begin{equation}
\la{tgd}
T_{\varrho}\,\Rep_{S}(R, V) \cong \Der_{S}(R,\,\End\,V)\ .
\end{equation}
If $ S = \c $, we simply write $ \Rep(R, V) $ for $ \Rep_{\c}(R,
V) $. Clearly, $ \Rep(R, V) $ is an affine variety\footnote{Here, by an
affine variety we mean an affine scheme of finite type over $
\c$.}. For any semisimple $\,S \subseteq R\,$,
$\,\Rep_{S}(R,V) \,$ can then be identified with a fibre of the
canonical morphism $\,\Rep(R, V) \to
\Rep(S,V)\,$, and hence it is an affine variety as well.

The group $ \Aut_S(V) $ of $S$-linear automorphisms of $ V $ acts
on $\, \Rep_{S}(R,V) \,$ in the natural way, with scalars $\,
\c^{*} \subseteq \Aut_S(V) \,$ acting trivially. We set $\,
\G_S(V) := \Aut_S(V)/\c^* $. The
orbits of $\G_S(V) $ on $\, \Rep_{S}(R,V) \,$ are in 1-1
correspondence with isomorphism classes of $R$-modules, which are
isomorphic to $\, V \,$ as $S$-modules. The stabilizer of a point
$\, \varrho:\,R \to \End(V) \,$ in $\, \Rep_{S}(R,V) \,$ is
canonically isomorphic to $ \Aut_R(V_\varrho)/\c^* \subseteq
\G_S(V)$, where $ V_\varrho $ is the left $R$-module corresponding
to $ \varrho $. The space $\, \Rep_{S}(R,V)/\!/\,\G_S(V) \,$ of
closed orbits in $\, \Rep_{S}(R,V) \,$ is an affine variety, whose
points are in bijection with isomorphism classes of
semisimple $R$-modules $M$ isomorphic to $\, V \,$ as $S$-modules.

Typically, representation varieties of $R$ are defined over
subalgebras $ S = \oplus_{i \in I} \c \,e_i \subset R \,$
spanned by idempotents. A finite-dimensional $S$-module is then
isomorphic to a direct sum
$\, \c^{\bn} := \oplus_{i \in I} \c^{n_i} $, each $\, e_i \,$
acting as the projection onto the $i$-th component. The
variety $\,\Rep_{S}(R,\c^\bn)\,$,
which we simply denote by $ \Rep_S(R, \bn) $ in this case,
parametrizes all algebra maps $\, R \to \End(\c^{\bn})$, sending $
e_i $ to the projection onto $ \c^{n_i} $. The group $
\G_S(\c^\bn) $ (to be denoted $\, \G_S(\bn) \,$) is isomorphic to
$\,\prod_{i\in I} \GL(n_i, \c)/\c^* \,$, with $ \c^* $ embedded
diagonally.

We will need a few general results on geometry of representation
varieties. First, we recall the following well-known fact (see,
for example, \cite{G}, Proposition~19.1.4).
\bthm
\la{salg}
If $R$ is a smooth algebra, then $\Rep(R,\,V)$ is a smooth variety.
More generally, let $ S $ be a semisimple subalgebra of $R$, and let
$\, \varrho \in \Rep_S(R, V)
\subseteq \Rep(R, V)\,$. If $\, \Rep(R,V) \,$
is smooth at $\, \varrho \,$, then so is $\, \Rep_S(R,V) $.
\ethm

Now, given an algebra $A$, we set $\, R := T_A \DDer(A) \,$,
see Section~\ref{DPA}. If $ A $ is finitely generated, then so is
$R$, and we consider the variety $ \Rep(R, V) $ of representations
of $R$ on a vector space $V$. The following result is proved in
\cite{CEG}, Section~5 (see also \cite{vdB}).
\bthm
\la{repcot}
If $A$ is smooth, $\, \Rep(R,\,V) $ is canonically
isomorphic to the cotangent bundle of $\, \Rep(A,\,V) $.
In particular, $\,\Rep(R,\,V)\,$ is smooth.
\ethm

Recall that $ R $ contains a
distinguished element: the derivation $\,\Delta_A:\, A \to \AA\,$
defined by $\, x \mapsto x \otimes 1 - 1 \otimes x \,$. We write
$$
\mu:\, \Rep(R, V) \to \End(V)\ , \quad \varrho \mapsto
\varrho(\Delta_A)\ ,
$$
for the evaluation map at $ \Delta_A $ and consider its fibre $\,
F_\xi := \mu^{-1}[\mu(\xi)] \,$ for some fixed representation $
\xi \in \Rep(R,V) $.

\begin{proposition}
\la{sm} If $ A $ is smooth, then $ F_\xi $ is smooth at
$ \varrho \in \Rep(R, V) $ if and only if $ \End_R(V_\varrho) \cong \c$.
\end{proposition}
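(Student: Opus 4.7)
The plan is to recognise $\mu$ as the algebraic moment map for the natural action of $\G(V):=\GL(V)/\c^\times$ on the symplectic variety $\Rep(R,V)\cong T^{\ast}\Rep(A,V)$ furnished by Theorem~\ref{repcot}, and then to apply the standard Jacobian criterion for smoothness of fibres. The derivative
\[
 d\mu_\varrho\colon T_\varrho\Rep(R,V)=\Der(R,\End V)\to\End(V),\qquad D\mapsto D(\Delta_A),
\]
lands in $\mathfrak{sl}(V)\subset\End(V)$: indeed, by Lemma~\ref{Lcomm}, $\Delta_A\in[A,\DDer(A)]$, whence $\varrho(\Delta_A)$ is a commutator in $\End(V)$ and in particular traceless.

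The main step is to identify the cokernel as
\[
 \Coker\bigl(d\mu_\varrho\colon T_\varrho\Rep(R,V)\to\mathfrak{sl}(V)\bigr)\;\cong\;\bigl(\End_R(V_\varrho)/\c\bigr)^{\!\ast},
\]
which gives $\rk(d\mu_\varrho)=\dim\End(V)-\dim\End_R(V_\varrho)$. Geometrically this is the standard moment-map identity: the image of the derivative of a moment map is the annihilator of the Lie algebra of the stabilizer of the basepoint, and the stabilizer of $\varrho$ in $\G(V)$ is $\End_R(V_\varrho)^\times/\c^\times$, with Lie algebra $\End_R(V_\varrho)/\c$. An algebraic derivation proceeds by extending the argument in the proof of Proposition~\ref{liftfd}: the vertical contribution along the projection $\Rep(R,V)\to\Rep(A,V)$ is controlled by the exact sequence~\eqref{exlift}, and the horizontal contribution coming from deformations of $\varrho|_A\in\Rep(A,V)$ must be combined with it and matched against the trace pairing~\eqref{biso}.

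Given the rank formula, the proposition follows from the Jacobian criterion for a morphism between smooth varieties: the scheme-theoretic fibre $F_\xi=\mu^{-1}(\mu(\xi))$ is smooth at $\varrho$ if and only if $d\mu_\varrho$ is surjective onto $\mathfrak{sl}(V)$, which by the cokernel formula is equivalent to $\dim\End_R(V_\varrho)=1$, i.e.\ (since $\End_R(V_\varrho)\supseteq\c$) to $\End_R(V_\varrho)\cong\c$. The main obstacle lies in the cokernel identification itself: while the moment-map interpretation makes it geometrically transparent, the careful algebraic bookkeeping — combining the vertical and horizontal tangent contributions and matching them against the trace pairing on $\End(V)$ — is the technical heart of the argument.
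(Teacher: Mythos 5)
Your proposal follows essentially the same route as the paper: recognize $\mu$ as the moment map for the $\GL(V)/\c^\times$-action on $\Rep(R,V)\cong T^*\Rep(A,V)$, compute $d\mu_\varrho$ as evaluation at $\Delta_A$, identify its cokernel with $\bigl(\End_R(V_\varrho)/\c\bigr)^{\!*}$ via the moment-map defining equation (the paper encodes this as the commutative diagram~\eqref{Dmom} with vertical isomorphisms, citing \cite{CEG}, and passes to the dual map $d\mu_\varrho^*$ so that the condition becomes $\Ker(\ad)=\End_R(V_\varrho)/\c=0$), and finish with the Jacobian criterion. The paper likewise asserts, without further elaboration, that smoothness of $F_\xi$ at $\varrho$ is equivalent to surjectivity of $d\mu_\varrho$, so your treatment matches it step for step.
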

\begin{proof}
By Theorem~\ref{repcot}, the variety $\,\Rep(R,V) $ is smooth.
By Lemma~\ref{Lcomm}, we also have that
$\, \Delta_A \in [A,\,\DDer(A)] \,$, and therefore
$\, \tr_V [\varrho(\Delta)] = 0 \,$ for any $ \varrho \in \Rep(R, V)$.
It follows that
\begin{equation}
\la{eval} \mu:\, \Rep(R, V) \to \End(V)_0 \ ,
\end{equation}
where $\, \End(V)_0 := \{f \in \End(V)\, :\, \tr_V(f) = 0\}\,$.

To compute the differential of \eqref{eval} we use
\eqref{tgd} and also $\,T_{\mu}\, \End(V)_0 \cong \End(V)_0 \,$.
With these identifications, it is easy to see that
\begin{equation}
\la{momdiff} d \mu_{\varrho}:\ \Der(R, \,\End\,V) \to \End(V)_0\ ,
\quad \delta \mapsto \delta(\Delta_A) \ .
\end{equation}
Now, observe that the map $\, d \mu_{\varrho}^* \,$ dual to
\eqref{momdiff} fits into the commutative diagram
\begin{equation}
\la{Dmom}
\begin{diagram}
\End(V)^*_0 & \rTo^{d \mu_{\varrho}^*}     & \Der(R,\,\End\,V)^* \\
\uTo^{\tr_V}&                  & \uTo_{i(\Tr \,\hat{\omega})} \\
\End(V)/\c  & \rTo^{\ad} & \Der(R,\,\End\,V) \\
\end{diagram}
\end{equation}
with vertical arrows being {\it isomorphisms}. Here,  $\,
\tr_V \,$ comes from the trace pairing on $ \End(V) $ (and hence,
it is obviously an isomorphism), and $\, \ad\,$ is induced by the
canonical map, assigning to $\, f \in \End(V) \,$ the inner
derivation $\, \ad(f):\, a \mapsto [f,\,\varrho(a)]\,$. The
crucial isomorphism $\, i(\Tr \,\hat{\omega}) \,$ is constructed\footnote{
To avoid confusion, here we use the same
notation for this map as in \cite{CEG}.} in \cite{CEG} (see {\it
loc. cit.},  the proof of Theorem~6.4.3). Instead of repeating
this construction, we simply notice that \eqref{eval} can be
viewed as a {\it moment map} for the natural action of $
\GL(V)/\c^* $ on $ \Rep(R, \,V)$.  The commutativity of
\eqref{Dmom} is then equivalent to the defining equation
for moment maps (see \cite{CEG}, (6.4.7)). Now, it remains to note
that $ F_\xi $ is smooth at $ \varrho $ if and only if
$ d \mu_{\varrho} $ is surjective. This is equivalent to
$ d \mu_{\varrho}^*$ being injective, and hence, in view of \eqref{Dmom},
to $ \Ker(\ad) = \{0\} $. Since $\, \Ker(\ad) \cong \End_R(V)/\c \,$, 
this last condition holds if and only if $ \End_R(V_\varrho) \cong \c $. 
The proposition follows.
\end{proof}

\section{The Calogero-Moser Spaces}
\la{CM}

\subsection{Rings of differential operators}
\la{Diff} Let $ X $ be a smooth affine irreducible curve over $ \c
$, with coordinate ring $ \O = \O(X)$,  and let $ \D = \D(X) $ be
the ring of differential operators on $ X $. We recall that $ \D $
is a filtered algebra $\, \D = \bigcup_{k \ge 0}\, \D_k \,$, with
filtration components $\,  0 \subset \D_0 \subset \ldots \subset
\D_{k-1} \subset \D_{k} \subset \ldots \,$ defined inductively by
\begin{equation*}
\D_k := \{\, D \in \End_{\c}\,\O\ :\ [D, f] \in \D_{k-1}\
\mbox{for all}\ f \in \O \,\}\ .
\end{equation*}
The elements of $ \D_k $ are called {\it differential operators of
order} $ \le k $. In particular, $\, \D_0 = \O \,$ consists of
multiplication operators by regular functions on $X$, and
$ \D_1 $ is spanned by $ \O $ and the space $
\Der(\O)$ of derivations of $ \O \,$ (the algebraic vector fields
on $X$). As $ X $ is smooth, $ \O $ and $ \Der(\O) $ generate $ \D
$ as an algebra, and $ \D $ shares many properties with the Weyl
algebra $ A_1(\c) = \D(\A^1)$. For example,
like $ A_1(\c)$, $\, \D $ is a simple Noetherian domain of
global dimension $ 1 $; however, unlike $ A_1(\c) $, $\,\D\,$ has a
nontrivial $K$-group.

We write $\, \DO := \bigoplus_{k=0}^\infty \D_k/\D_{k-1} \,$ for
the associated graded ring of $\, \D\, $: this is a commutative
algebra isomorphic to the coordinate ring of the cotangent bundle
$ T^* X $ of $ X $. Given a $\D$-module $ M $ with a filtration $
\{ M_{k} \} $, we also write $\, \MO := \bigoplus_{k=0}^\infty
M_k/M_{k-1} $ for the associated graded $\DO$-module. Using the
standard terminology, we say that $ \{M_{k}\} $ is a {\it good}
filtration if $ \MO $ is finitely generated (see, e.~g. \cite{Bj}).

\subsection{Stable classification of ideals}
\la{SCI}
Let $K_0(X)$ and $\Pic(X)$ denote the Gro\-then\-dieck
group and the Picard group of $ X $ respectively. By definition, $
K_0(X) $ is generated by the stable isomorphism classes of
(algebraic) vector bundles on $X$, while the elements of $ \Pic(X)
$ are the isomorphism classes of line bundles. As $ X $ is affine,
we may identify $ K_0(X) $ with $ K_0(\O)$, the
Grothendieck group of the ring $\O$, and $ \Pic(X) $ with $
\Pic(\O)$, the ideal class group of $ \O $. There are two natural
maps $\, \rk: K_0(X) \to \Z \,$ and $\, \det: K_0(X) \to \Pic(X)
\,$ defined by taking the rank and the determinant of a vector
bundle respectively.  In the case of curves, it is well-known that
$\, \rk \oplus \det:\, K_0(X) \stackrel{\sim}{\to} \Z \oplus
\Pic(X)\,$ is a group isomorphism.

Now, let $ \R(\D) $ denote the set of isomorphism classes of
(nonzero) left ideals of $\D $. Unlike $ \Pic $ in the commutative
case, $ \R(\D) $ carries no natural structure of a group. However,
since $ \D $ is a hereditary domain, $ \R(\D) $ can be identified
with the space of isomorphism classes of rank $1$ projective
modules, and there is a natural map relating $ \R(\D) $ to $
\Pic(X) $:
\begin{equation}
\la{E10}
\gamma:\, \R(\D) \to K_0(\D) \xrightarrow{\iota_*^{-1}} K_0(X)
\stackrel{\,\det\,}{\longrightarrow} \Pic(X) \ .
\end{equation}
The first arrow in \eqref{E10} is the tautological map
assigning to the isomorphism class of an ideal in $
\R(\D) $ its stable isomorphism class in $K_0(\D)$.
The second arrow $\, \iota_*^{-1}
\,$ is the inverse of the Quillen isomorphism $\,\, \iota_*:\,
K_0(X) \stackrel{\sim}{\to} K_0(\D)\,$ induced by the inclusion
$\, \iota:\,\O \into \D\,$.
The role of  $ \gamma $ becomes clear from the following
result proved in \cite{BW}.
\begin{theorem}[see \cite{BW}, Proposition~2.1]
\la{T4} Let $ M $ be a projective $\D$-module of rank $1$ equipped
with a good filtration such that $ \MO $ is
torsion-free. Then

$(a)$ there is a unique (up to isomorphism) ideal $ \I_{M}
\subseteq \mathcal{O} \,$, such that $ \MO $ is isomorphic to a
sub-$\DO$-module of $  \DO \I_M $ of finite codimension (over
$\c$);

$(b)$ the class of $ \I_M $ in $ \Pic(X) $ and the codimension $\,
n := \dim_{\c}\,[\DO \I_M/\MO]\,$ are independent of the choice of
filtration on $ M $, and we have $\,\gamma [M] = [\I_M] \,$;

$(c)$ if $ M $ and $ N $ are two projective $\D$-modules of rank
$1$, then
$$
[\,M\,] = [\,N\,] \ \mbox{in}\ K_0(\D)
\quad \Longleftrightarrow \quad [\I_{M}] = [\I_{N}]
\ \mbox{in}\ \Pic(X)\ .
$$
\end{theorem}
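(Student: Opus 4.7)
The plan is to construct $\I_M$ from the graded module $\MO$ by a ``generic fractional ideal'' procedure, and then to use reflexive-hull arguments on the smooth affine surface $T^*X = \Spec\,\DO$ both for uniqueness and for the $K$-theoretic identification with $\gamma[M]$.

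For $(a)$, set $K := \Frac(\O)$ and let $\mathfrak{t} \subseteq K$ denote the fractional ideal corresponding to the tangent line bundle $\Der(\O)$ under a chosen $K$-trivialization $\xi$ of $\Der(\O) \otimes_\O K$. Then $\DO \otimes_\O K = K[\xi]$, and the $k$-th graded piece of $\DO$ identifies with $\mathfrak{t}^k\xi^k$ inside $K\xi^k$. Tensoring $\MO$ with $K$ yields a finitely generated torsion-free graded $K[\xi]$-module of rank one, which is free of the form $K[\xi]\{d\}$ for a unique integer $d$. Since $\MO \hookrightarrow \MO \otimes_\O K$ is injective, each graded piece $(\MO)_k$ identifies with $J_k\xi^{k-d}$ for a finitely generated fractional ideal $J_k \subseteq K$; the $\DO$-action forces $\mathfrak{t} J_k \subseteq J_{k+1}$, so the normalized ideals $L_k := \mathfrak{t}^{-(k-d)}J_k$ form an ascending chain. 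Finite generation of $\MO$ stabilizes this chain at a fractional ideal $L_\infty$, and clearing denominators gives an integral $\I_M \subseteq \O$ isomorphic to $L_\infty$. The inclusions $J_{l+d} = \mathfrak{t}^l L_{l+d} \subseteq \mathfrak{t}^l L_\infty$ then exhibit $\MO$ as a $\DO$-submodule of $\DO\I_M$, with cokernel vanishing in all sufficiently high degrees and of finite $\c$-length in each low degree, hence of finite total $\c$-codimension.

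For the uniqueness of $\I_M$ in $(a)$ and its filtration-independence in $(b)$, I interpret $\MO$ as a torsion-free rank-one coherent sheaf on the smooth affine surface $T^*X$. Its reflexive hull $\MO^{\vee\vee}$ is a rank-one reflexive, hence invertible, $\DO$-module; dualizing the exact sequence $0 \to \MO \to \DO\I_M \to Q \to 0$ with $Q$ of finite $\c$-length (using that $\Ext^1$ from finite-length modules into $\DO$ vanishes by depth considerations on the regular $2$-dimensional ring $\DO$) produces an isomorphism $\MO^{\vee\vee} \cong \DO\I_M$. Since $\pi^*: \Pic(X) \xrightarrow{\sim} \Pic(T^*X)$ for the $\A^1$-bundle $\pi: T^*X \to X$, this pins down $[\I_M] \in \Pic(X)$ uniquely. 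The explicit construction is also visibly invariant under filtration shifts: replacing $\{M_k\}$ by $\{M_{k-s}\}$ sends $d \mapsto d+s$ and $J_k \mapsto J_{k-s}$, but leaves each $L_k$, and hence $L_\infty$, unchanged; since any two good filtrations of $M$ are sandwiched between bounded shifts, $[\I_M]$ depends only on $M$. To identify $[\I_M]$ with $\gamma[M]$, factor Quillen's isomorphism $\iota_*: K_0(X) \xrightarrow{\sim} K_0(\D)$ as $\A^1$-invariance $\pi^*: K_0(X) \xrightarrow{\sim} K_0(\DO)$ followed by the inverse of the associated-graded map $K_0(\D) \xrightarrow{\sim} K_0(\DO)$, $[M] \mapsto [\MO]$. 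In $K_0(\DO)$, $[\MO] = [\DO\I_M] - [Q]$ with $[Q]$ supported in codimension $\geq 2$ on $T^*X$, hence in the bottom piece of the codimension filtration on $K_0$ and trivial in the $\Pic(T^*X) \cong \Pic(X)$ quotient. Therefore $\gamma[M]$, the determinant of the image of $[M]$ in $K_0(X)$, equals $[\I_M]$. Part $(c)$ is then immediate: for rank-one projective $\D$-modules, equality in $K_0(\D)$ is detected by rank and determinant, so $[M] = [N]$ iff $[\I_M] = [\I_N]$.

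The principal technical point is verifying that the finite-length correction $[Q] \in K_0(\DO)$ really does vanish in the $\Pic$-quotient: zero-dimensional subschemes of the surface $T^*X$ must sit in the codimension-two filtration piece of $K_0$ and not affect the determinant. A secondary subtlety is the correct bookkeeping of the integer shift $d$ together with the twist $\mathfrak{t}^{-(k-d)}$: without this specific normalization, $\I_M$ would appear to depend on the filtration through the canonical class $[\Omega^1_X] \in \Pic(X)$, so the precise form of the normalizing power of $\mathfrak{t}$ is essential.
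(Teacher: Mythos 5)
The paper does not prove Theorem~\ref{T4} itself --- it is cited from \cite{BW}, Proposition~2.1 --- so I can only check your argument on its own terms. Your construction of $\I_M$ from the normalized fractional ideals $L_k := \mathfrak{t}^{-(k-d)}J_k$ is sound, the reflexive-hull identification $\MO^{\vee\vee} \cong \DO\I_M$ on the smooth surface $T^*X$ correctly gives the uniqueness claim in $(a)$, and the $K_0$-calculation (factoring Quillen's isomorphism through $\pi^*$ and the $\text{gr}$ map and observing that $[Q]$ dies in the $\Pic$-quotient) correctly gives $\gamma[M]=[\I_M]$ and part $(c)$.

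There are, however, two problems with part $(b)$, one repairable and one a genuine gap. The repairable one: you say ``any two good filtrations of $M$ are sandwiched between bounded shifts, [so] $[\I_M]$ depends only on $M$,'' but comparability of good filtrations does \emph{not} by itself imply $L_\infty = L'_\infty$. What you actually need is the stronger statement --- Lemma~\ref{L6} of this paper, a version of \cite{BW2}, Lemma~10.1 and \cite{NS}, Lemma~3.2 --- that two good filtrations on $M$ whose associated graded modules are both torsion-free agree \emph{up to a shift}. With that, your shift-invariance of $L_\infty$ finishes the job; as it happens, your $K_0$-argument also independently proves the filtration-independence of $[\I_M]$, since $[\MO]\in K_0(\DO)$ is filtration-independent, so this portion is saved.

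The genuine gap is that part $(b)$ also asserts that $n := \dim_\c(\DO\I_M/\MO)$ is independent of the filtration, and you do not address this at all. Moreover your $K_0$-machinery cannot produce it: because $K_0(T^*X) \cong K_0(X)$ with $X$ a curve, the codimension-$\geq 2$ piece of the topological filtration on $K_0(T^*X)\otimes\Q$ is zero (as is $CH^2(T^*X)$ by homotopy invariance), so the class $[Q]$ of a finite-length module carries no information about $\dim_\c Q$. Here there is no substitute for the equal-up-to-shift lemma: once it is invoked, $n = \sum_k \dim_\c(L_\infty/L_k)$ is visibly invariant under a shift of the filtration, and $(b)$ is complete. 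Without it, your proof of $(b)$ is incomplete.
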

Theorem~\ref{T4} shows that the fibres of  $ \gamma $ are
precisely the stable isomorphism classes of ideals of $ \D \,$:
thus, up to isomorphism in $ K_0(\D) $, the ideals of $ \D $ are
classified by the elements of $ \Pic(X) $. Our goal is to refine
this classification by describing the fibres of $ \gamma $ in
geometric terms. As we will see in Section~\ref{CMMap}, each fibre
$ \gamma^{-1}[\I] $ naturally breaks up into a countable union of
affine varieties $ \overline{\CC}_n(X, \I) $. In the next section,
we introduce these varieties and study their geometric properties.

\subsection{The definition of Calogero-Moser spaces}
\la{GCMS} Given a curve $ X $ with a line bundle $ \I $, we set $
A := \O(X) $ and form the one-point extension $ B := A[\I] $. By
Lemma~\ref{LL6}$(2)$, $\, B \,$ is a smooth
algebra, since so is $ A $ and $ \I$ is a f.~g.
projective $A$-module. As in Section~\ref{1-point}, we will
identify the subalgebra of diagonal matrices in $ B $ with $ \tA
:= A \times \c $, and let $ \tbtheta: \, B \onto \tA $ denote the
natural projection, see \eqref{EE1}. Since $ \tbtheta $ is a
nilpotent extension, it is suggestive to think of `$\Spec\,B $'
 as a (noncommutative) infinitesimal `thickening' of
$\,\Spec\,\tA = X \bigsqcup \,{\rm pt}\,$.

We now prove two auxiliary lemmas. The first lemma implies that $
B $ is determined, up to isomorphism, by the class of $\,\I\,$ in
$\, \Pic(X) $ and is independent of $ \I $ up to Morita
equivalence. The second lemma computes the Euler characteristics
for representations of $ B $, refining the result of Lemma~\ref{euler}.
\begin{lemma}
\la{L10}
For line bundles $ \I $ and $ \J $, the algebras  $ A[\I] $ and $
A[\J] $ are

$(a)$ Morita equivalent;

$(b)$ isomorphic if and only if $ \J \cong \I^\tau $ for some $
\tau \in \Aut(X) $, where $ \I^\tau := \tau^*\I$.
\end{lemma}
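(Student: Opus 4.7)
Using the triple description of $A[\I]$-modules from Section~\ref{1-point}, the plan is to exhibit an explicit equivalence of categories $F: \Mod(A[\I]) \stackrel{\sim}{\to} \Mod(A[\J])$. With $\K := \J \otimes_A \I^{-1} \in \Pic(X)$, the functor $F$ will send $(V, \Vi, \varphi: \I \otimes \Vi \to V)$ to $(\K \otimes_A V, \Vi, \psi)$, where $\psi: \J \otimes \Vi \to \K \otimes_A V$ is the composite of the canonical isomorphism $\J \otimes \Vi \cong \K \otimes_A \I \otimes \Vi$ with $\id_\K \otimes \varphi$. A quasi-inverse is obtained by tensoring with $\K^{-1}$, yielding the Morita equivalence of $A[\I]$ and $A[\J]$.

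\textbf{Part $(b)$, ``if'' direction.} Given $\tau \in \Aut(X)$ and an $A$-module isomorphism $\J \cong \I^\tau$, the plan is first to repackage it as an additive bijection $f: \I \to \J$ satisfying $f(a \cdot x) = (\tau^{-1})^*(a) \cdot f(x)$, and then to define
\[
\phi:\ A[\I] \to A[\J], \qquad \begin{pmatrix} a & b \\ 0 & c \end{pmatrix} \longmapsto \begin{pmatrix} (\tau^{-1})^*(a) & f(b) \\ 0 & c \end{pmatrix}.
\]
A direct matrix computation will show $\phi$ is multiplicative, the $(\tau^{-1})^*$-equivariance of $f$ being exactly what this check requires.

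\textbf{Part $(b)$, ``only if'' direction, and main obstacle.} The substantive step is to analyze an arbitrary algebra isomorphism $\phi: A[\I] \to A[\J]$ via primitive idempotents. Since $A$ is a commutative integral domain it is indecomposable, so (as noted in Section~\ref{1-point}) $\{e, \ei\}$ is a complete set of primitive orthogonal idempotents in $A[\I]$, and similarly $\{e', \ei'\}$ in $A[\J]$. The delicate point, which I expect to be the main obstacle, is to show that $\phi(\ei)$ is conjugate, by some unit $u \in A[\J]^\times$, to $\ei'$ rather than to $e'$: this will rest on distinguishing the two corner algebras ($\ei' A[\J] \ei' \cong \c$ is finite-dimensional while $e' A[\J] e' \cong A$ is not) together with the general fact that primitive idempotents with isomorphic corners are conjugate by a unit. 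Granting this, replacing $\phi$ by $x \mapsto u^{-1} \phi(x) u$ reduces to the normalized case $\phi(\ei) = \ei'$ and $\phi(e) = e'$. The restriction of $\phi$ then yields an algebra automorphism $\sigma: A \to A$ on the $e$-corner, the identity on the $\ei$-corner, and an additive bijection $f: \I \to \J$ on the off-diagonal piece $eA[\I]\ei$; compatibility of $\phi$ with the product $(eae)(eb\ei) = e(ab)\ei$ in $A[\I]$ forces $f(ab) = \sigma(a) f(b)$. Writing $\sigma = (\tau^{-1})^*$ for the unique $\tau \in \Aut(X)$ then recovers $\J \cong \I^\tau$.
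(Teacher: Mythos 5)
Your parts $(a)$ and the ``if'' half of $(b)$ essentially track the paper. For $(a)$, your explicit functor $F$ is tensoring with the same Morita bimodule $\tilde{\L} \otimes_{\tA} A[\I]$ (with $\L = \J\I^\vee$) that the paper constructs; you shortcut the paper's progenerator verification, which uses $\L \oplus \L \cong A \oplus \L^{2}$ over the Dedekind domain $A$, by producing a quasi-inverse directly, which is slightly cleaner but the same equivalence. The ``only if'' direction of $(b)$ is left as an exercise in the paper, so your outline there is the only proof on the table.

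There is, however, a genuine gap in that outline: the ``general fact'' you invoke — that primitive idempotents with isomorphic corner algebras are conjugate by a unit — is false. Already in the ring of upper-triangular $2\times 2$ matrices over $\c$, the idempotents $E_{11}$ and $E_{22}$ have corner algebras both isomorphic to $\c$ yet are not conjugate (an upper-triangular unit cannot move a rank-one idempotent off the $(1,1)$ slot); the correct criterion, even for semiperfect rings, involves $eR \cong fR$, not $eRe \cong fRf$. Moreover, your phrasing ``conjugate to $\ei'$ rather than to $e'$'' tacitly presupposes that every primitive idempotent of $A[\J]$ is conjugate to one of those two, which for this non-Artinian ring also wants an argument. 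The fix is elementary and particular to $A[\J]$: the off-diagonal $N := e'\,A[\J]\,\ei'$ is a square-zero two-sided ideal with $A[\J]/N \cong A \times \c$. Any isomorphism $\phi$ carries the corresponding nil ideal of $A[\I]$ onto $N$, hence induces $\bar\phi \in \Aut(A\times\c)$, which must preserve the two factors because $A \not\cong \c$; thus $\phi(\ei) = \ei' + n$ with $n \in N$. Since $\ei' n = 0$, $n\ei' = n$ and $n^{2} = 0$, the unit $v := 1+n$ has inverse $1-n$ and satisfies $v^{-1}\phi(\ei)\,v = (1-n)(\ei'+n)(1+n) = \ei'$. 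With that step supplied, the rest of your outline goes through.
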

\begin{proof}
$(a) $ Given $ \I $ and $ \J $, we set $ \L := \Hom_A(\I,\,\J) $,
which is a line bundle on $ X $ isomorphic to $\, \J\,\I^\vee = \J
\otimes_A \I^\vee \,$, where $\, \I^\vee \,$ is the dual of $ \I
$. Then, we extend $ \L $ to a line bundle over $ \tA $, letting
$\,\tilde{\L} := \L \times \c \,$, and define $\, P := \tilde{\L}
\otimes_{\tA} B\,$, where $ B = A[\I]$. Clearly, $ P $ is a f.~g.
projective $B$-module. On the other hand, since $ A $ is a
Dedekind domain, $\, \L \oplus \L \cong A \oplus \L^2 \,$, where
$\,\L^2 = \L \otimes_A \L\,$, and hence $\, \tilde{\L} \oplus
\tilde{\L} \cong \tA \oplus \tilde{\L}^2 \,$. It follows that $ B
$ is isomorphic to a direct summand of $ P \oplus P $,  so $ P $
is a generator in the category of right $B$-modules. By Morita
Theorem, the ring $ B $ is then equivalent to $ \End_B(P) $. Now,
since $\,\End_B(P) = \Hom_B(\tilde{\L} \otimes_{\tA} B,\,P) \cong
\Hom_{\tA}(\tilde{\L},\,P)\,$ and $\, P \cong {\tilde \L}
\oplus (0,\,\L\,\I)  \,$ as a (right) $\tA$-module, we have
$\,\End_{B}(P) \cong A[\J]$.

$(b)$ If $ \J \cong \I $, then $\, A[\J] \cong A[\I]\,$, by
Lemma~\ref{LL6}$(3)$. Without loss of generality, we may
therefore identify $ \I $ and $ \J $ with ideals in $ A $. Given
$\, \tau \in \Aut(X) = \Aut(A)\,$, we have then $
\I^\tau  = \tau^{-1}(\I) $, and the natural map
$\,\tau^{-1}: A[\I] \to  A[\I^\tau]\,$ is a required isomorphism.
The converse statement is left as an exercise to the reader.
\end{proof}
\begin{lemma}
\la{Lchi} For any finite-dimensional $B$-modules $ \,\bU = (U, \Ui) $ and 
$ \bV = (V, \Vi) \,$, we have
$$ 
\chi_B(\bU, \bV)  = \dim(\Ui)\,[\,\dim (\Vi) - \dim(V)\,]\ .
$$
\end{lemma}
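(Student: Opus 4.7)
My plan is to deduce the formula directly from Lemma~\ref{euler}, which (with $B = A[\I]$) gives
\[
\chi_B(\bU,\bV) \;=\; \chi_A(U,V) \;+\; \dim(\Ui)\,\bigl[\,\dim(\Vi) - \dim\Hom_A(\I,V)\,\bigr].
\]
Thus it is enough to establish two auxiliary facts: that $\dim \Hom_A(\I, V) = \dim V$, and that $\chi_A(U,V) = 0$. Both rely on the hypothesis that $A = \O(X)$ is a Dedekind domain (since $X$ is a smooth affine curve) and that any finite-dimensional $A$-module is necessarily a torsion module of finite length, supported at finitely many closed points of $X$.

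First I would prove that $\dim\Hom_A(\I,V) = \dim V$ for the line bundle $\I$ and the finite-dimensional module $V$. Since $\I$ is invertible, $\Hom_A(\I,V) \cong \I^{\vee}\otimes_A V$. The module $V$ is torsion, hence a direct sum of its localizations $V_\mathfrak{p}$ at the closed points $\mathfrak{p}$ of its (finite) support. At each such $\mathfrak{p}$ the line bundle $\I$ is locally free of rank one, so $(\I^{\vee}\otimes_A V)_\mathfrak{p} \cong V_\mathfrak{p}$ as $\c$-vector spaces. Summing over $\mathfrak{p}$ gives the claimed equality of dimensions.

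Next I would show $\chi_A(U,V) = 0$. Since $A$ is hereditary and $U$ is finitely generated torsion, $U$ admits a two-term projective resolution $0 \to P_1 \to P_0 \to U \to 0$ with $\rk P_0 = \rk P_1$ (equal rank since $U$ is torsion, hence has class zero in the rank homomorphism $K_0(A)\to \Z$). Because $\Ext^i_A(P_j,V) = 0$ for $i\geq 1$, the long exact sequence of $\Hom_A(-,V)$ yields
\[
\chi_A(U,V) \;=\; \dim\Hom_A(P_0, V) - \dim\Hom_A(P_1, V).
\]
By the structure theorem for finitely generated projective modules over a Dedekind domain, each $P_j$ is of the form $A^{r-1}\oplus \I_j$ for some line bundle $\I_j$, where $r = \rk P_j$. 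Combining the preceding paragraph (applied to each $\I_j$) with $\Hom_A(A,V) \cong V$, we obtain $\dim \Hom_A(P_j, V) = r\dim V$, independent of $j$. Hence $\chi_A(U,V) = 0$.

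Substituting both results back into the expression provided by Lemma~\ref{euler} yields the desired formula
\[
\chi_B(\bU,\bV) \;=\; \dim(\Ui)\,\bigl[\,\dim(\Vi) - \dim(V)\,\bigr].
\]
I do not expect any serious obstacle; the only subtlety is to invoke correctly that $\rk P_0 = \rk P_1$ in the resolution of the torsion module $U$, which ultimately rests on $X$ being one-dimensional.
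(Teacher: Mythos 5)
Your proof is correct but follows a genuinely different route from the paper's. The paper first proves $\chi_A(U,V)=0$ directly: it reduces to the case where $U$ and $V$ are supported at a single point $\m$, rewrites $\Ext^i_{A_\m}$ in terms of $\Tor^{A_\m}_i$ by duality, and then invokes Serre's intersection multiplicity theorem (regular local ring of Krull dimension $1$, $\dim U + \dim V^* = 0$); it then obtains $\dim\Hom_A(\I,V)=\dim V$ as a consequence, by dualizing $0\to\I\to A\to A/\I\to 0$ and applying $\chi_A(A/\I,V)=0$. You reverse the logical order: you first establish $\dim\Hom_A(\I,V)=\dim V$ by a localization argument (using $\Hom_A(\I,V)\cong\I^\vee\otimes_A V$ and the local freeness of $\I$ at the finitely many points of $\supp V$), and then derive $\chi_A(U,V)=0$ from a length-one projective resolution $0\to P_1\to P_0\to U\to 0$ with $\rk P_0=\rk P_1$ together with Steinitz's structure theorem. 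Both arguments are sound; yours avoids the appeal to Serre's intersection theory at the cost of invoking Steinitz, and is arguably more elementary. As a small simplification, your localization computation already shows directly that $\dim\Hom_A(P,V)=\rk(P)\cdot\dim V$ for \emph{any} finitely generated projective $P$, so the decomposition $P_j\cong A^{r-1}\oplus\I_j$ is not actually needed — the equality of ranks alone closes the argument.
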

\begin{proof}
First, observe that $ \chi_A(U, V) = 0 $  for any pair of
finite-dimensional $A$-modules. Indeed, if $ U $ and $ V $ have
disjoint supports, then $\,\Hom_A(U, V) = \Ext_A^1(U, V) = 0 \,$,
and certainly $\, \chi_A(U,V) = 0 \,$. By additivity of $ \chi_A
$, it thus suffices to see that $\, \chi_A(U, V) = 0 \,$ for
modules $\, U \,$ and $\, V \,$ supported at one point. If $ \m $
is the maximal ideal of $ A $ corresponding to that point, we have
$\,\Ext^i_A(U,V) \cong \Ext_{A_\m}^i(U,V)\,$ and
$\,\Ext^i_{A_\m}(U, V) \cong \Tor_i^{A_\m}(V^*, U)^* \,$
for all $\, i \geq 0 \,$. Thus
$$
\chi_A(U,V) = \chi_{A_\m}(U,V) = \sum\, (-1)^i \dim_\c
\Tor_i^{A_\m}(V^*, U) \ .
$$
The vanishing of $ \chi_{A}(U,V) $ follows now from standard
intersection theory, since $A_\m$ is a regular local ring of
(Krull) dimension $1$, while $\, \dim_{A_\m}(U) + \dim_{A_\m}(V^*)
= 0 \,$ (see \cite{S}, Ch.~V, Part~B, Th.~1).

Identifying $ \I $ with an ideal in $ A $ and dualizing
$\, 0 \to \I \to A \to A/\I \to 0 \,$ by $ V $, we get
\begin{equation}
\la{homA}
\dim_\c\Hom_A(\I, V) = \dim_\c(V) - \chi_A(A/\I, V) =
\dim_\c(V)\ .
\end{equation}
The result follows now from Lemma~\ref{euler}.
\end{proof}

Next, we introduce deformed preprojective algebras over $B$. For
this, we need to compute the trace map $\,\Tr_B : K_0(B)
\to \HH_0(B) $. Recall that $\,\Tr_{*} :\, K_0 \to \HH_0 \,$
is a natural transformation of functors on the category of associative
algebras, so $\, \tbtheta:\, B \to \tA \,$ gives
rise to the commutative diagram
\begin{equation}
\la{D6}
\begin{diagram}[small, tight]
K_0(B)             & \rTo^{\Tr_B}     & \HH_0(B) \\
\dTo^{}&                  & \dTo_{} \\
K_0(\tA)   & \rTo^{\Tr_{\tA}\ } &
\HH_0(\tA) \\
\end{diagram}
\end{equation}
The two vertical arrows in \eqref{D6} are isomorphisms: the first
one is given by Lemma~\ref{LL6}$(6)$, while the second has the
obvious inverse (induced by the inclusion $\, \tA \into B \,$).
We will use these isomorphisms to identify $\,\HH_0(B)
\cong \HH_0(\tA)= \tA \subset B \,$ and
\begin{equation}
\la{K}
K_0(B) \cong K_0(\tA) \cong \Z \oplus \Z \oplus \Pic(X)\ .
\end{equation}

Now, for any commutative algebra (e.g., $ \tA $), the trace map
factors through the rank. Hence, with above
identifications, $\,\Tr_{B}\,$ is completely determined by its
values on the first two summands in \eqref{K}, while vanishing on
the last. Since $\,\Tr_B[(1,0)] = e \,$ and $\,\Tr_B[(0,1)] = \ei
\,$, the linear map $\, \Tr_B: \c \otimes_{\Z} K_0(B) \to \HH_0(B)
\,$ takes its values in the two-dimensional subspace $ S $ of $ B
$ spanned by  $ e $ and $ \ei $. Identifying $ S $
with $ \c^2$, we may regard the vectors $\, \blambda :=
(\lambda,\, \lambdai) = \lambda e + \lambdai \ei \in S \,$ as
weights for the family of deformed preprojective algebras
associated to $ B \,$:
\begin{equation}
\la{dpa1}
\Pi^{\blambda}(B) = T_{B}\DDer(B)/ \langle \Delta_B
- \blambda \rangle\ .
\end{equation}

Since $ A $ is an integral domain, $\,\{e,\, \ei\}\,$ is a
complete set of primitive orthogonal idempotents in $
\Pi^{\blambda}(B) $, and $\, S = \c e\, \oplus\, \c \ei \,$ is the
associated semisimple subalgebra of $ \Pi^{\blambda}(B) $.
Now, for each $\, \bn = (n,\, n_\infty) \in \N^2$, we form the
variety $ \Rep_{S}(\Pi^{\blambda}(B), \bn) $ of representations of
$ \Pi^{\blambda}(B) $ of dimension vector $ \bn $ and, with
notation of Section~\ref{modvar}, define
\begin{equation}
\la{E25}
\CC_{\bn, \blambda}(X, \I) := \Rep_{S}(\Pi^{\blambda}(B),
\bn)/\!/ \,\G_S(\bn)\ .
\end{equation}
Thus, $\, \CC_{\bn, \blambda}(X, \I) \,$ is an affine scheme,
whose (closed) points are in bijection with isomorphism classes of
semisimple $\Pi^{\blambda}(B)$-modules of dimension vector $ \bn
$.
\blemma
\la{isv}
For any line bundles $ \I $ and $ \J $, the
schemes $\,\CC_{\bn, \blambda}(X, \I) \,$ and $\, \CC_{\bn,
\blambda}(X, \J) \,$ are isomorphic.
\elemma
\bproof  By Lemma~\ref{L10},  $ A[\I] $ and $ A[\J] $
are Morita equivalent: the corresponding equivalence is given by
\begin{equation}
\la{egg}
\Mod\,A[\I] \stackrel{\sim}{\to} \Mod\,A[\J]\ , \quad
\bV \mapsto
\tilde{\L} \otimes_{\tA} \bV \ ,
\end{equation}
where  $\, \tilde{\L} = \J \I^\vee \times \c \,$. The functor
\eqref{egg} induces an isomorphism of vector spaces: $\,
\HH_0(A[\I]) \stackrel{\sim}{\to} \HH_0(A[\J])\,$, which restricts
to the identity on $\, S \subset \tA \,$. By \cite{CB},
Cor.~5.5, it can then be extended (non-canonically) to a
Morita equivalence between  $\,
\Pi^{\blambda}(A[\I])\,$ and $\, \Pi^{\blambda}(A[\J])\,$ for any
$ \blambda \in S $. Now, if $ \bV = (V, \, \Vi) $ with $\,\dim_\c
V < \infty \,$, we have $\, \tilde{\L} \otimes_{\tA} \bV = ( \J
\I^\vee \otimes_A V, \,\Vi) \,$, so by formula \eqref{homA},
$$
\dim_\c[\J \I^\vee \otimes_A V] = \dim_\c\,\Hom_A(\I\J^\vee,\,V)
= \dim_\c (V) \ .
$$
This shows that \eqref{egg} preserves dimensions,
and its extension to $ \Pi^\lambda $ induces thus an isomorphism:
$\,\CC_{\bn, \blambda}(X, \I) \stackrel{\sim}{\to} \CC_{\bn,
\blambda}(X, \J) \,$. \eproof

The next lemma is a generalization of \cite{CBH}, Lemma~4.1\,: it
implies that $ \CC_{\bn, \blambda}(X, \I) $ is empty unless $\,
\blambda\cdot \bn := \lambda\, n + \lambdai n_\infty\,$ is zero.
\begin{lemma}
\la{P6} If $ \blambda\cdot \bn \not= 0 $, there are no
representations of $\Pi^{\blambda}(B)$ of dimension  $ \bn $.
\end{lemma}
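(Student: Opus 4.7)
The plan is to exploit the relation $\Delta_B = \blambda$ that holds in $\Pi^\blambda(B)$ by computing the trace of both sides in the given representation, and to show that the left-hand side necessarily has trace zero.

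First, I would set up the calculation. Let $\varrho: \Pi^\blambda(B) \to \End(\c^\bn)$ be a representation of dimension vector $\bn = (n, n_\infty)$. By the definition of $\Rep_S(\Pi^\blambda(B), \bn)$, the restriction of $\varrho$ to the semisimple subalgebra $S = \c e \oplus \c e_\infty$ sends $e$ to the projection of rank $n$ and $e_\infty$ to the projection of rank $n_\infty$. Since $\blambda = \lambda e + \lambda_\infty e_\infty$, taking the trace in the image of $\varrho$ gives
$$
\tr_{\c^\bn}[\varrho(\blambda)] = \lambda\,\tr_{\c^\bn}[\varrho(e)] + \lambda_\infty\,\tr_{\c^\bn}[\varrho(e_\infty)] = \lambda n + \lambda_\infty n_\infty = \blambda \cdot \bn\ .
$$
Since $\Delta_B = \blambda$ in $\Pi^\blambda(B)$, we have $\tr_{\c^\bn}[\varrho(\Delta_B)] = \blambda \cdot \bn$.

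Next, I would show that the trace on the left is automatically zero. Since $A = \O(X)$ is smooth and $\I$ is a f.g.\ projective $A$-module, Lemma~\ref{LL6}$(2)$ gives that $B = A[\I]$ is smooth. Applying Lemma~\ref{Lcomm} to $B$, we conclude that $\Delta_B \in [B, \DDer(B)]$. Thus $\Delta_B$ can be written as a finite sum of elements of the form $b \cdot \delta - \delta \cdot b$ with $b \in B$ and $\delta \in \DDer(B) \subset T_B \DDer(B)$. Applying the algebra map $\varrho$ (which is well defined on $T_B\DDer(B)$ and factors through $\Pi^\blambda(B)$), these become commutators $[\varrho(b), \varrho(\delta)]$ in $\End(\c^\bn)$, which have vanishing trace. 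Hence $\tr_{\c^\bn}[\varrho(\Delta_B)] = 0$.

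Comparing the two computations forces $\blambda\cdot \bn = 0$, so if $\blambda\cdot \bn \neq 0$ there can be no such representation. The only potential obstacle is ensuring that Lemma~\ref{Lcomm} actually applies at the level of $B$, but this is immediate from the smoothness of $B$ established in Lemma~\ref{LL6}$(2)$; no further work is required.
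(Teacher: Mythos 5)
Your proof is correct. The key observation—that any representation must send $\Delta_B$ to a traceless operator because $\Delta_B$ is a sum of commutators—is the right mechanism, and you correctly verify that $B = A[\I]$ is smooth so that Lemma~\ref{Lcomm} applies.

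Your route differs slightly from the paper's, which invokes Proposition~\ref{liftfd} (the criterion that a finite-dimensional $B$-representation lifts to $\Pi^\blambda(B)$ only if the trace of $\varrho(\blambda)$ vanishes on every $A$-module direct summand). You instead go directly from Lemma~\ref{Lcomm} ($\Delta_B \in [B, \DDer(B)]$) to the conclusion that commutators have trace zero, bypassing the formalism of Theorem~\ref{lift} and the Hochschild-homology/trace-pairing machinery behind Proposition~\ref{liftfd}. This is more elementary and more self-contained: you only need the necessary direction of the lifting criterion applied to the full module (not to each summand), and your argument isolates exactly that. The paper's choice to route through Proposition~\ref{liftfd} is natural in context since that proposition is already available and is used elsewhere (e.g.\ in Proposition~\ref{L11} and the proof of Theorem~\ref{T6}), but for Lemma~\ref{P6} alone your direct argument is arguably cleaner. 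Both proofs ultimately rest on the same fact established in Lemma~\ref{Lcomm}.
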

\begin{proof}
If $\, \bV = V \oplus \Vi $ is a  $\Pi^{\blambda}(B)$-module of
dimension  $ \bn $, then  $ e $ and $ \ei $ act on $ \bV $
as projectors onto $ V $ and $ \Vi $ respectively. The trace of
$\, \blambda = \lambda\, e + \lambdai \, \ei \in B \,$ on $ \bV $
is then equal to $ \blambda \cdot \bn $, and it must be zero, by
Proposition~\ref{liftfd}.
\end{proof}

\begin{example}
\la{Ex1} Let $ X $ be the affine line $ \A^{\! 1} $. Any line
bundle $ \I $ on $ X $ is trivial. So, choosing a coordinate on $
X $, we may identify $\, A \cong \c[x]\,$ and $\, \I \cong \c[x]
$. The one-point extension of $ A $ by $ \I $ is then isomorphic
to the matrix algebra
$$
A[\I] \cong \left(
\begin{array}{cc}
\c[x] & \c[x]\\*[1.1ex]
0    & \c
\end{array}
\right)\ ,
$$
which is, in turn, isomorphic to the path algebra $ \c Q $ of the
quiver $ Q $ consisting of two vertices $\,\{0, \,\infty\}\,$ and
two arrows $\, X:\, 0 \to 0 \,$ and $\, \v:\, \infty \to 0 $. In
fact, the map sending the vertices $0$ and $\infty $ to the
idempotents $ e $ and $ \ei $ and
$\, X \mapsto \left(
\begin{array}{cc}
x & 0\\
0 & 0
\end{array}
\right)\,$,
$\ \v \mapsto \left(
\begin{array}{cc}
0 & 1\\
0 & 0
\end{array}
\right)$,
extends to an algebra isomorphism $\,\c Q \stackrel{\sim}{\to}
A[\I]\,$.

Now, let $ \bar{Q} $ be the double quiver of $ Q $ obtained by
adding the reverse arrows $ Y := X^* $ and $ \w := \v^* $ to the
corresponding arrows of $Q$. Then, for any $\,\blambda =
\lambda\,e + \lambdai \ei \,$, with $ (\lambda, \,\lambdai) \in
\c^2 $, the algebra $\, \Pi^{\blambda}(Q) \,$ is isomorphic to the
quotient of $ \c\bar{Q} $ modulo the relation $\, [X,Y] + [\v, \w]
= \blambda \,$ (see \cite{CB}, Theorem~3.1). The ideal generated
by this  relation is the same as the ideal generated by the
elements $\, [X,Y] + \v\,\w - \lambda \,e \,$ and $\, \w\,\v
+\lambdai \ei \,$. Thus, the $ \Pi^{\blambda}(Q)$-modules can be
identified with representations $ \bV = V \oplus \Vi $ of $
\bar{Q} $, in which linear maps $ \,\sX, \sY \in \Hom(V,\, V),
\,\sv \in \Hom(\Vi,\, V),\, \sw \in \Hom(V,\, \Vi) \,$, given by
the action of $\, X, Y, \v, \w \,$, satisfy
\begin{equation}
\la{E26} [\sX,\,\sY] + \sv \,\sw  = \lambda \, \id_{V}\quad
\mbox{and} \quad \sw\,\sv = -\lambdai \, \id_{\Vi} \ .
\end{equation}
Now, taking $ \blambda = (1, -n) $, it is easy to see that all
representations of $ \Pi^\blambda(Q) $ of dimension vector $ \bn =
(n,\,1) $ are simple, and the varieties $ \CC_{\bn,
\blambda} $ coincide (in this special case) with the classical
Calogero-Moser spaces $ \CC_n $. This coincidence was first
noticed by W.~Crawley-Boevey (see \cite{CB1}, remark on p.~45).
For explanations and further discussion of this example we refer
the reader to \cite{BCE}.
\end{example}

Motivated by the above example, we will be interested in
representations of $ \Pi^{\blambda}(B)$ of dimension $ \bn = (n,1)
$. By Lemma~\ref{P6}, such representations may exist only if $\,
\blambda = 0 \,$ or $\, \blambda = (\lambda, -n\lambda) $, with $
\lambda \not=0 $. In this last case,
$\Pi^{\blambda}(B)$'s are all isomorphic to each other, so without
loss of generality we may assume $\lambda=1$.

\begin{proposition}
\la{L11} Let $ \blambda = (1, -n) $ and $ \bn = (n,1) $ with $ n
\in \N $. Then, for any $\,\I \,$, the algebra $\Pi^{\blambda}(B)
$ has modules of dimension vector $ \bn $. Every such module is
simple.
\end{proposition}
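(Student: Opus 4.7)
The plan is to dispose of simplicity first, since it follows almost immediately from Lemma~\ref{P6}, and then to settle existence by exhibiting an explicit $B$-indecomposable module of dimension vector $(n,1)$ and lifting it via the criterion of Proposition~\ref{liftfd}.

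For the simplicity statement, let $\bV$ be any $\Pi^{\blambda}(B)$-module of dimension vector $\bn = (n,1)$, and suppose $\bW \subseteq \bV$ is a $\Pi^{\blambda}(B)$-submodule of dimension vector $\bn' = (n', n_\infty')$, where $0 \le n' \le n$ and $n_\infty' \in \{0,1\}$. Since $\bW$ is itself a $\Pi^{\blambda}(B)$-representation, Lemma~\ref{P6} forces $\blambda \cdot \bn' = n' - n\, n_\infty' = 0$. The only solutions in the allowed range are $(n', n_\infty') = (0,0)$ and $(n,1)$, so either $\bW = 0$ or $\bW = \bV$, which is exactly the assertion that $\bV$ is simple.

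For existence, I would pick any closed point $p \in X$ with maximal ideal $\m \subset A$ and set $V := A/\m^n$ and $\Vi := \c$. Since $\I$ is a line bundle, a local trivialization at $p$ identifies $\I/\m^n\I$ with $A/\m^n = V$ and yields a surjective $A$-linear map $\varphi: \I \otimes_\c \Vi \to V$; the triple $\bV := (V, \Vi, \varphi)$ is then a $B$-module of dimension vector $(n,1)$. With $\blambda = e - n\,\ei$, the trace on $\bV$ equals $n - n = 0$, so by Proposition~\ref{liftfd} it is enough to show that $\bV$ admits no nontrivial $B$-direct summands. But $V = A/\m^n$ has local endomorphism ring $A/\m^n$, and so is $A$-indecomposable; hence in any decomposition $\bV = \bV_1 \oplus \bV_2$ one of the $A$-components $V_j$ must vanish, say $V_2 = 0$. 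The nonvanishing of $\varphi$ together with the commutativity square \eqref{1-hom} applied to the idempotent projecting onto $\bV_2$ then forces $\Vi^2 = 0$ as well, so $\bV_2 = 0$, as required.

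Granted indecomposability, Proposition~\ref{liftfd} produces a $\Pi^{\blambda}(B)$-lift of $\bV$ of dimension vector $(n,1)$, and the first paragraph shows it is automatically simple. I do not expect any serious obstacle: the only step requiring actual thought is the $B$-indecomposability check, which is a short diagram chase inside the one-point extension, and even this is forced by a single $A$-indecomposable choice of $V$ together with the surjectivity of $\varphi$.
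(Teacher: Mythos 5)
Your proof is correct, and the simplicity argument is word-for-word the paper's (apply Lemma~\ref{P6} to the dimension vector of a submodule). For existence you take a genuinely different, if closely related, route. The paper first isolates a clean general criterion: for a $B$-module $\bV = (V,\Vi,\varphi)$ with $\dim\Vi = 1$, \emph{surjectivity} of $\varphi$ alone forces $B$-indecomposability, because any nontrivial decomposition $\bV = (V',\Vi)\oplus(V'',0)$ traps $\im\varphi$ in $V'\subsetneq V$; it then realizes a surjective $\varphi: \I \to V$ with $V$ a sum of $n$ distinct skyscrapers $A/\m_1 \oplus \cdots \oplus A/\m_n$ avoiding the zero locus of $\I$, verifying $V\otimes_A\I\cong V$ by a short Tor computation. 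You instead take $V = A/\m^n$, a single length-$n$ fat point, get $\varphi$ from the local trivialization $\I/\m^n\I \cong A/\m^n$, and deduce $B$-indecomposability from $A$-indecomposability of $V$ (local endomorphism ring $A/\m^n$) together with nonvanishing of $\varphi$ via the diagram \eqref{1-hom}. Both approaches are valid; the paper's criterion has the small advantage of not needing $V$ to be $A$-indecomposable and is quoted again in the subsequent remark, whereas your fat-point construction is more economical to write down since it avoids the choice of auxiliary points and the Tor-vanishing check. (For $n=0$ your appeal to nonvanishing of $\varphi$ is vacuous, but then $\bV$ is one-dimensional and trivially indecomposable, so nothing breaks.)
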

\begin{proof}
On any $B$-module of dimension  $ \bn $, the element $\, \blambda
= e - n\, \ei  \in B \,$ will act with zero trace. Thus, by
Proposition~\ref{liftfd}, it suffices to see that there exist {\it
indecomposable} $B$-modules of this dimension vector. Now, a
$B$-module structure on $
\bV = V \oplus \Vi $ is determined by an $A$-module homomorphism
$\, \varphi_{V}:\, \I \otimes \Vi \to V \,$. If $ \bV $ is
decomposable with $\, \dim(\Vi) = 1 \,$, then one of its
summands must be of the form $ \bV' = V' \oplus \Vi $, where $ V'
$ is an $A$-module summand of $ V $ of dimension $ < n $. In that
case,  $\, \im(\varphi_V) \subseteq V' \subsetneq V $.
Thus, for constructing an indecomposable $B$-module of dimension $
\bn $, it suffices to construct a torsion $A$-module $ V $ of
length $ n $ together with a {\it surjective} $A$-module map $
\varphi: \I \to V $. Geometrically, this can be done as follows.

Identify $ \I $ with an ideal in $A$ and fix $\, n \,$ distinct
points $\, x_1,\, x_2,\,\ldots\, x_n\,$ on $X$ outside the zero
locus of $\,\I\,$. Let $\, V := A/\J\,$, where $\,\J\,$ is the
product of the maximal ideals $ \m_i \subset A $ corresponding to
$x_i$'s. Clearly, $\,A/\J \cong \oplus_{i=1}^n A/\m_i \, $ and $
\dim_{\c} V = n $. Now, since $A$ is a Dedekind domain and $\, \I
\not\subset \m_i \,$ for any $ i $, we have $\, (A/\J) \otimes_A
(A/\I) \cong \oplus_{i=1}^n  (A/\m_i) \otimes_A (A/\I)= 0 \,$ and
$\, \Tor^A_1(A/\J, A/\I) \cong (\I \cap \J)/\I\,\J = 0 \,$, so the
canonical map $\, V \otimes_A \I \to V $ is an isomorphism. On the
other hand, as $ V $ is a cyclic $A$-module, $ \I$ surjects
naturally onto $\, V \otimes_A \I \,$. Combining $\, \I \onto V
\otimes_A \I \stackrel{\sim}{\to} V \,$, we get the required
 $ \varphi $. This proves the first part of the proposition.

Now, let $ \bV = V \oplus \Vi $ be any $\Pi^{\blambda}(B)$-module
of dimension vector $ \bn $. Let $\, \bV' \,$ be a submodule of $
\bV $ of dimension vector $\, \bk = (k,\,k_\infty)\,$ (say). By
Lemma~\ref{P6}, we have then $\, \blambda \cdot \bk = k - n\,
k_\infty = 0 \,$. Since $\, 0 \leq k_\infty \leq n_\infty = 1 \,$,
there are only two possibilities: either $ \bk = 0 $ or $ \bk =
\bn $, i.e. $ \bV' $ is either $ 0 $ or $ \bV $. Hence $ \bV $ is
a simple module.
\end{proof}
\begin{remark}
1. The above argument shows that a $ B$-module $ \bV  $ of
dimension vector $ \bn = (n,1) $ lifts to a module over $
\Pi^\blambda(B) $ if and {\it only if}\, it is indecomposable.\\
\noindent
 2. If $ \bV $ is a $B$-module  with a surjective structure
map $\, \varphi_V: \I \otimes \Vi \to V \,$, then $\, \End_B(\bV)
\subseteq \End(\Vi) \,$. (This follows immediately from the
diagram \eqref{1-hom}, characterizing $B$-module homomorphisms.)
Hence the modules $ \bV $ constructed in Proposition~\ref{L11} are
actually indecomposables with $\, \End_B(\bV) \cong \c \,$.
\end{remark}
\begin{definition}
\la{CMv} The variety $\, \CC_{\bn, \blambda}(X,\I) \,$ with
$\,\blambda = (1,-n) $ and $ \bn = (n,1) $ will be denoted $
\CC_n(X,\I) $ and called the {\it $n$-th Calogero-Moser space} of
type $ (X,\I) $.
\end{definition}
In view of Proposition~\ref{L11}, the varieties $ \CC_n(X,\I) $
parametrize the isomorphism classes of simple $
\Pi^{\blambda}(B)$-modules of dimension $ \bn = (n,1) $; they are
non-empty for any $\, [\I] \in \Pic(X) \,$ and $ n \geq 0 $. In
the special case, when $ X $ is the affine line, $ \CC_n(X,\I) $
coincide with the ordinary Calogero-Moser spaces $ \CC_n $ (see
Example~\ref{Ex1}).\\

\begin{remark}
\la{sympl}
It follows from our discussion in Section~\ref{modvar} (see also
\cite{CEG} and \cite{vdB}) that the variety $ \CC_n(X, \I) $ 
can be obtained by symplectic reduction from the cotangent bundle of 
$ \Rep(B, \bn) $. This links our definition of Calogero-Moser spaces
to the one proposed by V.~Ginzburg (see \cite{BN}, Definition~1.2).
\end{remark}
\subsection{Smoothness and irreducibility}
\la{SmIr}
One of the main results of \cite{Wi} says that each
$\CC_{n} $ is a smooth affine irreducible variety of dimension $
2n $. Theorem~\ref{T6} below shows that this holds in general, for
an arbitrary curve $X$. To prove the irreducibility we will use
the approach of Crawley-Boevey \cite{CB2}, the starting point of
which is the following observation.
\begin{lemma}[\cite{CB2}, Lemma~6.1]
\la{irredu} If $X$ is an equidimensional variety, $Y$ is an
irreducible variety and $\,f:\,X \to Y\,$ is a dominant morphism
with all fibres irreducible of constant dimension, then $ X $ is
irreducible.
\end{lemma}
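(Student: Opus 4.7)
The plan is to assume for contradiction that $X$ decomposes as $X = X_1 \cup \cdots \cup X_r$ with $r \ge 2$ distinct irreducible components, and derive that some $X_j$ is forced to lie inside another, which is impossible. Write $d$ for the common dimension of the $X_i$ (equidimensionality), $m := \dim Y$, and $n$ for the constant fibre dimension. My first move is to establish that $d = m+n$ and that \emph{every} restriction $f_i := f|_{X_i}\colon X_i \to Y$ is dominant. For any fibre $F_y = f^{-1}(y)$, irreducibility forces $F_y \subseteq X_{i(y)}$ for some index, since $F_y = \bigcup_i (F_y \cap X_i)$ is a finite union of its closed subsets. Pigeon-holing over a generic non-empty open subset of $Y$, one component $X_{i_0}$ must satisfy $F_y \subseteq X_{i_0}$ for all generic $y$, making $f_{i_0}$ dominant with generic fibre of dimension $n$; the fibre dimension theorem then yields $d = m+n$. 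If some other $f_j$ were non-dominant, its image would have dimension $< m$, and the generic fibre of $f_j$ would have dimension $> d - m = n$, but this fibre is contained in an $n$-dimensional fibre of $f$, a contradiction. Hence every $f_i$ is dominant.

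Next, I would introduce the partition data $Y_i := \{\, y \in Y : F_y \subseteq X_i\,\}$ for each $i$. By the irreducibility of each fibre together with the decomposition $F_y = \bigcup_i (F_y \cap X_i)$, the sets $Y_i$ cover $Y$. Rewriting $Y_i = Y \setminus f(X \setminus X_i)$ and invoking Chevalley's theorem on constructible sets, each $Y_i$ is constructible in $Y$. Since $Y$ is irreducible and is covered by finitely many constructible subsets, one of them, say $Y_1$, must have closure equal to $Y$, and therefore contains a non-empty open subset $U \subseteq Y$. By construction, $f^{-1}(U) \subseteq X_1$.

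Finally I derive the contradiction. For any index $j \ne 1$, the dominance of $f_j$ gives that $f(X_j)$ meets the open set $U$, so $X_j \cap f^{-1}(U)$ is a non-empty open subset of the irreducible variety $X_j$. Since this open subset is contained in the closed set $X_1$, irreducibility of $X_j$ forces $X_j \subseteq X_1$, contradicting the fact that $X_j$ and $X_1$ are distinct irreducible components. Hence $r = 1$, and $X$ is irreducible. The main technical point, and the place where both hypotheses (equidimensionality of $X$ and constancy plus irreducibility of the fibres) are genuinely used, is the first step: without irreducibility of fibres one cannot collapse each $F_y$ into a single component, and without equidimensionality plus constant fibre dimension one cannot force every $f_i$ to be dominant. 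Once that is in hand, the constructibility argument and the final contradiction are both soft.
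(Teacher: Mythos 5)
The paper does not actually prove this lemma; it is cited from \cite{CB2} (Lemma~6.1) and used as a black box, so there is no ``paper's proof'' to compare yours against. Your argument is correct and is essentially the standard proof of this fact: decompose $X$ into irreducible components, use irreducibility of each fibre to pin it inside one component, use Chevalley's theorem to show the sets $Y_i=\{y: f^{-1}(y)\subseteq X_i\}$ are constructible (hence one contains a dense open $U$ with $f^{-1}(U)\subseteq X_1$), use the fibre-dimension theorem plus equidimensionality and constant fibre dimension to force every $f|_{X_j}$ to be dominant, and then observe that $X_j\cap f^{-1}(U)$ is a non-empty open subset of the irreducible $X_j$ contained in the closed set $X_1$, giving $X_j\subseteq X_1$, a contradiction with maximality of components.

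The one thing worth fixing in the write-up is the logical order. In your ``first move'' you assert that ``pigeon-holing over a generic non-empty open subset of $Y$, one component $X_{i_0}$ must satisfy $F_y\subseteq X_{i_0}$ for all generic $y$,'' and you use this to get $d=m+n$ and dominance of all $f_i$. But the fact that some $Y_{i_0}$ contains a non-empty \emph{open} (rather than merely being dense) is exactly what the Chevalley/constructibility step buys you, and you only carry that step out afterwards. As stated, step 5 silently uses step 7--9. The cure is simply to reorder: define the $Y_i$, prove they are constructible and cover $Y$ so that some $Y_{i_0}$ contains a dense open $U$; from $U$ deduce $\dim X_{i_0}=m+n$ (hence $d=m+n$ by equidimensionality); then run your non-dominance contradiction for the remaining $f_j$; then finish with the $X_j\subseteq X_{i_0}$ contradiction. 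With that rearrangement the proof is clean and complete, and it does use both hypotheses in exactly the places you identify.
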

\begin{theorem}
\la{T6} For each $ n \geq 0 $ and $ [\I]\in \Pic(X) $, $\,
\CC_{n}(X,\I) \,$ is a smooth affine irreducible variety of
dimension $ 2n $.
\end{theorem}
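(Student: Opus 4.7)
The plan is to combine the moment-map description of $\CC_n(X, \I)$ (via Theorem~\ref{repcot}) with Lemma~\ref{irredu} for the irreducibility. For smoothness, I observe that by Lemma~\ref{LL6}(2), $B = A[\I]$ is smooth, so $R := T_B \DDer(B)$ is smooth and, by Theorem~\ref{repcot}, $\Rep(R, V)$ is canonically the cotangent bundle of $\Rep(B, V)$. The variety $\Rep_S(\Pi^\blambda(B), \bn)$ is cut out of $\Rep_S(R, \bn)$ by the moment-map equation $\mu = \blambda$. At every point $[\bV]$ of $\CC_n(X, \I)$ the module $\bV$ is simple by Proposition~\ref{L11}, so $\End_R(\bV) = \End_{\Pi^\blambda(B)}(\bV) \cong \c$. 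Proposition~\ref{sm} (applied with $A$ replaced by $B$) then forces $\mu^{-1}(\blambda)$ to be smooth at $\bV$, and combining with Theorem~\ref{salg} gives smoothness of $\Rep_S(\Pi^\blambda(B), \bn)$. Simplicity also makes the $\G_S(\bn)$-stabilizer trivial, so $\CC_n(X, \I)$ is smooth; it is affine as a GIT quotient of an affine variety.

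For the dimension, symplectic reduction at free orbits gives $\dim \CC_n(X, \I) = 2[\dim \Rep_S(B, \bn) - \dim \G_S(\bn)]$. The standard Euler-form computation for a representation variety of a smooth algebra of global dimension $\leq 1$ (which holds for $B$ under the hypotheses of Lemma~\ref{euler}) reduces the bracketed quantity to $1 - \chi_B(\bn, \bn)$, and by Lemma~\ref{Lchi} this equals $1 - (1 - n) = n$. Hence $\dim \CC_n(X, \I) = 2n$.

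For irreducibility I follow \cite{CB2} and apply Lemma~\ref{irredu} to the morphism $p : \CC_n(X, \I) \to S^n X$ sending $[\bV]$ to the support cycle $\sum_{x \in X} \dim_\c(V_x) \cdot x$ of $V$ viewed as an $A$-module. The target is an irreducible variety of dimension $n$, and $p$ is surjective by a construction generalizing that of Proposition~\ref{L11}. Over a reduced divisor $D = x_1 + \cdots + x_n$ with distinct $x_i$, the module $V$ is forced to be $\bigoplus_i A/\m_{x_i}$, the surjection $\I \to V$ is unique modulo $\G_S(\bn)$, and by Proposition~\ref{liftfd} together with Lemma~\ref{Lchi} the choice of $\Pi^\blambda(B)$-lifting is a torsor under $\Ext^1_B(\bV, \bV)^* \cong \c^n$. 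Thus the generic fibre of $p$ is an irreducible affine space of dimension $n$, matching $\dim \CC_n(X, \I) - \dim S^n X = n$.

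The main obstacle is to show that \emph{every} fibre of $p$, including those over non-reduced divisors $D = \sum n_i x_i$, is irreducible of dimension $n$. Over such $D$ the $A$-module $V$ can take several isomorphism types (indexed by partitions of the multiplicities $n_i$), yielding a stratification of the fibre. Following \cite{CB2}, the idea is to localize at each $x_i$ (where the local ring of $X$ is a DVR), reduce to a quiver-theoretic statement over the completion, and use a specialization/deformation argument to show that all strata lie in the closure of the generic one and assemble into a single irreducible fibre of dimension $n$. Once this is established, Lemma~\ref{irredu} yields the irreducibility of $\CC_n(X, \I)$.
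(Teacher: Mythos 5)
The smoothness and dimension parts of your argument track the paper's almost verbatim: both use Proposition~\ref{L11} to get simplicity and hence trivial stabilizers, then reduce smoothness of $\Rep_S(\Pi^\blambda(B),\bn)$ to Proposition~\ref{sm} and Theorem~\ref{salg}, and both extract $\dim \CC_n(X,\I)=2n$ from the Euler form via Lemma~\ref{Lchi}. The irreducibility argument, however, is a genuinely different route, and it is exactly there that your proposal has a real gap.

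The paper applies Lemma~\ref{irredu} only to fibrations whose fibres it can identify completely. First, to $f\colon \Rep_S(B,\bn)\to\Rep(A,n)$, whose fibres are the vector spaces $\Hom_A(\I,V)\cong\c^n$ — always irreducible and of constant dimension $n$, with no case analysis. This gives irreducibility of $\Rep_S(B,\bn)$. Then, to $\pi^{-1}(\U)\to\U$, where $\U$ is the open dense locus of $B$-modules $\bV$ with $\End_B(\bV)\cong\c$: here the fibres are cosets of $\Ext^1_B(\bV,\bV)^*$, of constant dimension $n$ by Proposition~\ref{liftfd} and Lemma~\ref{Lchi}. A dimension count (lower-bounding every component of $\Rep_S(\Pi,\bn)$ by $n^2+2n$, upper-bounding $\pi^{-1}(\im\pi\setminus\U)$) shows $\pi^{-1}(\U)$ is dense, so $\Rep_S(\Pi,\bn)$ is irreducible, and the $\G_S(\bn)$-quotient is taken at the very end.

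You instead propose to apply Lemma~\ref{irredu} directly to the support map $p\colon\CC_n(X,\I)\to S^nX$. That lemma requires \emph{all} fibres to be irreducible of \emph{constant} dimension, not just the generic one, and — as you yourself flag — you have not established this over non-reduced divisors. The sketch you give (localize at each $x_i$, reduce to a quiver statement over the completion, argue by specialization to push non-generic strata into the closure of the generic one) is the shape of an argument, not an argument: it neither shows irreducibility of the fibre nor that its dimension is exactly $n$ rather than dropping, and Lemma~\ref{irredu} needs both. The appeal to \cite{CB2} is aspirational, since what that paper proves in the quiver setting does not transfer to the fibres of $p$ without substantial work. This is a genuine gap, and it is precisely the difficulty that the paper's choice of fibrations is designed to avoid: by working with $f$ and with $\pi$ restricted over $\U$, the paper never has to analyse fibres over non-reduced support cycles at all.
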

\begin{proof}
The varieties $ \CC_n(X,\I) $ are affine by definition; we need
only to show that these are smooth and irreducible. Fix $ n \in \N $
and $ [\I] \in \Pic(X) $. To simplify the
notation write $\, \Pi \,$ for $\, \Pi^\blambda(B) \,$ with $
\blambda = (1,-n) $. Then, by Proposition~\ref{L11}, every
$\Pi$-module $ \bV $ of dimension $\,\bn = (n,1) \,$ is simple,
so, by Schur Lemma, $\, \End_{\Pi}(\bV) \cong \c\,$ and $
\,\Aut_{\Pi}(\bV) \cong \c^* $. The last isomorphism implies that
every point of $ \Rep_S(\Pi, \bn) $ has trivial stabilizer in $
\G_S(\bn) $, i.~e. the natural action of $ \GL(\bn) $ on $
\Rep_S(\Pi, \bn) $ is free. In that case, by Luna's Slice Theorem
(see \cite{Lu}, Corollaire~III.1.1), the quotient variety $\,
\CC_n(X,\I) = \Rep_S(\Pi, \bn)/\!/\,\G_S(\bn) \,$ will be smooth
if so is the original variety $\, \Rep_S(\Pi, \bn)\,$. Now, to see
that $\, \Rep_S(\Pi, \bn)\,$ is smooth, it suffices to see, by
Theorem~\ref{salg}, that $\, \Rep(\Pi, \bn)\,$ is smooth, and that
follows from Proposition~\ref{sm} of Section~\ref{modvar}. In
fact, let $ R := T_B \DDer(B) $, and let $\, \sigma:\, R \onto
\Pi \,$ be the canonical projection. Then $ \sigma $ induces the
closed embedding of affine varieties $ \,\sigma_*:\,\Rep(\Pi, \bn)
\into \Rep(R, \bn)\,$, whose image is a fibre of the evaluation
map \eqref{eval}. Since for every $\, \varrho \in \im(\sigma_*)
\,$, we have $\, \End_R(\bV) \cong \End_{\Pi}(\bV) \cong \c \,$,
the assumption of Proposition~\ref{sm} holds, and the result
follows.

Now, we show that $\,\CC_n(X, \I)\,$ is irreducible of dimension
$\,2n\,$. For this, we examine first the varieties $ \Rep_S(B,
\bn) $ and $ \Rep_S(\Pi, \bn) $. Since $B$ is smooth, $ \Rep_S(B,
\bn) $ is smooth, i.~e. for every point $\, \varrho \in \Rep_S(B,
\bn) \,$, we have
\begin{equation}
\la{dimrep} \dim_{\varrho}\, \Rep_S(B, \bn) = \dim_{\c}
T_\varrho\,\Rep_S(B, \bn)\ ,
\end{equation}
where $\,\dim_\varrho\,$ stands for the local dimension and $\,
T_\varrho\, \,$ for the Zariski tangent space of $ \Rep_S(B, \bn)
$ at $ \varrho $. To evaluate the dimension of this space
we identify $\, T_\varrho\,\Rep_S(B, \bn) \cong \Der_S(B, \End\,\bV) \,$,
as in \eqref{tgd}, and consider the standard exact sequence
$$
 0 \to \End_B(\bV) \to \End_S(\bV) \to \Der_S(B, \,\End\,\bV) \to
\HH^1(B, \,\End\,\bV) \to 0\ .
$$
Identifying now terms in this sequence
$\, \End_S(\bV) \cong \Mat(n, \c)\times \c \,$, $\, \HH^1(B,
\,\End\,\bV) \cong  \Ext^1_{B}( \bV,\,\bV) \,$ (see \cite{CE},
Prop.~4.3, p.~170), and using Lemma~\ref{Lchi}, we get
\begin{equation}
\la{dimder} \dim_\c \Der_S(B,\,\End\,\bV) = n^2 + 1 - \chi_B(\bV,
\bV) =  n^2 + n \ .
\end{equation}
Thus $\, \Rep_S(B, \bn) \,$ is a smooth equidimensional variety of
dimension $\, n^2 + n \,$. To see that it is actually irreducible,
we apply Lemma~\ref{irredu} to the canonical projection $\, f:\,
\Rep_S(B, \bn) \to \Rep(A, n) \,$. In this case, the assumptions
of Lemma~\ref{irredu} are easy to verify: since $X$ is
irreducible, so is clearly $\,\Rep(A,n)\,$, and the fibres of  $\,
f \,$ over each $\, V \in \Rep(A,n)\,$ can be identified with the
vector spaces $\,\Hom_A(I, V) \,$ and, hence, are all irreducible
of the same dimension $\, n \,$, by formula \eqref{homA}.

Next, we consider the restriction map $\,\pi:\, \Rep_S(\Pi, \bn)
\to \Rep_S(B, \bn) \,$. As remarked above, the image of $\,\pi \,$
consists exactly of indecomposable modules in $\, \Rep_S(B,
\bn)\,$, while each (non-empty) fibre $\, \pi^{-1}(\bV) \,$ is
isomorphic, by Proposition~\ref{liftfd}, to a coset of
$\,\Ext^1_B(\bV, \bV)^* \,$ and is thus irreducible of dimension
\begin{equation}
\la{dimfib} \dim\,\pi^{-1}(\bV) = \dim_\c \End_B(\bV) -
\chi_B(\bV,\bV) = \dim_\c \End_B(\bV) + n - 1\ .
\end{equation}
Now, let $\, \U \, $ be the subset of $ \,\Rep_S(B, \bn) \,$
consisting of modules $ \bV $ with $\, \End_B(\bV) \cong \c\,$. As
explained in Remark~2 (after Proposition~\ref{L11}), this subset
is non-empty. By Chevalley's Theorem (see, e.g., \cite{CB3}, p.
15), the function $\, \bV \mapsto \dim_\c \End_B(\bV)\,$ is upper
semi-continuous on $\, \Rep_S(B, \bn) \,$, i.~e.
$$
 \{\bV \in \Rep_S(B, \bn) \,:\, \dim_\c \End_B(\bV,\bV) \geq n \}
$$
are closed sets for all $\, n \in \N \,$. Hence $\, \U \,$ is open
in $\, \Rep_S(B, \bn) \,$ and therefore dense, since $\, \Rep_S(B,
\bn) \,$ is irreducible. As $\, \U \subseteq \im(\pi) \,$, this
implies that $\, \pi \,$ is dominant.

Now, $ \pi^{-1}(\U) $ is an open subset of $ \Rep_S(\Pi, \bn) $,
whose local dimension at every point $\, \varrho \in
\pi^{-1}(\U)\,$ is equal, by \eqref{dimfib}, to
$$
\dim_\varrho \,\pi^{-1}(\U)\, = \dim\,\U +
\dim\,\pi^{-1}(\pi(\varrho)) = \dim\,\Rep_S(B,\bn) + n =  n^2 +
2n\ .
$$
Thus  $\, \pi^{-1}(\U)\,$ is equidimensional and therefore, by
Lemma~\ref{irredu}, irreducible. We claim that $\, \pi^{-1}(\U)
\,$ is  dense in $\,\Rep_S(\Pi, \bn) \,$. Indeed, since $ \im(\pi)
$ coincides with the set of indecomposable $B$-modules in $\,
\Rep_S(B, \bn) \,$, we have
$$
\dim\, \pi^{-1}(\im(\pi)\!\setminus \U) < \dim\, \pi^{-1}(\U) =
n^2 + 2n\ .
$$
On the other hand,  the variety $\,\Rep_S(\Pi, \bn) \,$ can be
identified with a fibre of the evaluation map $\, \mu: \,\Rep_S(R,
\bn) \to \End_S(\bV)_0 \,$, see \eqref{eval}, so any irreducible
component of it has dimension at least
$$
\dim\,\Rep_S(R, \bn) - \dim\, \End_S(\bV)_0 = 2(n^2 + n) - n^2 =
n^2 + 2n \ .
$$
(Here, we calculated $\, \dim\,\Rep_S(R, \bn)\,$ using the identification
of Theorem~\ref{repcot}: $\ \Rep_S(R, \bn) \cong T^*\Rep_S(B, \bn)\,$.)
Thus, $\,\Rep_S(\Pi, \bn)\,$ must coincide with the closure of
$\,\pi^{-1}(\U)\,$, and hence  is also irreducible of dimension
$\, n^2 + 2n $. This certainly implies the irreducibility of $\,
\CC_n(X,\I) \,$, since $\, \CC_n(X,\I) \,$ is a quotient of
$\,\Rep_S(\Pi, \bn) \,$ by a free action of $ \G_S(\bn) $.

Finally, we have $\, \G_S(\bn) \cong [\,\GL(n, \c)\times
\GL(1,\c)\,]/\c^* \cong \GL(n,\c)\,$, so
$$
\dim \,\CC_n(X, \I) = \dim\,\Rep_S(\Pi,\,\bn) - \dim\,\G_S(\bn) = n^2
+ 2n - n^2 = 2n\ .
$$
This completes the proof of the theorem.
\end{proof}

\section{The Calogero-Moser Correspondence}
\la{CMMap}

\subsection{Recollement}
\la{Re} We begin by clarifying the relation between
the algebras $ \Pi^\blambda(B) $ and the ring $\D$ of
differential operators on $X$ (see also Appendix).
\blemma
\la{recoll}
There is a canonical map $\, \btheta:
\Pi^{\blambda}(B) \to \Pi^1(A) \,$, which is a surjective
pseudo-flat ring homomorphism, with $\, \Ker(\btheta) = \langle\,
\ei\,\rangle\,$.
\elemma
\begin{proof}
By Lemma~\ref{LL6}$(4)$, the projection $\, i:\, B
\to A \,$, see \eqref{E17}, is a flat (and hence, pseudo-flat)
ring epimorphism. Since $B$ is smooth, by Theorem~\ref{TCB1},
$\,i \,$ extends to an algebra map $\, \btheta:
\Pi^{\blambda}(B) \to \Pi^{i^*(\blambda)}(A)\,$, which is
also a pseudo-flat ring epimorphism. Now, since $ i $ is
surjective with $ \Ker(i) = \langle \ei\rangle $, the Cartesian
square \eqref{DD5} shows that $ \btheta $ is surjective and
$ \Ker(\btheta) = \langle \ei  \rangle $.
Finally, with identifications of Section~\ref{GCMS},
it is easy to see that $\, \btheta^*(\blambda) = 1 \,$.
\end{proof}
\bthm[\cite{CB}, Theorem~4.7]
\la{TCB2} If  $ A = \O(X) $ is the
coordinate ring of a smooth affine curve, then $ \Pi^1(A) $ is
isomorphic (as a filtered algebra) to $ \D = \D(X) $.
\ethm
We fix, once and for all, an isomorphism of Theorem~\ref{TCB2}
to identify $\, \D = \Pi^{1}(A)\,$. In combination with Lemma~\ref{recoll},
this yields an algebra map $\,\btheta:\,\Pi \to \D \,$. We will
use $ \btheta $ to relate the (derived) module categories of $ \Pi
$ and $ \D $, as follows (cf. \cite{BCE}).
First, we let $\, U \,$ denote the endomorphism ring of the
projective module $\, \ei \Pi \,$: this ring can be identified
with the associative subalgebra $\, \ei \Pi\, \ei \,$ of $\,\Pi
\,$ having $ \ei $ as an identity element. Next, we introduce six
additive functors $ (\btheta^*, \,\btheta_*,\, \btheta^{!}) $ and
$ (j_!, \,j^*,\, j_*) $ between the module categories of $ \Pi $,
$\,\D$ and $ U $. We let $\, \btheta_*:\,\Mod(\D)\to \Mod(\Pi)
\,$ be the restriction functor associated to
$\, \btheta \,$. This functor is fully faithful and
has both the right adjoint $\, \btheta^! := \Hom_{\Pi}(\D,\,
\mbox{---}\,)\,$ and the left adjoint $\, \btheta^* := \,\D
\otimes_{\Pi} \mbox{---}\,$, with adjunction maps $\, \btheta^*
\btheta_* \simeq \id \simeq \btheta^!\,\btheta_* \,$ being
isomorphisms. Now we define $\, j^*:\, \Mod(\Pi) \to \Mod(U) \,$
by $\, j^*\bV := \ei \bV \,$. Since $ e_\infty \in \Pi $ is an
idempotent, $ j^* $ is exact and has also the right and left
adjoints: $\, j_* := \Hom_{U}(e_\infty \Pi,
\,\mbox{---}\,)\,$ and  $\, j_! := \Pi \ei \otimes_{U}
\mbox{---}\,$, satisfying $\, j^*j_* \simeq \id
\simeq j^* j_! \,$.
The six functors $ (\btheta^*, \,\btheta_*,\, \btheta^{!}) $ and $
(j_!, \,j^*,\, j_*) $ defined above extend to 
the derived categories:
\begin{equation}
\la{Dia1}
\begin{diagram}[small, tight]
\DC^{b}(\Mod\, \D) \ & \ \pile{\lTo^{\btheta^*}\\
\rTo^{\btheta_{\!*}} \\
\lTo^{\btheta^!}} \ &
\ \DC^{b}(\Mod\, \Pi)\ & \ \pile{\lTo^{j_!}\\
\rTo^{j^*}\\ \lTo^{j_*}} \ & \ \DC^{b}(\Mod\, U) \\
\end{diagram}\ ,
\end{equation}
and their properties of these functors can be summarized in the following way (cf. \cite{BBD}, Sect.~1.4).
\bthm
\la{P1}
The diagram \eqref{Dia1} is a recollement of triangulated categories.
\ethm
Theorem~\ref{P1} follows from general results on recollement
of module categories (see \cite{Ko}) and the following
observation, which will be proved in
Section~\ref{strP} (see Lemma~\ref{Lrec1}): the multiplication map
$\,\Pi\,\ei \otimes_U \ei \Pi \to \Pi  \,$ fits into the exact
sequence
\begin{equation}
\la{scom}
0 \to \Pi\,\ei \otimes_U \ei \Pi \to \Pi \xrightarrow{\btheta} \D
\to 0\ ,
\end{equation}
which is a projective resolution of $ \D $ in the category of
(left and right) $\Pi$-modules. The existence of
\eqref{scom} implies that $\D$ has projective dimension $ 1 $ in $
\Mod(\Pi)$. Hence $\,\Tor^{\Pi}_n(\D, \D) = 0\,$ for all $\,n \ge
2 \,$. On the other hand, by Lemma~\ref{recoll}, $\,\btheta \,$ is
a pseudo-flat epimorphism, meaning that
$\,\Tor^{\Pi}_1(\D, \D) = 0 \,$ as well. Theorem~\ref{P1}
follows now from \cite{Ko}, Cor~14. As another consequence of \eqref{scom},
we have
\blemma 
\la{simp} If $ \bV $ is a finite-dimensional $\Pi$-module,
then $\, L_n \btheta^* (\bV) =  0 \,$ for $\, n \not= 1 \,$ and
\begin{equation}
\la{CMf}
L_{1} \btheta^* (\bV) \cong \Ker\left[\,\Pi\,
\ei \otimes_{U} \ei \bV \xrightarrow{\bmu} \bV \right],
\end{equation}
where $ L_n \btheta^* $ denotes the $n$-th derived functor of $ \btheta^* $
and $ \bmu $ is the natural multiplication-action map.
\elemma
\bproof Tensoring \eqref{scom} with $ \bV $ yields the exact
sequence
\begin{equation*}
\la{torr} 0 \to \Tor^{\Pi}_{1}(\D, \bV) \to \Pi \ei \otimes_U \ei
\bV \to \bV \to \D \otimes_{\Pi} \bV \to 0\ ,
\end{equation*}
and isomorphisms
$\, \Tor^\Pi_n(\D, \bV) \cong \Tor^\Pi_{n-1}(\Pi\,\ei
\otimes_U \ei \Pi, \bV)\,$ for $ n\ge 2 $.
Since $\, \Pi\,\ei \otimes_U \ei \Pi \,$ is projective (as a right
$\Pi$-module), the last $ \Tor$'s vanish. On the other hand, $\,
\dim_\c\, \bV < \infty \,$ implies that $\,\D \otimes_\Pi \bV = 0
\,$, since $ \D $ has no nonzero finite-dimensional modules. The
result follows now from the identification
$\, L_{n} \btheta^*(\bV) = \Tor_{n}^{\Pi}(\D,\,\bV)\,$, $\,n\ge
0\,$. \eproof
\remark\  Using \eqref{Dia1}, we may define the following `perverse'
$t$-structure on $ \DC^{b}(\Mod\, \Pi) $:
$$
{}^p\DC^{\le 0}(\Mod\,\Pi) := \{\,K^\bullet \in
\DC^{b}(\Mod\,\Pi)\ :\ j^*K^\bullet \in \DC^{\le 0}(U)\ ,\
\btheta^*K^{\bullet} \in \DC^{\le -1}(\D) \,\}\ ,
$$
$$
{}^p\DC^{\ge 0}(\Mod\,\Pi) := \{\,K^\bullet \in
\DC^{b}(\Mod\,\Pi)\ :\ j^*K^\bullet \in \DC^{\ge 0}(U)\ ,\
\btheta^! K^{\bullet} \in \DC^{\ge -1}(\D) \,\}\ ,
$$
where $\,\{\DC^{\le 0}(U),\, \DC^{\ge 0}(U)\}\,$ and $\,\{\DC^{\le
0}(\D),\, \DC^{\ge 0}(\D)\}\,$ denote the standard $t$-structures
on $ \DC^{b}(\Mod\,U) $ and $ \DC^{b}(\Mod\,\D) $ respectively.
Lemma~\ref{simp} shows that the $0$-complexes $\,[\,0 \to \bV \to
0\,]\,$ with $\,\dim_\c{\bV} < \infty \,$ lie in the heart of this
$t$-structure. So we may think of finite-dimensional $\Pi$-modules
as `perverse sheaves' with respect to the stratification \eqref{Dia1}.
The functor $ \btheta^* $ is then an algebraic analogue of
the restriction functor of a (perverse) sheaf to a closed subspace.

\subsection{The action of $ \Pic(\D) $ on Calogero-Moser spaces}
\la{ad}
We recall some facts about the Picard group $
\Pic(\D) $ of the algebra $ \D $ and its action on the space of
ideals $ \R(\D)$ (see \cite{BW}). It is known that $ \Pic(\D) $
has different descriptions for $ X = \A^1 $ and other curves (see \cite{CH1}).
Since the case of $ \A^1 $ is well studied, we will assume
that $ X \ne \A^1 $. Our
main theorem (Theorem~\ref{Tmain}) still holds for all curves $X$,
including $ \A^1 $.

In general, $\, \Pic(\D) $ can be identified with the group of $
\c$-linear auto-equivalences of the category $ \Mod(\D)$, and thus
it acts naturally on $ \R(\D) $ and $ K_0(\D) $. To be precise,
the elements of $\, \Pic(\D) \,$ are the isomorphism classes $
[\P] $ of invertible $\D$-bimodules, and the action of $
\Pic(\D) $ on $ \R(\D) $ and $ K_0(\D) $ is defined by $\, [M]
\mapsto [\P \otimes_{\D} M]\,$. The action of $ \Pic(\D)
$ on $ K_0(\D) $ preserves rank and hence restricts to $ \Pic(X) $
through the identification $\,K_0(\D) \cong K_0(X) \cong \Z \oplus
\Pic(X) \,$, see Section~\ref{SCI}.
\bprop[see \cite{BW}, Theorem~1.1] \la{piceq} $\, \Pic(\D) $ acts
on $ \Pic(X) $ transitively, and the map $ \gamma:\,\R(\D) \to
\Pic(X) $ defined by \eqref{E10} is equivariant under this action.
\eprop

Explicitly,  the action of $ \Pic(\D) $ on $ \Pic(X) $ can be
described as follows (cf. \cite{BW}, Prop.~3.1). By \cite{CH1},
Cor.~1.13, every invertible bimodule over $ \D $ is isomorphic to
$\, \D\L = \D \otimes_A \L \,$ as a left module, while the right
action of $ \D $  is determined by an algebra
isomorphism $\, \vp:\, \D \stackrel{\sim}{\to} \End_{\D}(\D\L)
\,$, where $\, \L \,$ is a line bundle on $X$. Following
\cite{BW}, we denote such a bimodule by $ \,  (\D\L)_{\vp} \,$.
Restricting $ \vp $ to $A$ yields an automorphism of $X$,
and the assignment
\begin{equation}
\la{Egr}
g:\,\Pic(\D) \to \Pic(X) \rtimes \Aut(X)\ ,\quad
[\, (\D\L)_{\vp}] \mapsto ([\L],\, \vp |_A)\ ,
\end{equation}
defines then a group homomorphism. On the other hand, $\,
\Pic(X) \rtimes \Aut(X)\,$ acts on $ \Pic(X) $ in the obvious way:
\begin{equation}
\la{E13}
([\L],\tau) : \, [\,\I\,] \mapsto [\,\L \,\tau(\I)\,] \ ,
\end{equation}
where $\, ([\L],\tau) \in  \Pic(X) \rtimes \Aut(X)\,$ and $\, [\I]
\in \Pic(X) \,$. Combining \eqref{Egr} and \eqref{E13},
we get an action of $ \Pic(\D) $ on $ \Pic(X) $, which agrees with
the natural action of $ \Pic(\D)$ on $ K_0(\D)$.

Now, given a line bundle $ \I $ and an invertible bimodule $\P =
(\D\L)_{\vp}\, $, we define $\,
P := \tilde{\L} \otimes_{\tA} B_{\tau} \,$, where
$\, \tilde{\L} := \L \times \c \,$, $\, \tau := \vp|_A \,$,
and $\, B_{\tau} := A[\tau(\I)] \,$. By Lemma~\ref{L10}$(a)$, $\,
P $ is a progenerator in the category of right $ B_\tau$-modules,
with $\, \End_{B_\tau}(P) \cong A[\J] \,$,
where $\, \J := \L\,\tau(\I) \,$. Associated to $ \P $ is thus the
Morita equivalence: $\, \Mod(B_\tau) \stackrel{\sim}{\to} \Mod(A[\J])\,$,
$\, \bV \mapsto P \otimes_{B_\tau} \bV \cong \tilde{\L} \otimes_{\tA} \bV\,$.

Next, we extend $\, P \,$ to the $\Pi^\blambda(B_\tau)$-module $\,
\bP := P \otimes_{B_\tau} \Pi^\blambda(B_\tau) \cong
\tilde{\L} \otimes_{\tA} \Pi^\blambda(B_\tau)$,
which is clearly a progenerator in the category of right $
\Pi^\blambda(B_\tau)$-modules. By Lemma~\ref{L10}$(b)$, $\,\tau\,$
defines an isomorphism $ B_\tau \stackrel{\sim}{\to} B $. Since $\,\tau(\blambda) = \blambda \,$
for all $\,\blambda \in S \,$, this isomorphism canonically extends to deformed
preprojective algebras $\,\ttau:\,\Pi^\blambda(B)
\stackrel{\sim}{\to} \Pi^\blambda(B_\tau) \,$, which allows us to
regard $ \bP $ as a $ \Pi^\blambda(B)$-module and identify
\begin{equation}
\la{endp}
\End_{\Pi^\blambda(B)}(\bP) \cong
\tilde{\F} \otimes_{\tA} \Pi^\blambda(B)
\otimes_{\tA}\tilde{\F}^\vee\  ,\quad \F := \L^\tau\ .
\end{equation}
With this identification, we have the embedding
\begin{equation}
\la{lso}
\ttau^{-1}:\ A[\J] \into \End_{\Pi^\blambda(B)}(\bP)\ ,
\end{equation}
and, since $\,\End_{\D}(\F \D) \cong \tilde{\F} \otimes_{\tA} \D \otimes_{\tA} \tilde{\F}^\vee
\,$, the natural map
\begin{equation}
\la{lso1}
1 \otimes \btheta \otimes 1 :\ \End_{\Pi^\blambda(B)}(\bP) \to
\End_{\D}(\F \D)\ .
\end{equation}
On the other hand, $\, \vp(\D) = \End_{\D}(\D\L) = \L^\vee \D  \L \, $ implies
$\,\D = \L\,\vp(\D)\,\L^\vee\,$, so taking
the inverse defines an isomorphism $\, \psi := \vp^{-1}:\ \D \to \F \,\D \,\F^\vee =
\End_{\D}(\F \D)\,$. Combining this last isomorphism with \eqref{lso1}, we get the
diagram of algebra maps
\begin{equation}
\la{pil}
\begin{diagram}
 \Pi^\blambda(A[\J])   &  \rDotsto{\bvp \ } & \End_{\Pi^\blambda(B)}(\bP) \\
\dTo^{\btheta}          &       & \dTo_{1 \otimes \btheta \otimes 1} \\
\D       & \rTo^{\psi\quad}       & \End_{\D}(\F \D) \\
\end{diagram}
\end{equation}
which obviously commutes when the dotted arrow is restricted to \eqref{lso}.

\bprop \la{Lex} There is a unique algebra isomorphism $\,\bvp\,$ extending
\eqref{lso} and making \eqref{pil} a commutative diagram. \eprop
We postpone the proof of Proposition~\ref{Lex} until
Section~\ref{st4}. Meanwhile, we note that the isomorphism $\,
\bvp\,$ makes $ \bP $ a left $ \Pi^\blambda(A[\J])$-module and
thus a progenerator from $ \Pi^\blambda(A[\I])$ to $
\Pi^\blambda(A[\J]) $. This assigns to $ \P = (\D\L)_{\vp} $ the
Morita equivalence:
$$
\Mod\,\Pi^\blambda(A[\I]) \to \Mod\,\Pi^\blambda(A[\J])\ ,\quad
\bV \mapsto \bP \otimes_{\Pi} \bV\ ,
$$
which, in turn, induces an isomorphism of representation varieties
\begin{equation}
\la{eigg}
f_{\P}:\ \CC_n(X, \I) \stackrel{\sim}{\to} \CC_n(X, \J)\ .
\end{equation}
\begin{remark} We warn the reader that
\eqref{eigg} {\it depends} on the choice of a specific
representative in the class $\, [\P] \in \Pic(\D) \,$, so, in
general, we do not get an action of $ \Pic(\D) $ on
$\,\bigsqcup_{[\I] \in \Pic(X)}\,\CC_n(X,\I)\,$. However, we will
see below (Proposition~\ref{omeac}) that $ f_{\P} $ induces a
well-defined action of $ \Pic(\D) $ on the reduced spaces $ \overline{\CC}_n(X,\I) $.
\end{remark}

Next, we describe a natural action of the canonical bundle $ \Omega^1 X $
on $ \CC_{n}(X, \I) $. Recall that the group homomorphism \eqref{Egr} is surjective
and fits into the exact sequence (see
\cite{CH1}, Theorem~1.15)
\begin{equation}
\la{exgr}
1 \to \Lambda \xrightarrow{\dlog} \Omega^1 X
\xrightarrow{c} \Pic(\D)\xrightarrow{g}
\Pic(X) \rtimes \Aut(X) \to 1\ ,
\end{equation}
where $\,\Lambda := A^\times\!/\c^\times\,$ is the multiplicative
group of (nontrivial) units in $ A $. The maps $\dlog $ and $ c $
in \eqref{exgr} are defined by
\begin{equation}
\la{unit}
{\tt dlog}:\ \Lambda \to \Omega^1 X\ , \ u \mapsto u^{-1} du\quad ,
\quad
c:\, \Omega^1 X \to \Pic(\D)\ ,\
\omega \mapsto [\,\D_{\hsigma_\omega}]\ ,
\end{equation}
where $\,\hsigma_\omega \,$ is the
automorphism of $\D$ acting identically on $ A $ and mapping $\,\partial
\in \Der(A) \,$ to $\, \omega(\partial) +
\partial \in \D_1 \,$. Since the action of $ \Pic(\D)$ on $
\Pic(X) $ factors through $ g $, the image of $ \Omega^1 X $ in $
\Pic(\D) $ under $ c $ stabilizes each point of $\, \Pic(X) \,$,
and therefore, by equivariance of $ \gamma $, preserves every
fibre $\, \gamma^{-1}[\I] \subseteq \R(\D) \,$. Thus, writing $
\Gamma := \Omega^1(X)/\Lambda \,$ and identifying $ \Gamma$ with $
\im(c) $, we get an action
\begin{equation}
\la{actfib}
\Gamma \times \gamma^{-1}[\I] \to \gamma^{-1}[\I]\ ,
\quad [\I] \in \Pic(X)\ .
\end{equation}

Now, let $\, \DB := \Omega^1(B)/[B, \,\Omega^1 B]\,$, where $\, B := A[\I] $.
Using the fact that $ B $ is smooth, we identify
\begin{equation*}
\la{Kar} \DB \cong B \otimes_{\eB} \Omega^1(B) \cong B
\otimes_{\eB} (\Omega^1 B)^{\star\star} \cong \Hom_{\eB}((\Omega^1
B)^{\star},\,B)\ ,
\end{equation*}
where  $\,
(\mbox{---} \,)^\star \,$ stands for the dual over $ \eB $.
Explicitly, under this identification, $\, \bar\omega
 = \omega\,(\mbox{\rm mod}\,[B, \,\Omega^1 B])\in \DB \,$ corresponds
to the homomorphism
\begin{equation}
\la{muop}
\hat{\omega}:\,\Omega^1(B)^{\star} \to B\ , \quad \delta \mapsto
\mu^{\mbox{\tiny o}}\,[\delta(\omega)]\ ,
\end{equation}
where $\,\mu^{\mbox{\tiny o}}:\, \eB \to B \,$ is the opposite multiplication map.
The additive group $\, \DB \,$ acts naturally on  $\, T_B\, (\Omega^1 B)^{\star} $:
for $\, \bar{\omega} \in \DB \,$, we have an automorphism $\,
\tsigma_\omega \,$ of $\, T_B\, (\Omega^1 B)^{\star}\,$ acting identically on $ B $
and mapping
$$
(\Omega^1 B)^{\star} \to B
\oplus (\Omega^1 B)^{\star} \into T_B\, (\Omega^1 B)^{\star}\ ,\quad
\delta \mapsto \hat{\omega}(\delta) + \delta \ .
$$
The assignment $\, \bar{\omega} \mapsto \tsigma_\omega \,$ defines then a group
homomorphism
\begin{equation}
\la{auto1} \tsigma: \DB \to \Aut_{B}[T_B\, (\Omega^1 B)^{\star}]\ .
\end{equation}

Identifying $ \Omega^1 X $ with the group of K\"ahler
differentials of $ A $, we now construct an embedding $ \Omega^1 X
\into \DB $. For this, we consider the exact sequence
\begin{equation}
\la{auto2}
0 \to \HH_1(B) \stackrel{\alpha}{\to} \DB \to B \to
\HH_0(B) \to 0\ ,
\end{equation}
obtained by tensoring $\, 0 \to \Omega^1(B) \to \eB \to B \to 0
\,$ with $ B $, and compose the connecting map $ \alpha $ in
\eqref{auto2} with natural isomorphisms (see \cite{Lo},
Th.~1.2.15 and Prop.~1.1.10, respectively)
\begin{equation}
\la{auto3} \HH_1(B) \cong \HH_1(A) \cong \Omega^1 X \ .
\end{equation}
Now, for any algebra $B$, we have (see \cite{CE}, Ex.~19, p.~126)
$$
\HH_1(B) = \Tor^{\eB}_1(B,\,B) \cong
\Omega^1(B)\,\cap\,\Omega^1(B)^{\circ}/\,\Omega^1(B) \cdot \Omega^1(B)^{\circ}\ ,
$$
where $\,\Omega^1(B)^{\circ} := \Ker (\mu^{\mbox{\tiny o}})\,$. Hence,
if $\, \bar{\omega} \in \im\,\alpha\,$ in \eqref{auto2}, then
$\, \Delta_B(\omega) \in
\Omega^1(B)\,\cap\,\Omega^1(B)^{\circ} \subseteq
\Omega^1(B)^{\circ} \,$, so by \eqref{muop}, $\,\hat{\omega}(\Delta_B) = 0\,$
and $\, \tsigma_{\omega}(\Delta_B) = \Delta_B\,$.
Thus, combining \eqref{auto1} with \eqref{auto2} and
\eqref{auto3}, we may define
\begin{equation}
\la{auto4}
\sigma:\ \Omega^1 X \stackrel{\alpha}{\into} \DB
\stackrel{\tsigma}{\to} \Aut_{B}[T_B\, (\Omega^1
B)^{\star}] \to \Aut_{S}[\Pi^\blambda(B)]\ ,
\end{equation}
where the last map is induced by the algebra projection: $\,T_B\,
(\Omega^1 B)^{\star} \onto \Pi^\blambda(B)\,$. An explicit
description of $ \tsigma $ will be given in Section~\ref{st3} (see
Lemma~\ref{actgen}).

Now, the group $\, \Aut_{S}[\Pi^\blambda(B)] \,$ acts on
$\,\Rep_S(\Pi^\blambda(B), \bn)\,$ in the natural way: if
$\,\varrho: \Pi^\blambda(B) \to \End(\bV)\,$ represents a point in
$\,\Rep_S(\Pi^\blambda(B), \bn)\,$, then $\,\sigma.\varrho =
\varrho \, \sigma^{-1} $ for $ \sigma \in \Aut_{S}
[\Pi^\blambda(B)]\, $. Clearly, this commutes with the $
\G_S(\bn)$-action on $\,\Rep_S(\Pi^\blambda(B), \bn)\,$, and hence
induces an action of $\, \Aut_{S}[\Pi^\blambda(B)] \,$ on $\,
\CC_{n} (X,\I) \,$. Restricting this last action to $ \Omega^1 X $
via \eqref{auto4}, we define
\begin{equation}
\la{act1} \sigma^*:\ \Omega^1 X \to \Aut\, [\,\CC_{n}(X, \I)\,]\ ,
\quad \omega \mapsto [\,\sigma^*_\omega:\,\varrho \mapsto
\varrho \, \sigma_\omega^{-1}\,] \ .
\end{equation}
Equivalently, $\,\sigma^*_\omega \,$ is defined on $ \CC_n(X,\I) $
by twisting the structure of $ \Pi^\blambda(B)$-modules by $
\sigma_\omega^{-1} $, i. e. $\,[\bV] \mapsto
[\bV^{\sigma_\omega^{-1}}]\,$. Restricting \eqref{act1} further to
$\, \Lambda \,$, via \eqref{unit}, we define the quotient varieties
\begin{equation}
\la{redcm}
\overline{\CC}_{n}(X,\I) := \CC_{n}(X,\I)/\Lambda\ .
\end{equation}
These varieties come equipped with the induced action of the group
$\, \Gamma = \Omega^1(X)/\Lambda \,$.

\bprop 
\la{omeac} $(1)$\ The action \eqref{act1} agrees with
\eqref{eigg}{\rm\,:\,} if $\,\P = \D_{\hsigma_\omega}\,$, then
$\, f_{\P} = \sigma^*_\omega \,$ for all $\,\omega \in \Omega^1 X\,$.

$(2)$ The map \eqref{eigg} induces an isomorphism of quotient
varieties
$\,
\bar{f}_{\P}:\ \overline{\CC}_{n}(X,\I) \stackrel{\sim}{\to}
\overline{\CC}_{n}(X,\J)\,$,
which depends only on the class of $ \,\P $ in $ \Pic(\D) $.
\eprop
We will prove Proposition~\ref{omeac} in Section~\ref{st4}. Here, we make only two
remarks.

1. It follows from Proposition~\ref{omeac} that the action of
$ \Lambda $ on $ \CC_{n}(X,\I) $ defined above coincides with the
natural action of $\, \Aut(\I) = A^\times \,$, so $
\overline{\CC}_{n}(X,\I) $ depends only on the class of $ \I $ in
$ \Pic(X) $ and the definition \eqref{redcm} agrees with the one
given in the introduction.

2. For each $\,n \ge 0\,$, let $\,\overline{\CC}_n(X)\,$ denote the
disjoint union of $\,\overline{\CC}_{n}(X,\I)\,$ over all $ [\I]\in\Pic(X)$.
By part $(2)$ of Proposition~\ref{omeac}, the assignment $\,[\P] \mapsto
\bar{f}_{\P} \,$ defines then an action of $ \Pic(\D) $ on $
\overline{\CC}_{n}(X)\,$, and part $(1)$ says that this action
restricts to the action of $ \Gamma $ on each individual fibre $
\overline{\CC}_{n}(X,\I) \,$, i.~e. $\, \bar{f}_{c(\omega)} =
\hsigma^*_{\omega}\,$ for all $\,\omega \in \Gamma \,$.

\subsection{The main theorem}
\la{TMT} We may now put pieces together and state the main result
of the present paper. We recall the functor $\, L_1\btheta^* =
\Tor^\Pi_1(\D,\,\mbox{---}):\, \Mod(\Pi) \to \Mod(\D) \,$
associated to $\,\btheta:\,\Pi \to \D
\,$: when restricted to finite-dimensional representations, this
functor is given by \eqref{CMf}.
\bthm
\la{Tmain} Let $X$ be a smooth affine irreducible curve over
$\c$.

$(a)$ For each $ n \geq 0 $ and $ [\I] \in \Pic(X) $, the functor
\eqref{CMf} induces an  injective map
$$
\omega_n:\,\overline{\CC}_n(X, \I) \to \gamma^{-1}[\I]\ ,
$$
which is equivariant under the action of the group $\,\Gamma $.

$(b)$ Amalgamating  the maps $\, \omega_n \,$ for all $\, n \ge
0\, $ gives a bijective correspondence
$$
\omega:\ \bigsqcup_{n \geq 0} \overline{\CC}_n(X,\I)
\stackrel{\sim}{\to} \gamma^{-1}[\I] \ .
$$

$(c)$ For any $ [\I] $ and $ [\J] $ in $ \Pic(X) $ and for any
$[\P] \in \Pic(\D) $, such that $ [\P] \cdot [\I] = [\J] $, there
is a commutative diagram:
\begin{equation}
\la{CMdim}
\begin{diagram}[small, tight]
 \overline{\CC}_n(X,\I)    &  \rTo^{\ \bar{f}_{\P}\ } & \overline{\CC}_n(X,\J) \\
\dTo^{\omega_n}            &                   & \dTo_{\omega_n} \\
\gamma^{-1}[\I]            & \rTo^{\ [\P]\ }       & \gamma^{-1}[\J] \\
\end{diagram}
\end{equation}
where $ \bar{f}_{\P} $ is an isomorphism induced by \eqref{eigg}.

\ethm
\begin{remark}
For technical reasons, we assumed above that $\, X \ne \A^1 $.
Theorem~\ref{Tmain} holds true, however, in general: if $ X = \A^1
$, the map $ \omega $ induced by $ \btheta^* $ agrees with the
Calogero-Moser map constructed in \cite{BW1, BW2} (see \cite{BCE},
Theorem~1). In this case, the ring $ \D $ is isomorphic to the
Weyl algebra $ A_1(\c) $, $\, \Pic(\D) $ is isomorphic to the
automorphism group $ \Aut(A_1) $ of $ A_1 $ (see \cite{St}) and $
\Gamma $ corresponds to the subgroup of KP flows in $ \Aut(A_1)$
(see \cite{BW1}). Since $ \Pic(\A^1) $ is trivial, the last part
of Theorem~\ref{Tmain} implies the equivariance of $ \omega $
under the action of $ \Aut(A_1) $.
\end{remark}
\section{Proof of the Main Theorem}
\la{proof}
We proceed in four steps. First, we show that the
functor \eqref{CMf} induces well-defined maps
$\,\tilde{\omega}_n:\,\CC_n(X,\I) \to\gamma^{-1}[\I] \,$, one for
each integer $\, n \ge 0 \,$. Second, we prove that every class
$\, [M] \in \gamma^{-1}[\I]\,$ is contained in the image of $\,
\tilde{\omega}_n \,$ for some $\, n \,$ (which is uniquely
determined by $ [M] $). Third, we check that  $\, \tilde{\omega}_n
\,$ factors through the action of $ \Lambda $ on $\,\CC_n(X,\I) $
and prove that the induced map $\,
\omega_n:\,\overline{\CC}_n(X,\I) \to \gamma^{-1}[\I] \,$ is
injective and $ \Gamma$-equivariant. Finally, we prove
Propositions~\ref{Lex} and~\ref{omeac} of Section~\ref{ad},
and show that the diagram \eqref{CMdim} in Theorem~\ref{Tmain} is
commutative.

We begin by describing the algebras $ \Pi^\blambda(B) $ in terms
of generators and relations.

\subsection{The structure of $ \Pi^\blambda(B) $}
\la{strP} Recall that, for each $\,\blambda \in S \,$, we defined
these algebras by formula \eqref{dpa1},
where $\,\Delta_B \in \DDer(B) \,$ is the distinguished derivation  
mapping $\, x \mapsto x \otimes 1 - 1 \otimes
x \,$. Now, $\,\DDer(B)\,$ contains a canonical sub-bimodule
$\, \DDer_S(B)\,$, consisting of $S$-linear derivations. We
write $\, \Delta_{B,S}: \,B \to B \otimes B \,$ for the inner
derivation $\, x \mapsto \ad_{\be}(x) \,$, with $\, \be := e
\otimes e + \ei \otimes \ei \in B \otimes B\,$. It is easy to see
that $\, \Delta_{B,S}(x) = 0 \,$ for all $ x \in S $, so $\,
\Delta_{B,S} \in \DDer_S(B)\,$.
\begin{lemma}
\la{DPApres} For any $\, \blambda \in S $, there is a canonical
algebra isomorphism
$$
\Pi^\blambda(B) \cong  T_{B}\DDer_S(B)/ \langle \Delta_{B,S} -
\blambda \rangle\ .
$$
\end{lemma}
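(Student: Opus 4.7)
The plan is to construct mutually inverse algebra homomorphisms between the two algebras, using the decomposition of $\DDer(B)$ induced by the semisimple subalgebra $S = \c e \oplus \c \ei$. The key computational input is the identity
\[
z \cdot \Delta_B \cdot y \,=\, \ad_{y \otimes z} \qquad \text{in}\ \DDer(B),
\]
for any $y, z \in B$, where on the left the bimodule action is the inner one on $\DDer(B)$ and on the right $\ad_{y \otimes z}$ is computed using the outer structure on $\eB$. A direct comparison shows that both sides send $x \mapsto xy \otimes z - y \otimes zx$. Applying this to the decomposition $1 \otimes 1 = \be + \bp$, with $\bp := e \otimes \ei + \ei \otimes e$, yields the crucial relation
\[
\Delta_B \,=\, \Delta_{B,S} \,+\, \ei\, \Delta_B\, e \,+\, e\, \Delta_B\, \ei
\]
as elements of $\DDer(B) \subset T_B \DDer(B)$.

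First, I would define the forward map $\phi: T_B \DDer_S(B)/\langle \Delta_{B,S} - \blambda\rangle \to \Pi^\blambda(B)$ induced by the inclusion $\DDer_S(B) \hookrightarrow \DDer(B)$ and the identity on $B$. Well-definedness reduces to showing that $\Delta_{B,S}$ maps to $\blambda$: using the displayed relation, $\phi(\Delta_{B,S}) = \Delta_B - \ei \Delta_B e - e \Delta_B \ei$, which modulo $\Delta_B = \blambda$ becomes $\blambda - \ei \blambda e - e \blambda \ei = \blambda$, since $\blambda \in S$ and $e \ei = \ei e = 0$.

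To construct an inverse, I use that the separability of $S$ produces a short exact sequence of $B$-bimodules
\[
0 \to \DDer_S(B) \to \DDer(B) \to \Der(S,\,\eB) \to 0,
\]
with $\Der(S, \eB) \cong e\,\eB\, \ei \oplus \ei\, \eB\, e$, realized via $m \mapsto \ad_m$. Hence every $\delta \in \DDer(B)$ decomposes uniquely as $\delta = \delta^S + \ad_{m(\delta)}$ with $\delta^S \in \DDer_S(B)$ and $m(\delta) \in e\,\eB\, \ei \oplus \ei\, \eB\, e$. This splitting respects the inner $B$-bimodule structures thanks to the formula $b_1 \, \ad_{y \otimes z}\, b_2 = \ad_{y b_2 \otimes b_1 z}$. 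I would then define a $B$-bimodule map $\DDer(B) \to T_B \DDer_S(B)/\langle \Delta_{B,S} - \blambda\rangle$ by $\delta \mapsto \delta^S + z\,\blambda\, y$ (for $m(\delta) = y \otimes z$, extended linearly), extend it to $\tilde{\psi}: T_B \DDer(B) \to T_B \DDer_S(B)/\langle \Delta_{B,S} - \blambda\rangle$, and check $\tilde{\psi}(\Delta_B) = \Delta_{B,S} + \ei\,\blambda\, e + e\,\blambda\, \ei = \Delta_{B,S} = \blambda$ in the quotient, so that $\tilde{\psi}$ descends to a map $\psi: \Pi^\blambda(B) \to T_B \DDer_S(B)/\langle \Delta_{B,S} - \blambda\rangle$.

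Finally, the identities $\phi \circ \psi = \id$ and $\psi \circ \phi = \id$ follow from the uniqueness of the decomposition $\delta = \delta^S + \ad_{m(\delta)}$ together with the tautological identity $\ad_{y \otimes z} = z\, \blambda\, y$ in $\Pi^\blambda(B)$. The main technical obstacle is the verification of $B$-bilinearity of $\psi$ on $\DDer(B)$, which reduces to the explicit formula $b_1 \ad_{y \otimes z} b_2 = \ad_{y b_2 \otimes b_1 z}$ for the inner $B$-bimodule action on inner derivations; once this is in place, the rest is routine bookkeeping.
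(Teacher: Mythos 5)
Your proof is correct, and its forward half is precisely the paper's argument: you construct $\phi$ from the inclusion $\DDer_S(B)\hookrightarrow\DDer(B)$ and verify it kills $\Delta_{B,S}-\blambda$ using the off-diagonal decomposition of $\Delta_B$, which is equivalent to the paper's identity $\Delta_{B,S}=e\,\Delta_B\,e+\ei\,\Delta_B\,\ei$. Where you go further is the inverse map, which the paper leaves as ``an exercise to the reader'': you supply it explicitly via the splitting $\DDer(B)\cong\DDer_S(B)\oplus\ad(e\,\eB\,\ei\oplus\ei\,\eB\,e)$ coming from separability of $S$, and the observation that the inner $B$-bimodule action satisfies $b_1\,\ad_{y\otimes z}\,b_2=\ad_{yb_2\otimes b_1 z}$, so the assignment $\ad_{y\otimes z}\mapsto z\,\blambda\,y$ is $B$-bilinear. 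The identity $z\cdot\Delta_B\cdot y=\ad_{y\otimes z}$ and the decomposition $1\otimes 1=\be+\bp$ check out, and the verification that $m(\Delta_B)=\bp$ so that $\tilde\psi(\Delta_B)=\Delta_{B,S}+\ei\blambda e+e\blambda\ei=\blambda$ in the quotient (using $\blambda\in S$) is exactly what is needed for $\tilde\psi$ to descend. One point worth making explicit, which you implicitly rely on: $e\,\eB\,\ei\oplus\ei\,\eB\,e$ (defined via the \emph{outer} $S$-bimodule structure) is stable under the \emph{inner} $B$-action, since for $y\otimes z\in e\,\eB\,\ei$ one has $yb_2\otimes b_1z\in e\,\eB\,\ei$ again because $ey=y$ and $z\ei=z$; otherwise the splitting would not be a $B$-bimodule splitting.
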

\begin{proof}
By universal property, the natural embedding of bimodules
$ \DDer_S(B) \into \DDer(B) $ extends to their tensor
algebras. Combined with canonical projection, this yields the
algebra map
$\, \phi:\,T_B \DDer_S(B) \into T_B \DDer(B) \onto
\Pi^\blambda(B)\,$.
An easy calculation shows that $\, \Delta_{B,S} = e\, \Delta_B \,e
+ \ei\, \Delta_B \,\ei \,$ in $\, \DDer(B)\,$. So $\,
\Delta_{B,S} - \blambda = e\,(\Delta_B - \blambda)\,e + \ei
(\Delta_B -\blambda)\,\ei \,$ belongs to the ideal $\,\langle
\Delta_{B} - \blambda \rangle \subseteq T_B \DDer(B) \,$, and
hence $\, \phi\,$ vanishes on $\,\Delta_{B,S} - \blambda\,$,
inducing an algebra map
\begin{equation}
\la{opsi}  T_{B}\DDer_S(B)/ \langle
\Delta_{B,S} - \blambda \rangle \to \Pi^\blambda(B)\ .
\end{equation}
We leave as an exercise to the reader to check that
\eqref{opsi} is an isomorphism.
\end{proof}

By Lemma~\ref{DPApres}, the structure of  $\,
\Pi^\blambda(B)\,$ is determined by the bimodule $\,\DDer_S(B)\,$. 
We now describe this bimodule explicitly, in terms of $\,
A \,$, $\,\I\,$ and the dual module $\, \I^\vee = \Hom_A(\I,A)\,$.
To fix notation we begin with a few fairly obvious remarks on
bimodules over one-point extensions.

A bimodule $ \Xi $ over $ B $ is characterized by the following
data: an $A$-bimodule $ T $, a left $A$-module $ U $, a right
$A$-module $ V $ and a $\c$-vector space $ W $ given together with
three $A$-module homomorphisms $\,f_1:\, \I \otimes V \to T \,$,
$\, f_2: \, \I \otimes W \to U \,$, $\, g_1:\, T \otimes_A \I \to
U \,$ and a $\c$-linear map $\, g_2:\, V \otimes_A \I \to W \,$,
which  fit into the commutative diagram
\begin{equation}
\la{dibim}
\begin{diagram}[small, tight]
\I \otimes  V \otimes_A \I     & \rTo^{\id \otimes g_2}     & \I \otimes W \\
\dTo^{f_1 \otimes_A \id}     &                            & \dTo_{f_2} \\
T \otimes_A \I                & \rTo^{g_1}                 & U \\
\end{diagram}
\end{equation}
These data can be conveniently organized by using the matrix
notation
\begin{equation}
\la{mat1}
\Xi = \left(
\begin{array}{cc}
 T & U \\
 V & W
\end{array}
\right) \ ,
\end{equation}
with understanding that $ B $ acts on $ \Lambda $  by the usual
matrix multiplication, via the maps $\,f_1, \,f_2, \,g_1 $ and $
g_2\,$. Note that the components of $ T $ are determined by
\begin{equation}
\la{mat2} T = e \,\Xi\, e\ , \quad U = e \,\Xi\, \ei\ ,
\quad V = \ei \Xi \,e\ , \quad W = \ei \Xi \,\ei\ ,
\end{equation}
and the commutativity of \eqref{dibim} ensures the associativity
of the action of $B$.  For example,
the free bimodule $\,B \otimes B\,$ can be decomposed into
a direct sum of four bimodules, each of which is easy to identify
using \eqref{mat2}:
\begin{equation}
\la{BoB1}
Be \otimes eB \cong \left(
\begin{array}{cc}
 A \otimes A  &  A \otimes \I\\
 0 & 0
\end{array}
\right) \ ,\quad
B e \otimes \ei B \cong \left(
\begin{array}{cc}
 0 &  A\\
 0 & 0
\end{array}
\right) \ ,
\end{equation}
\begin{equation}
\la{BoB2}
B\ei \otimes eB \cong \left(
\begin{array}{cc}
\I \otimes A  &  \I \otimes \I\\
A & \I
\end{array}
\right) \ ,\quad
B\ei \otimes \ei B \cong \left(
\begin{array}{cc}
 0  &  \I \\
 0 & \c
\end{array}
\right) \ .
\end{equation}
With this notation, the bimodule $\, \DDer_S(B)\,$ can be described as follows.
\begin{lemma}
\la{DerS}
There is an isomorphism of $B$-bimodules
$$
\DDer_S(B) \cong
\left(
\begin{array}{cc}
 \DDer(A)  & \Der(A, \I\otimes A)\\*[1ex]
 0 & 0
\end{array}
\right) \bigoplus \left(
\begin{array}{cc}
 \I \otimes \I^\vee  &  \I \otimes A\\*[1ex]
 \I^\vee & A
\end{array}
\right) \ ,
$$
with $\,\Delta_{B,S}\,$ corresponding to the element
\begin{equation}
\la{dS}
 \left[ \left(
\begin{array}{cc}
 \Delta_A  & 0\\*[1ex]
 0 & 0
\end{array}
\right) \, , \, \left(
\begin{array}{cc}
 -\sum_i \v_i \otimes \w_i  & 0\\*[1ex]
 0 & 1
\end{array}
\right) \right]\ ,
\end{equation}
where $\, \{\v_i\} \,$ and $\,\{\w_i\}\,$ are dual bases for the
projective $A$-modules $ \I $ and $ \I^\vee $.
\end{lemma}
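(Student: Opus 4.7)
The plan is to prove Lemma~\ref{DerS} by a direct structural computation, exploiting the identification $B = T_{\tilde A}(\tilde I)$ from Lemma~\ref{LL6}$(1)$, where $\tilde A = A\times\c$ and $\tilde I \cong \I$ is regarded as an $(A,\c)$-bimodule (with left $A$-action and trivial right $\c$-action). I would first observe that an $S$-linear derivation $d: B \to B\otimes B$ (outer structure on the target) is uniquely determined by the pair of restrictions $(d|_A, d|_\I)$, subject to the Leibniz condition $d|_\I(ai) = d|_A(a)\cdot i + a\cdot d|_\I(i)$ coming from the multiplication $A\cdot\I = \I$. By $S$-bilinearity, the values are constrained to lie in $d|_A(A) \subseteq eB\otimes Be = (A\oplus\I)\otimes A$ and $d|_\I(\I) \subseteq eB\otimes Be_\infty = (A\oplus\I)\otimes(\I\oplus\c)$.

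Next, I would analyze the sub-bimodule $M_2 := \{d \in \DDer_S(B) : d|_A = 0\}$. For such derivations, the Leibniz condition reduces to left $A$-linearity of $d|_\I$, so $M_2 \cong \Hom_A(\I,\,(A\oplus\I)\otimes(\I\oplus\c))$. Since $\I$ is a rank-one projective $A$-module, the standard identifications $\Hom_A(\I, A) = \I^\vee$, $\Hom_A(\I,\I) = A$, together with $\Hom_A(\I, A\otimes_\c V) = \I^\vee\otimes V$ and $\Hom_A(\I,\I\otimes_\c V) = A\otimes V$ for any $\c$-vector space $V$, decompose $M_2$ into four pieces. Computing the inner $(e_i,e_j)$-projections via $(e_i\,d\,e_j)(x) = \sum p_k e_j \otimes e_i q_k$ for $d(x) = \sum p_k\otimes q_k$ places these pieces exactly at the entries of the second matrix in the lemma. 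A complementary sub-bimodule $M_1 \cong \DDer(A)\oplus\Der(A,\I\otimes A)$ is then constructed by choosing, for each $d|_A$, a canonical particular solution $d|_\I^{\mathrm{can}}$ of the Leibniz equation, using the dual basis $\{\v_j,\w_j\}$ of $\I$ and $\I^\vee$. One then checks the splitting is $B$-bimodule linear and that $d|_A$-contributions live only in the first row of $M_1$ (since $d|_A(A) \subseteq (A\oplus\I)\otimes A$ automatically has no $(e_\infty,\cdot)$-inner component, as this would require the second factor in $e_\infty B = \c$).

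Finally, I would verify formula \eqref{dS} for $\Delta_{B,S} = \ad_\be$ by explicit calculation: for $a \in A$, one has $\Delta_{B,S}(a) = a\otimes e - e\otimes a = \Delta_A(a)$, contributing the $(e,e)$-entry $\Delta_A$ of the first matrix; for $i \in \I$, one has $\Delta_{B,S}(i) = i\otimes e_\infty - e\otimes i$, which splits via the chosen complement into a particular solution in $M_1$ (corresponding to $\Delta_A$) plus an $M_2$-component whose $(e_\infty,e_\infty)$-entry is the identity $1 \in A = \Hom_A(\I,\I)$ (from $i\otimes e_\infty \in \I\otimes\c$) and whose $(e,e)$-entry is $-\sum_j \v_j\otimes\w_j \in \I\otimes\I^\vee$ (from the $-e\otimes i$ term, unwound via the dual-basis relation $i = \sum_j \v_j\w_j(i)$). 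The main obstacle is careful bookkeeping of the two different bimodule structures (outer for derivation targets, inner for $\DDer_S(B)$), and ensuring that the chosen splitting is $B$-bimodule linear rather than merely $\c$-linear; the dual-basis identity $\sum_j \v_j\otimes\w_j = \id_\I$ is the key algebraic input that makes everything fit naturally.
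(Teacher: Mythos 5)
Your proof is correct and is dual in spirit to the paper's argument. The paper first identifies $\Omega_S^1 B$ explicitly (eq.~\eqref{omegaS}), describes the inclusion $\Omega_S^1 B \hookrightarrow B\otimes_S B$ by a pair of maps $(i,s)$ with the section $s$ built from the dual bases $\{\v_j\},\{\w_j\}$, and then dualizes to obtain \eqref{decc}; $\Delta_{B,S}$ is located by dualizing $(i,s)$. You instead work on the derivations side from the start: an $S$-linear derivation on $B = T_{\tilde A}(\tilde I)$ is a pair $(d|_A, d|_\I)$ subject to one Leibniz constraint from $A\cdot\I=\I$ (the relations $\I\cdot A=0$, $\I\cdot\I=0$ impose nothing new), you split off $M_2 = \{d : d|_A = 0\}$, and you complement it by a dual-basis particular solution, e.g.\ $d^{\mathrm{can}}_\delta(i) := \sum_j \delta(\w_j(i))\cdot\v_j$. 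Your $d^{\mathrm{can}}$ is precisely the transpose of the paper's section $s$, and the $(e_i,e_j)$-bookkeeping in the inner bimodule structure comes out the same way. What your route buys is avoiding the explicit description of $\Omega_S^1 B$ and the dualization step; what it costs is that one must check directly that $M_1 := \{(\delta, d^{\mathrm{can}}_\delta)\}$ is closed under the inner $B$-action (this holds because the inner and outer $B$-actions on $B\otimes B$ commute, giving $b_1\cdot d^{\mathrm{can}}_\delta\cdot b_2 = d^{\mathrm{can}}_{b_1\cdot\delta\cdot b_2}$). One small point worth flagging: your computation of $\Hom_A(\I, (A\oplus\I)\otimes(\I\oplus\c))$ naturally yields $\I^\vee\otimes\I$ and $A\otimes\I$ in the $(e,e)$ and $(e,e_\infty)$ slots where the lemma writes $\I\otimes\I^\vee$ and $\I\otimes A$; the discrepancy is just the canonical flip of $A$-tensor factors (both sides are $\cong A$, resp.\ $\cong\I$, for a rank-one projective over a commutative ring), so nothing is wrong, but it is not a literal equality and deserves a remark.
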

\begin{proof}
With identificaions \eqref{BoB1} and \eqref{BoB2}, it is easy to show that
\begin{equation}
\la{omegaS}
\Omega_S^1 B \cong \left(
\begin{array}{cc}
 \Omega^1 A  &  \Omega^1 A \otimes_A \I\\
 0 & 0
\end{array}
\right)
\bigoplus
 \left(
\begin{array}{cc}
 0  & \I\\
 0 & 0
\end{array}
\right)\ ,
\end{equation}
with inclusion $\,\Omega_S^1 B \into B \otimes_S B =
(Be \otimes eB)\,\oplus\, (B\ei \otimes \ei B)\,$
corresponding to the map $\,(i,\,s)\,$, where
$ i $ is the natural embedding of the first summand of
$\, \Omega_S^1 B \,$ into $ Be \otimes eB $, see \eqref{BoB1},
and $\,s\,$ is a $B$-bimodule section
$\,\tilde{\I} \to B \otimes_S B \,$ given by
$$
s:\
\left(
\begin{array}{cc}
 0 & b\\
 0 & 0
\end{array}
\right) \mapsto \left[
\left(
\begin{array}{cc}
 0  & - \sum_i \w_i(b) \otimes \v_i\\
 0 & 0
\end{array}
\right) \, , \, \left(
\begin{array}{cc}
 0  &  b\\
 0 & 0
\end{array}
\right) \right] \ ,\quad b \in \I\ .
$$
Note that $i$ is canonical, while $s$ depends on the choice of dual bases for $ \I $ and $ \I^\vee $.

To describe $\,\DDer_S(B)\,$ we now dualize
\eqref{omegaS} and use 
$\, \DDer_S(B) = \Hom_{\eB}(\Omega_S^1 B, \BB)\,$,
which after trivial identifications yields
\begin{equation}
\la{decc} \DDer_S(B) \cong \left(
\begin{array}{cc}
 \DDer(A)  &  \Der(A, \I \otimes A)\\*[1ex]
 0 & 0
\end{array}
\right) \bigoplus \left(
\begin{array}{cc}
 \I \otimes \I^\vee  &  \I \otimes \End_A(\I)\\*[1ex]
 \I^\vee & \End_A(\I)
\end{array}
\right) \, .
\end{equation}
Since $A$ is commutative and $\I$ is a rank $1$ projective, $\,\End_A(\I) = A \,$, so \eqref{decc} is the required
decomposition. With identification \eqref{omegaS}, 
the element \eqref{dS} corresponds to the embedding
$\,(i, s):\,\Omega_S^1 B  \to (B e \otimes e B)\oplus(B \ei \otimes \ei B) \into B \otimes
B \,$,
which, in turn, corresponds under \eqref{decc} to the element
$\, \Delta_{B,S}\in \DDer_S(B)\,$.
\end{proof}

Now, using the isomorphism of Lemma~\ref{DPApres}, we identify
$ \Pi^\blambda(B) $ as a quotient of the tensor algebra of 
the bimodule $ \DDer_S(B) $. Keeping the notation of Lemma~\ref{DerS}, we then have
\begin{proposition}
\la{gene}
The algebra $\,\Pi^\blambda(B)\,$ is generated by (the
images of) the following elements
$$
\ha := \left(
\begin{array}{cc}
 a & 0\\*[1ex]
 0 & 0
\end{array}
\right)\ ,\ \hv_i := \left(
\begin{array}{cc}
 0 & \v_i\\*[1ex]
 0 & 0
\end{array}
\right) \ , \ \hd :=\left(
\begin{array}{cc}
 d & 0\\*[1ex]
 0 & 0
\end{array}
\right)\ ,\ \hw_i := \left(
\begin{array}{cc}
 0 & 0\\*[1ex]
 \w_i & 0
\end{array}
\right) \ ,
$$
where $\, \ha \, ,\,\hv_i \in B \,$ and $\, \hd
\,,\, \hw_i \in \DDer_S(B)\,$ with $\,d \in \DDer(A)\,$.
Apart from the obvious relations induced by matrix multiplication,
these elements satisfy
\begin{equation}
\la{r1}
\hdel_A - \sum_{i=1}^N \hv_i \cdot \hw_i = \lambda\, e\
,\quad \sum_{i=1}^N \hw_i \cdot \hv_i = \lambdai\, \ei\ ,
\end{equation}
where `$\,\cdot $' denotes the action of  $B$ on the bimodule $
\DDer_S(B)$.
\end{proposition}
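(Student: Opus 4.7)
The plan is to use Lemma~\ref{DPApres} to replace $\Pi^\blambda(B)$ with $T_B \DDer_S(B)/\langle \Delta_{B,S} - \blambda \rangle$, and then exploit the explicit matrix-form decomposition of $\DDer_S(B)$ furnished by Lemma~\ref{DerS}. The argument splits into two tasks: checking that the listed elements generate, and decomposing the single master relation $\Delta_{B,S} = \blambda$ into the two identities of \eqref{r1}.

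For generation, I would first observe that $B$ itself is generated as a $\c$-algebra by the images of $a\in A$ (filling the $(1,1)$ corner), the elements $v_i$ (spanning $I$ in the $(1,2)$ corner), and the complementary idempotent $\ei = 1 - \hat{1}_A$. Hence the $\ha$ and $\hv_i$ suffice for the $B$-part of the tensor algebra. Next, by Lemma~\ref{DerS}, the bimodule $\DDer_S(B)$ is generated over $B$ by (a) the derivations $d \in \DDer(A)$ sitting in the $(1,1)$ entry of the first summand, whose right bimodule action by $\hv_j$ produces the $(1,2)$ entry $\Der(A, I\otimes A)$, and (b) the dual elements $w_i \in I^\vee$ in the $(2,1)$ entry of the second summand, whose left and right multiplication by $\hv_j$'s produces the entries $I\otimes I^\vee$, $I\otimes A$, and $A$ in the other three positions. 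Since $T_B\DDer_S(B)$ is generated as an algebra by $B$ and $\DDer_S(B)$, the list $\{\ha, \hv_i, \hd, \hw_i\}$ is a generating set for $\Pi^\blambda(B)$.

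For the relations, the plan is to sandwich $\Delta_{B,S} - \blambda = 0$ between the orthogonal idempotents $e$ and $\ei$ on both sides. Since $\blambda = \lambda\, e + \lambdai\, \ei$ is diagonal and, by Lemma~\ref{DerS}, the explicit form of $\Delta_{B,S}$ also has vanishing off-diagonal entries, the two cross terms $e(\Delta_{B,S}-\blambda)\ei$ and $\ei(\Delta_{B,S}-\blambda) e$ yield the trivial identity $0=0$. Reading off the $(1,1)$ corner, the first summand contributes $\Delta_A = \hdel_A$ and the $I\otimes I^\vee$ entry of the second summand contributes $-\sum v_i \otimes w_i$, which in the tensor algebra is $-\sum \hv_i\cdot \hw_i$; this yields the first relation $\hdel_A - \sum \hv_i\cdot\hw_i = \lambda\, e$. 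Reading off the $(2,2)$ corner, the second summand contributes the element $1 \in A \cong \End_A(I)$, which under the identification $I^\vee\otimes_A I \cong A$ is realized in $T_B\DDer_S(B)$ as $\sum \hw_i\cdot \hv_i$; this yields $\sum \hw_i\cdot\hv_i = \lambdai\, \ei$. Since $\{e, \ei\}$ is a complete set of orthogonal idempotents, this exhausts the content of the master relation.

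The principal technical difficulty will be the careful bookkeeping required to match the tensor-algebra products $\hv_i\cdot\hw_j$ and $\hw_i\cdot\hv_j$ with the correct entries in the decomposed $\DDer_S(B)$, and in particular to verify that $\sum \hw_i\cdot\hv_i$ corresponds (via the trace pairing $I^\vee\otimes_A I \cong \End_A(I) \cong A$) to the identity of $\End_A(I)$, so that the $(2,2)$ relation appears cleanly as $\sum \hw_i\hv_i = \lambdai\,\ei$ without extra scalar factors, and that the sign in the first relation comes out of the explicit form of $\Delta_{B,S}$ given in \eqref{dS}.
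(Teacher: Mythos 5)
Your proof is correct and follows essentially the same route as the paper's: both use Lemma~\ref{DPApres} to pass to $T_B\DDer_S(B)/\langle\Delta_{B,S}-\blambda\rangle$, invoke the explicit decomposition of $\DDer_S(B)$ from Lemma~\ref{DerS}, and then sandwich the master relation by the idempotents $e$ and $\ei$ (with the off-diagonal pieces vanishing automatically) to peel off the two relations in \eqref{r1}. Your write-up is somewhat more explicit than the paper's about why the listed elements generate the tensor algebra and about the role of $\ei = 1-\hat 1_A$, but these are the same steps spelled out in more detail, not a different argument.
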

\begin{proof}
By Lemma~\ref{LL6}, the matrices $\,\{\ha\}\,$ and $\,\{\hv_i\}\,$
generate the algebra $ B $, while $\,\{\hd\}\,$ and
$\,\{\hw_i\}\,$ generate the first and the second bimodule summand
of \eqref{decc} respectively. All together they thus generate the
tensor algebra. Now, the ideal $\,\langle
\Delta_{B,S} - \blambda \rangle\,$ in $ \Pi^\blambda(B) $ is
generated by  $\,e\,(\Delta_{B,S} - \blambda )\,e =
e\,\Delta_{B,S}\,e - \lambda \, e\,$ and $\,\ei\,(\Delta_{B,S} -
\blambda )\,\ei = \ei\,\Delta_{B,S}\,\ei - \lambdai \, \ei\,$,
since the sum of these elements is equal to $\, \Delta_{B,S} -
\blambda\,$. With identification of Lemma~\ref{DerS}, we then have
$$
e\,\Delta_{B,S}\,e = \hdel_A - \sum_{i=1}^N \hv_i \cdot \hw_i \quad , \quad
\ei\,\Delta_{B,S}\,\ei = \sum_{i=1}^N \hw_i \cdot \hv_i\ ,
$$
whence the relations \eqref{r1}.
\end{proof}

Using Proposition~\ref{gene}, we now prove two technical results,
which we use repeatedly in this paper (Lemma~\ref{Lrec1} below 
was already mentioned in Section~\ref{Re}).
\begin{lemma}
\la{P7}
If $ \lambdai \not= 0 $, the algebra $\,
\Pi^{\blambda}(B)\,$ is Morita equivalent to $\, e\,
\Pi^{\blambda}(B)\,e \,$.
\end{lemma}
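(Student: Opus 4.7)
My plan is to apply the standard Morita-theoretic criterion: for an idempotent $e$ in a ring $R$, the algebras $R$ and $eRe$ are Morita equivalent if and only if $e$ is a full idempotent, i.e., $ReR = R$. Equivalently, the two-sided ideal generated by $e$ must be all of $\Pi^\blambda(B)$.

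Since $\{e, \ei\}$ is a complete set of orthogonal idempotents in $\Pi^\blambda(B)$ with $e + \ei = 1$, it suffices to exhibit $\ei$ as an element of the two-sided ideal $\Pi^\blambda(B)\,e\,\Pi^\blambda(B)$. Here the hypothesis $\lambda_\infty \ne 0$ enters crucially through the second relation in \eqref{r1}, which reads
\[
\sum_{i=1}^N \hat{\w}_i \cdot \hat{\v}_i \;=\; \lambda_\infty\, \ei.
\]
From the explicit matrix descriptions in Proposition~\ref{gene}, we have $\hat{\v}_i = e\,\hat{\v}_i\,\ei$ and $\hat{\w}_i = \ei\,\hat{\w}_i\,e$, so each product $\hat{\w}_i \hat{\v}_i = (\ei \hat{\w}_i e)(e \hat{\v}_i \ei)$ visibly factors through $e$ and therefore lies in $\Pi^\blambda(B)\,e\,\Pi^\blambda(B)$. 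Inverting $\lambda_\infty$ then yields
\[
\ei \;=\; \lambda_\infty^{-1}\sum_{i=1}^N \hat{\w}_i\,\hat{\v}_i \;\in\; \Pi^\blambda(B)\,e\,\Pi^\blambda(B).
\]

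Combined with the trivial inclusion $e \in \Pi^\blambda(B)\,e\,\Pi^\blambda(B)$, this gives $1 = e + \ei \in \Pi^\blambda(B)\,e\,\Pi^\blambda(B)$, so the two-sided ideal is the whole algebra. Hence $e$ is a full idempotent, and by the standard Morita theorem the functor $M \mapsto eM$ with quasi-inverse $N \mapsto \Pi^\blambda(B)\,e\otimes_{e\Pi^\blambda(B)e} N$ yields the desired Morita equivalence between $\Pi^\blambda(B)$ and $e\,\Pi^\blambda(B)\,e$.

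There is no real obstacle here beyond carefully tracking the $e$/$\ei$ decomposition of the generators; the content of the lemma is entirely captured by the second preprojective relation, and the case $\lambda_\infty = 0$ is precisely the degenerate situation in which this argument breaks down (since then $\ei$ need not belong to $\Pi^\blambda(B)\,e\,\Pi^\blambda(B)$).
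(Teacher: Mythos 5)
Your proof is correct and follows essentially the same argument as the paper: both reduce to showing $e$ is a full idempotent, then use the relation $\sum_i \hat{\w}_i\,\hat{\v}_i = \lambdai\,\ei$ together with the fact that $\hat{\w}_i\,\hat{\v}_i = \hat{\w}_i\, e\,\hat{\v}_i$ to exhibit $\ei \in \Pi^{\blambda}(B)\,e\,\Pi^{\blambda}(B)$ when $\lambdai \ne 0$.
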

\begin{proof}
By standard Morita theory, it suffices to show that $\, \Pi^{\blambda}(B) \,e\,
\Pi^{\blambda}(B) = \Pi^{\blambda}(B) \,$. This last identity
holds  in $ \Pi^{\blambda}(B)$ if  $\, 1 \in
\Pi^{\blambda}(B) \,e\, \Pi^{\blambda}(B)\,$, or equivalently, if
$\,\ei \in \Pi^{\blambda}(B) \,e\, \Pi^{\blambda}(B)\,$, since $ e
+ \ei = 1 $. But if $\,\lambdai \not=0\,$, the second relation of
\eqref{r1} can be written as
\begin{equation}
\la{nei}
\ei = \frac{1}{\lambdai}\,\sum_{i=1}^N \hw_i \cdot \hv_i =
\frac{1}{\lambdai}\,\sum_{i=1}^N \hw_i \cdot e \cdot \hv_i \ ,
\end{equation}
whence the result.
\end{proof}
\blemma
\la{Lrec1}
The multiplication map $\,\Pi\,\ei \otimes_U
\ei \Pi \to \Pi  \,$ gives a projective resolution of $ \D $ in
the category of (left and right) $\Pi$-modules, see \eqref{scom}.
\elemma
\bproof We identify $\, A \cong e B e
\subset e \Pi e \,$ via $\, a \mapsto \ha \,$ and $\, \I \cong e B
\ei \subset e \Pi \ei $ via $\, \v \mapsto \hv \,$. Then 
tensoring $ \btheta $ with $ \I $ yields $\, e \Pi e \otimes_{e B
e} e B \ei \to \D \otimes_A \I \cong \D \I \,$. Since $ e B \ei $
is a projective $eBe$-module, the multiplication map $\, e \Pi e
\otimes_{e B e} e B \ei \to e \Pi \ei \,$ is an isomorphism onto
$\, e \Pi e B \ei \subseteq e \Pi \ei \,$. On the other
hand, $\, e \Pi \ei \subseteq e \Pi e B \ei \,$, by \eqref{nei}. 
Thus, identifying $ \,
e \Pi e \otimes_{e B e} e B \ei \cong e \Pi \ei \,$, we get a
surjective map of left $ A$-modules: $\,e \Pi \ei \onto \D \I \,$.
Since $ \D\I$ is projective over $A$, the last map 
has an $A$-linear section, which we denote by $\, s:\, \D \I \into e \Pi \ei \,$. 
Now, using this section, we consider 
\begin{equation}
\la{idenn}
\D\I \otimes \ei \Pi e \xrightarrow{s \otimes 1} e \Pi \ei
\otimes \ei \Pi e \onto e \,\Pi\, \ei \otimes_{U} \ei \Pi e\ ,
\end{equation}
which is a homomorphism of right $e \Pi e$-modules.
Since $\,e \Pi \ei =  e \Pi e B \ei = \sum_i e \Pi e\,
\hv_i \,$, the map \eqref{idenn} is surjective. On the other hand,
using filtrations, it is easy to show that the
composition of \eqref{idenn} with multiplication map $\,e \Pi \ei
\otimes_{U} \ei \Pi e \to e \Pi e\,$ is injective. Hence
\eqref{idenn} is injective and therefore an isomorphism. This
implies that $\, e \Pi \ei \otimes_{U} \ei \Pi e \,$ is a right
projective $e \Pi e$-module (since obviously so is $\, \D\I
\otimes \ei \Pi e \,$), and $\, 0 \to e \,\Pi\, \ei \otimes_{U}
\ei \Pi e \to e \Pi e \xrightarrow{\btheta} \D \to 0\,$ is an
exact sequence of $e \Pi e$-modules. By Morita equivalence of
Lemma~\ref{P7}, the complex $\,0 \to \Pi\,\ei \otimes_U \ei \Pi
\to \Pi \to 0 \,$ is then a projective resolution of $ \D$ in the
category of right $\Pi$-modules. A similar argument shows that
this complex is also a projective resolution of $ \D $ as a left
$\Pi$-module. 
\eproof

\subsection{The map $ \omega $ is well-defined}
\la{st1}
We show that the functor \eqref{CMf} maps the $
\Pi$-modules of dimension vector $ \bn = (n,1) $ to rank $1$
torsion-free $\D$-modules $ M $ with $ \gamma[M] = [\I] $.

Let $ \bV $ be a $ \Pi$-module of dimension vector $ \bn $, and
let $\, \bL := \Pi \ei \otimes_{U} \ei \bV $. Write $\, V := e \bV
\,$, $\, \Vi := \ei \bV \,$, and similarly $\, L := e \bL $, $\,
L_\infty := \ei \bL \,$ (so that $\,\dim\, \Vi = \dim\, L_\infty =
1 $). Fix a vector $ \xi \ne 0 $ in $ \Vi $ and define a character
$\,\varepsilon: U \to \c \,$ by $\, u.\xi = \eps(u) \xi \,$ for
all $ u \in U $ . Note that $ \eps $ does not depend on the choice
of $ \xi $ and uniquely determines $ \bL $ (and $ \bV$). In fact,
we have the isomorphism of $\Pi$-modules
\begin{equation}
\la{iso}
\Pi\,\ei \! \! \left/ \sum_{u \in U}\,\Pi\, \ei(u - \eps(u))
\stackrel{\sim}{\to} \bL
\right.\ ,\quad [\ei] \mapsto \ei \otimes \xi\ .
\end{equation}

Now, under the equivalence of Lemma~\ref{P7},
$\,\bmu: \bL \to \bV \,$ transforms to a homomorphism
of $ e \Pi e$-modules $\,\mu: L \to V \,$. Since 
$ \ei (\Ker\,\bmu) = 0 $, we have $\, \Ker\,\bmu = 
e\,(\Ker\,\bmu) = \Ker\,\mu $. Thus
$\, \btheta^* (\bV) \cong \Ker\,\mu\,$,
which is naturally an isomorphism of $\D$-modules via
$\,\btheta|_{e \Pi e}: \, e \Pi e \to \D \,$.

Next, we set $ R := T_A \DDer(A) $ and define the algebra map
\begin{equation}
\la{algm}
 R \to e \Pi e \ ,\quad a \mapsto \ha\ , \ d \mapsto \hd\ ,
\end{equation}
where $ a \in A $ and $ d \in \DDer(A)$. Extending the
notation of Proposition~\ref{gene}, we will write $\, \hr \in e
\Pi e  \,$ for the image of any element $\, r \in R \,$ under
\eqref{algm}. Note that the natural projection $\,R \onto
\Pi^1(A) = \D \,$ factors through \eqref{algm}, and the
corresponding quotient map is  $\,\btheta|_{e \Pi e}\,$.
The following observation is an easy consequence of \eqref{iso}
and Lemma~\ref{Lrec1}.
\blemma
\la{iso2}
There is an isomorphism of $R$-modules
\begin{equation}
\la{iso3}
L \cong R \,\I \! \! \left/\sum_{i=1}^N \sum_{r \in R}\,R \biggl[(\Delta_A -1)\,r \v_i -
\sum_{j=1}^N\,\eps(\hw_j \,\hr\,\hv_i)\,\v_j\biggr]
\right. \ ,
\end{equation}
where $ L $ is regarded as an $ R $-module via \eqref{algm}, and $
R\,\I := R \otimes_A \I $.
\elemma
\bproof If we identify $\, A \cong e B e
\subset e \Pi e \,$, $\, \I \cong e B \ei  \subset e \Pi \ei \,$
as in Lemma~\ref{Lrec1}, the required isomorphism is induced by
\begin{equation*}
\la{ppp}
R \, \I \stackrel{\pi_1}{\too}  e \Pi e \otimes_{e B e} e B \ei
\stackrel{\pi_2}{\too} e \Pi \ei \onto
e \Pi \ei\! \! \left/ \sum_{u \in U}\, e \Pi \ei  \,(u - \eps(u))
\right. \ ,
\end{equation*}
where $\,\pi_1 $ is the product of \eqref{algm} with $ \I
$ and $\,\pi_2\,$ is the multiplication map.
\eproof
Now, the tensor algebra filtration on $\,R = T_A \DDer(A)\,$
induces the differential filtration on $
\D $ via the canonical projection and module filtrations on $
L $ and $ M \subseteq L $ via the isomorphism of Lemma~\ref{iso2}.
Writing $\,\DO \,$, $\,\LO\,$, $\ldots $ for the associated graded
objects relative to these filtrations, we have
\begin{equation*}
\la{filtin}
 \MO \,\subseteq \,\LO\, \cong \, \RO\,\I\!\!\left/
\RO\,\bar{\Delta}_A \RO\,\I \right. \,\cong\,
(\RO\!\left/\RO\,\bar{\Delta}_A \,\RO) \otimes_A \I
 \right. \cong \DO \otimes_A \I \cong \DO\,\I\ .
\end{equation*}
It follows that $ M $ is a rank $1$ torsion-free module
(as so is $ \MO $). Moreover, since $\,
\dim\, \LO/\MO = \dim\, L/M < \infty \,$, by Theorem~\ref{T4}$(a)$,
$\, \gamma[M] = [\I]\,$. This completes Step~1.\\*[-1ex]

\subsection{The map $ \omega $ is surjective}
\la{st2}
Given a rank $ 1 $ torsion-free $\D$-module $ M $, we now
construct a $ \Pi$-module $ \bL $, together with a $\,
\Pi^\blambda(B)$-module embedding $\,M \into \bL \,$, such that
 $ \bV := \bL/M $ has dimension $ (n,1) $ and $\, \btheta^*[\bV] \cong M \,$.

We begin with some preparations. We let $\, \DD :=
\bigoplus_{k=0}^{\infty} \,\D_k t^k \,$ denote the Rees algebra
of the ring $ \D $ with respect to its
canonical filtration $\{\D_k\}$, and let $\,\GrM(\DD)\,$ be the
category of graded $\DD$-modules. There is a natural homomorphism
of graded rings $\, p:\,\DD \to \DO \,$, mapping $\,a \,t^k \in
\DD_k \,$ to $\,a\,(\mbox{mod}\, \D_{k-1}) \in \DO_k\,$. Using
this homomorphism, we will regard graded $\DO$-modules as objects
of $\,\GrM(\DD)\,$. Since $\,\Ker(p) = \langle\, t \,\rangle\,$,
we may identify $\,\DO \cong \DD/\langle\, t \,\rangle \,$. This
implies that $ \DD $ is Noetherian, since so is $\,\DO\,$ (see
\cite{L}, Prop.~3.5).

Next, following \cite{AZ}, we define $\,\Fdim(\DD)\,$ to be the
full subcategory of $ \GrM(\DD) $ consisting of torsion modules.
By definition, $\, \MM \in \GrM(\DD) \,$ is {\it torsion}, if for
every $\, m \in \MM \,$ there is $ k_m \in \N $ such that  $
\DD_k\,m = 0 $ for all $ k \ge k_m $. By  \cite{AZ}, Sect.~2,
$\,\Fdim(\DD)\,$ is a localizing subcategory of $ \GrM(\DD)
$: i.~e., the inclusion functor $\,\Tors(\DD) \into \GrM(\DD) \,$ has
a right adjoint $\,\tau:\, \GrM(\DD) \to
\Tors(\DD)\,$, which assigns to a graded module $ \MM $ its
largest torsion submodule
$\, \tau(\MM) = \{m \in \MM\ :\ \DD_{k}\,m = 0\ \mbox{for all}\ k
\gg 0\} $.
The functor $ \tau $ is left exact, and we write $\,\tau_k := \RR^k \tau:\,
\GrM(\DD) \to \Tors(\DD)\,$ for its derived functors.

We also introduce the quotient category $\, \Qgr(\DD) :=
\GrM(\DD)/\Fdim(\DD)\,$. This is an abelian category that comes
equipped with two canonical functors: the (exact) quotient functor
$\,\pi:\,\GrM(\DD) \to \Qgr(\DD) \,$ and its right adjoint (and
hence left exact) functor $\,\omega:\,\Qgr(\DD) \to \GrM(\DD)\,$.
The relation between  $ \pi $, $ \omega $ and $ \tau $ is
described by the following result which is part of standard
torsion theory (see, e.~g., \cite{AZ}, Prop.~7.2).
\bthm \la{cons} $(1)$\ The adjunction map $\, \eta_{\MM}:\, \MM
\to \omega\,\pi(\MM) $ fits into the exact sequence
\begin{equation}
\la{exfun}
0 \to \tau(\MM)\to \MM \xrightarrow{\eta_{\MM}}
\omega \pi(\MM) \to \tau_1(\MM)\ \to 0\ ,
\end{equation}
which is functorial in $ \MM \in \GrM(\DD) $.

$(2)$\ For $ k \ge 1 $, there are natural isomorphisms
\begin{equation}
\la{isex}
\RR^k \omega(\pi \MM) \cong \tau_{k+1}(\MM)\ .
\end{equation}
In particular, if $\, k \ge 1 $, the modules $ \RR^k \omega(\pi
\MM) $ are torsion. \ethm

Now, given a graded module $\,\MM = \bigoplus_{k \in \Z} \MM_k \,$
and $\, n \in \Z \,$, we write  $\, \MM[n] := \bigoplus_{k \in \Z}
\MM_{k+n} \,$ and $\,\MM_{\ge n} := \bigoplus_{k\ge n}\MM_k \,$.
Both are graded modules, $\,\MM_{\ge n}\,$ being a submodule of $
\MM $. With this notation, we compute  $ \RR^k \omega (\pi \DO) $,
regarding $\DO $ as a $\DD$-module via the algebra map $ p:\,\DD
\to \DO $.
\blemma \la{calc} $(1)$ The canonical map $\ \eta_{\DO}:\,\DO
\xrightarrow{\!\sim\!\!} \omega \pi (\DO)_{\ge 0} \,$ is an
isomorphism.

\hspace{11.2ex} $(2)$\ $\, \RR^{k} \omega (\pi \DO) = 0 \,$ for
$\,k \ge 1\,$. \elemma
\bproof For  graded $\DO$-modules $ \MO $ and $ \NO $, we define
(cf. \cite{AZ}, Sect.~3)
$$
\HOM_{\DO}(\MO,\,\NO) := \bigoplus_{k \in Z}\,
\Hom_{\tiny {\rm GrMod}(\DO)}(\MO,\,\NO[k])\ ,
$$
and write $\,\EXT_{\DO}^{n}(\MO,\,\NO) \,$ for the corresponding
Ext-groups. Combining \cite{AZ}, Theorem~8.3 and Proposition~7.2,
we then identify
\begin{equation}
\la{cohg}
R^k \omega (\pi \DO) \cong \dlim\ \EXT^k_{\DO}(\DO_{\ge n},\,\DO)\ ,
\quad \forall\ k\ge 0 \ .
\end{equation}
To compute the Ext-groups in \eqref{cohg} we use the long
cohomology sequence
\begin{equation}
\la{cohs}
\EXT^k_{\DO}(\DO_{n}[-n], \DO) \to
\EXT^k_{\DO}(\DO_{\ge n}, \DO)  \to \EXT^k_{\DO}(\DO_{\ge n+1}, \DO) \to
\EXT^{k+1}_{\DO}(\DO_{n}[-n], \DO)
\end{equation}
arising from the short exact sequence $\, 0 \to \DO_{\ge n+1} \to
\DO_{\ge n} \to \DO_{n}[-n] \to 0\,$, and the following
isomorphisms (for $ n \ge 0 $)
\begin{equation}
\la{extc1}
\EXT^k_{\DO}(\DO_n[-n], \DO) = 0 \quad \mbox{if} \quad k \not= 1 \ ,
\end{equation}
and
\begin{equation}
\la{extc2}
\EXT^1_{\DO}(\DO_n[-n], \DO)_m \cong
\left\{\begin{array}{ll}
0 & \ \mbox{if} \quad m \ne -n-1\\*[1ex]
\Sym^{-m}(\Omega^1 X) &\ \mbox{if} \quad m = -n-1
\end{array}\right.\ ,
\end{equation}
where $\, \Sym^{q} \,$ stands for the $q$-th symmetric power over
$A$. It is easy to see that \eqref{cohs}, \eqref{extc1} and
\eqref{extc2}, together with \eqref{cohg}, formally imply both 
statements of the lemma. (In addition, we have $\, \omega (\pi \DO)_n \cong
\Sym^{-n}(\Omega^1 X)\,$ for $\, n < 0\,$.)

To prove \eqref{extc1} and \eqref{extc2} we observe that
$\, \EXT^k_{\DO}(\DO_n[-n], \DO) \cong \EXT^k_{\DO}(\DO_n, \DO)[n]$, 
where $ \DO_n $ is a graded $\DO$-module with a single component in degree $0$.
Such modules arise by restricting scalars via
the algebra projection $\, \DO \to A \,$. So we can compute
$ \EXT^k_{\DO}(\DO_n, \DO) $ using the spectral sequence
\begin{equation}
\la{chr1}
\Ext_A^p(\DO_n,\,\EXT_{\DO}^q(A,\,\DO)) \ \underset{p}{\Rightarrow}\
\EXT_{\DO}^{p+q}(\DO_n,\,\DO)\ .
\end{equation}
To this end, we identify $\,\DO \,$ with
the symmetric algebra $\Sym_A(\Der\,A)$ and use the canonical resolution
\begin{equation}
\la{resd}
0 \to \DO \otimes_A \Der(A)[-1] \to \DO \to A \to 0\ .
\end{equation}
It follows from \eqref{resd} that $\,\EXT_{\DO}^q(A,\,\DO)= 0 \,$ for $\, q \ne 1 \,$,
so  \eqref{chr1} collapses on the line
$\, q = 1 \,$, giving (after natural identifications) the isomorphisms \eqref{extc1} and \eqref{extc2}.
\eproof
\blemma \la{fram} If $ \I $ is a flat $A$-module, then
$$
\RR^k \omega\,\pi(\MM \otimes_A \I) \cong \RR^k \omega\,(\pi \MM)
\otimes_A \I  \ ,\quad \forall\ k \ge 0 \ ,
$$
for any graded $\DD$-$A$-bimodule $ \MM $.
 \elemma
\bproof By \cite{AZ}, Prop.~7.2(1), we have
$\,
\RR^k \omega\,\pi(\MM \otimes_A \I) \cong
\dlim\ \EXT^k_{\,\DD} (\DD_{\ge n},\,\MM \otimes_A \I)
 \,$.
Since $\,\dlim \,$ commutes with tensor products, it suffices to
prove that
\begin{equation}
\la{exts}
\EXT^k_{\,\DD} (\DD_{\ge n},\,\MM \otimes_A \I) \cong
\EXT^k_{\,\DD} (\DD_{\ge n},\,\MM) \otimes_A \I\quad \mbox{for}\ n
\gg 0\ .
\end{equation}
Furthermore, by functoriality, it suffices to prove \eqref{exts}
only for $ k = 0 $, but in that case the result is well known (see \cite{Ro}, Lemma 3.83).
\eproof

Now, we turn to our problem. As in Section~\ref{SCI}, we choose a
good filtration $ \{M_k\} $ on $M$ so that $ \MO :=
\bigoplus_{k\in\Z} M_k/M_{k-1} $ is a torsion-free $\DO$-module.
Then, by Theorem~\ref{T4}, there is an ideal $\, \I \subseteq A
\,$ (unique up to isomorphism) and a graded embedding
\begin{equation}
\la{embd}
\fo :\, \MO \into \DO \I\ ,
\end{equation}
such that $\,\dim\, \Coker(\fo) < \infty \,$. The filtration
$\,\{M_k\}\,$ is uniquely determined by $ M $ up to a shift of
degree (cf. Lemma~\ref{L6} below); we fix this shift by requiring
$\,\fo \,$ to be of degree $0$. The dimension $\, n :=
\dim\,\Coker(\fo) \,$ is then an invariant of $ M $,
independent of the choice of filtration.

Since $\, \eta:\,\id \to \omega \pi \,$ is a natural
transformation, the map \eqref{embd} fits into the
commutative diagram
\begin{equation}
\la{d222}
\begin{diagram}[small, tight]
\MO              & \rTo^{\fo}              & \DO \I\\
\dTo^{\eta_{\MO}}&                        & \dTo_{\eta_{\DO \I}}\\
\omega \pi(\MO)  & \rTo^{\ \omega \pi(\fo)\ }   & \omega \pi(\DO \I)
\end{diagram}
\end{equation}
As $\,\Ker(\fo) = 0\,$ and $\,\Coker(\fo) \in \Fdim(\DD)\,$, $\,
\pi(\fo) \,$ and, hence, $\,\omega\pi(\fo)\,$ are isomorphisms. On
the other hand, by Lemma~\ref{fram},  $\,\eta_{\DO \I} \,$
can be factored as
$$
\DO \I \cong \DO \otimes_A \I \xrightarrow{\eta_{\DO} \otimes 1}
\omega \pi(\DO)\otimes_A \I \cong \omega \pi(\DO \I)
$$
and hence, by Lemma~\ref{calc}$(1)$, $\,\eta_{\DO \I}:\,\DO \I
\stackrel{\sim}{\to} \omega \pi(\DO \I)_{\ge 0} $ is an
isomorphism. Using these two isomorphisms, we identify
\begin{equation}
\la{idmap}
\omega \pi(\MO)_{\ge 0} \cong \DO \I\ .
\end{equation}
It follows then from \eqref{exfun} and 
\eqref{d222} that $\, \tau(\MO) = 0 \,$ and
$\,\tau_1(\MO)_{\ge 0}  \cong \Coker(\fo)\cong \DO \I/\MO\,$.
Hence
\begin{equation}
\la{rtau}
\dim \,\tau_1(\MO)_{\ge 0} = n\ .
\end{equation}

Next, we set $\,\NN := \bigoplus_{k \in \Z}\, M/M_k \,$ and make
$\NN $ a graded $ \DD$-module in the natural way, with $ t \in \DD
$ acting by the canonical projections $ M/M_k \onto M/M_{k+1} $.
\bprop
\la{propN}
The module $\NN$ has the following properties:

$(1)$ $\ \tau(\NN) = 0\,$,

$(2)$  $\ \dim \, \tau_1(\NN)_{-1} = n\,$, and $\, \dim
\, \tau_1(\NN)_{\ge -1} < \infty\,$,

$(3)$\ The maps $\,\omega\,\pi(\NN)_{k-1} \xrightarrow{t}
\omega\,\pi(\NN)_k \,$ are surjective for all $\,k\ge 0\,$.
 \eprop
\bproof $(1)$ Given $\,\MM \in \GrM(\DD)\,$, we write
$\,p^!(\MM)\,$ for the largest submodule of $ \MM $ annihilated by
the action of $\, t \,$, i.~e. $\, p^!(\MM) = \Ker(\MM
\xrightarrow{t\,\cdot\,} \MM[1])\,$. Then, if $\,\MM \in
\Fdim(\DD)\,$ and $ \MM \ne 0 $, we have $\,p^!(\MM) \ne 0\,$.
So the assumption $
\tau(\NN) \ne 0 $ implies that $\,p^!(\tau \NN) \ne 0 \,$. On the other
hand, $\,p^!(\NN) \cong \MO[1]\,$ and $ \tau(\MO[1]) =
\tau(\MO)[1] = 0 $, so $\, \tau(p^! \NN) = 0 \,$. Since $\,
p^!(\tau \NN)= p^!(\NN) \cap \tau(\NN)= \tau(p^! \NN) \,$, we
arrive at contradiction. It follows that $ \tau(\NN) = 0 $.

$(2)$ For all $ k \in \Z $, we have the exact sequences $\, 0\to
M_k/M_{k-1} \to M/M_{k-1} \to M/M_k \to 0 \,$ defined by the
filtration inclusions. Combining these together, we get the exact
sequence of graded $ \DD$-modules
\begin{equation}
\la{nseq}
0 \to \MO \to \NN[-1] \xrightarrow{t} \NN \to 0\ .
\end{equation}
Since $ \tau(\NN) = 0 $, applying the torsion functor $\,\tau \,$
to \eqref{nseq} yields
\begin{equation}
\la{lseq}
0 \to \tau_1(\MO) \to \tau_1(\NN[-1]) \to \tau_1(\NN) \to
\tau_2(\MO) \to \, \ldots
\end{equation}
By Theorem~\ref{cons}$(2)$ and Lemma~\ref{fram}, the last term of
\eqref{lseq} can be identified as
$$
\tau_2(\MO) \cong \RR^1 \omega (\pi \MO) \cong
\RR^1 \omega[\pi(\DO \I)] \cong \RR^1 \omega (\pi \DO) \otimes_A \I\ ,
$$
so $\, \tau_2(\MO) = 0\,$ by Lemma~\ref{calc}$(2)$. We get
thus the exact sequence
\begin{equation}
\la{kseq}
0 \to \tau_1(\MO) \to \tau_1(\NN[-1])
\xrightarrow{t} \tau_1(\NN) \to 0\ .
\end{equation}
Now, \eqref{rtau} implies that $ \tau_1(\MO)_{\ge 0} $ is
bounded: i.~e. there is an integer $\, d \ge 0 \,$, such that $\,
\tau_1(\MO)_{d} \ne 0 \,$, while $\, \tau_1(\MO)_{k} =  0
\,$ for all $\, k > d \,$. It follows then from \eqref{kseq} that
$\, t \,$ acts as a unit on $\, \tau_1(\NN)_{\ge d} \,$: in
particular, we have $\, p^! (\tau_1(\NN)_{\ge d}) = 0 \,$. But
$\,\tau_1(\NN)_{\ge d} \,$ is a submodule of $\, \tau_1(\NN) \,$, which, by definition, is torsion. Hence, $\, p^!
(\tau_1(\NN)_{\ge d}) = 0 \,$ forces $\,\tau_1(\NN)_{\ge
d} = 0\,$. Now, by induction, it follows from \eqref{kseq} that
$\,\dim\,\tau_1(\NN)_{k-1} = \sum_{j=k}^{d}
\dim\,\tau_1(\MO)_{j}\,$ for $\,k = 0,\,1,\,\ldots\, ,\, d\,$.
In particular, by \eqref{rtau}, $\, \dim\,\tau_1(\NN)_{-1} =
\dim\,\tau_1(\MO)_{\ge 0} = n \,$, and
$\,
\dim\,\tau_1(\NN)_{\ge -1} = \sum_{k\ge 0}
\dim\,\tau_1(\NN)_{k-1} \,$ is finite.

$(3)$ Applying $\,\omega \pi\,$ to \eqref{nseq} gives rise to the
exact sequence
\begin{equation}\la{sseq}
0 \to \omega\pi(\MO) \to \omega\pi(\NN[-1]) \xrightarrow{t}
 \omega\pi(\NN) \to \RR^1 \omega(\pi \MO) \to \, \ldots
\end{equation}
Since $\, \pi(\MO) \cong \pi(\DO \I) \,$, we have
$\,\omega\pi(\MO)_{\ge 0} \cong \omega\pi(\DO \I)_{\ge 0} \cong
\DO \I\,$, see \eqref{idmap}, and $\,\RR^1 \omega(\pi \MO) \cong
\RR^1 \omega(\pi \DO \I) \cong \RR^1 \omega(\pi \DO) \otimes_A \I
= 0 \,$, by Lemma~\ref{calc}$(2)$. Hence, truncating \eqref{sseq}
at negative degrees, we get the exact sequence
\begin{equation}
\la{fseq}
0 \to \DO \I \to \omega\pi(\NN[-1])_{\ge 0} \xrightarrow{t}
\omega\pi(\NN)_{\ge 0} \to 0\ .
\end{equation}
The last statement of the proposition follows.
\eproof

Next, we consider the functorial exact sequence \eqref{exfun},
with $\, \MM = \NN \,$. By Proposition~\ref{propN}$(1)$, the first
term of this sequence is zero, so we have
\begin{equation}
\la{exf1}
0 \to \NN \xrightarrow{\eta_{\NN}} \omega\pi(\NN)
\to \tau_1(\NN) \to 0 \ ,
\end{equation}
Since the canonical filtration on $M$ is positive, $\,\NN_k = M
\,$ for all $\, k < 0 \,$. Thus, setting $\,L := \omega
\pi(\NN)_{-1} \,$ and $\, V := \tau_1(\NN)_{-1} \,$, we get
from \eqref{exf1} the exact sequence of $A$-modules
\begin{equation}
\la{exf2}
0 \to M \xrightarrow{\eta} L \to V \to 0 \ .
\end{equation}

Now, replacing  $ A $ by its one-point extension $\, B = A[\I]\,$,
we lift \eqref{exf2} to an exact sequence of $B$-modules, as
follows. First, we regard $ M $ as a $ B$-module by restricting
scalars via the algebra homomorphism $\,\btheta:\,B \to A \,$, see
\eqref{E17}. Next, we set $\,\bL := L \oplus \c \,$ and make $ \bL
$ a $B$-module by defining its structure map $\, \varphi:\,\I
\otimes \c \cong \I \to L \,$ to be the degree $0$ component of
the canonical embedding $\,\DO \I \into \omega \pi(\NN[-1])_{\ge
0}\,$ in \eqref{fseq}. Every $A$-module homomorphism $ M \to L $
extends then to a unique $B$-module homomorphism $ M \to \bL $,
since $ \Hom_{A}(M, L) \cong \Hom_B(M, \bL) $ via $\,f \mapsto
(f,\,0)\,$. In particular, the map $\, \eta \,$ in \eqref{exf2}
extends to an embedding $\,\bi:\, M \into \bL \,$, and we write $
\bV := \bL/M $ for the cokernel of $ \bi $. Clearly, $\, \bV \cong
V \oplus \c \,$ as a vector space, and $\, \underline{\dim}(\bV) =
(n,\,1)$, by Proposition~\ref{propN}$(2)$. Summing up, we have
constructed an exact sequence of $ B$-modules
\begin{equation}
\la{exf3}
0 \to M \xrightarrow{\bi} \bL \to \bV \to 0 \ ,
\end{equation}
with the quotient term being of dimension $ (n,1) $. Moreover,
using Lemma~\ref{recoll}, we may regard $ M $ as a $ \Pi^\blambda(B)$-module.
\bprop
\la{piext}
The $ B$-module structure on $ \bL $ defined
above admits a {\rm unique} extension to $\Pi^\blambda(B)$, making
$\, \bi:\, M \to \bL \,$ a homomorphism of $
\Pi^\blambda(B)$-modules.
\eprop

We will give a homological proof of this proposition, using
Theorem~\ref{lift} of Section~\ref{DPA}. As explained in (the
proof of) Theorem~\ref{lift}, a $\Pi^\blambda(B)$-module structure
on a $B$-module $ \bM $ is determined by an element of $\,
\End(\bM) \otimes_{\eB} \Omega^1 B\,$, lying in the fibre of
$\,\blambda \,(\,=\, \blambda \cdot \id\,)\,$ under the evaluation
map
\begin{equation}
\la{par1}
\partial_{\bM}:\,\End(\bM) \otimes_{\eB} \Omega^1(B) \to
\End(\bM)\ ,\qquad f \otimes d \mapsto f \Delta_B(d)\ .
\end{equation}
In particular, the given $ \Pi^\blambda(B)$-module structure on $
M $ is determined by an element $\,\delta_M \in \End(M)
\otimes_{\eB} \Omega^1(B) \,$, such that $\,\partial_M(\delta_M) =
\id_M \,$. The $B$-module embedding $\,\bi \,$ induces an
embedding of $ B$-bimodules: $\,\End(M) \into \Hom(M,\,\bL) \,$,
and hence the natural map
\begin{equation}
\la{par2}
 \End(M) \otimes_{\eB} \Omega^1(B)
 \into \Hom(M,\,\bL) \otimes_{\eB} \Omega^1(B) \ .
\end{equation}
Since $ \Omega^1(B) $ is a projective bimodule, this last map is
also an embedding, and we  identify $\, \End(M) \otimes_{\eB}
\Omega^1(B) \,$ with its image in $\,\Hom(M,\,\bL) \otimes_{\eB}
\Omega^1(B)\,$ under \eqref{par2}.

Now, consider the commutative diagram
\begin{equation}
\la{d444}
\begin{diagram}[small, tight]
\End(\bL)\otimes_{\eB}\Omega^1(B)& \rTo^{\tbi_*} &
\Hom(M,\bL)\otimes_{\eB}\Omega^1(B) \\
\dTo^{\partial_{\bL}} && \dTo^{\partial_{M,\bL}} \\
\End(\bL)& \rTo^{\bi_*} & \Hom(M,\bL) \\
\end{diagram}
\end{equation}
where $\,\partial_{M, \bL}\,$ is the evaluation map at $
\Delta_B$, $\,\bi_* \,$ is the restriction (via $\,\bi\,$), and
$\, \tbi_* := \bi_* \otimes \id \,$. Note that $ \bi_* $ and $
\tbi_* $ are both surjective. With above identification, we have
\begin{equation}
\la{cond1}
\bi_*(\blambda) =
\partial_{M, \bL}(\delta_M) = \bi \ ,
\end{equation}
and our problem is to show that there is a unique element $\,
\delta_{\bL} \in \End(\bL)\otimes_{\eB}\Omega^1(B) \,$, such that
\begin{equation}
\la{cond2}
\partial_{\bL}(\delta_{\bL}) = \blambda\quad \mbox{and}\quad
\tbi_*(\delta_{\bL}) = \delta_M \ .
\end{equation}

To solve this problem homologically, we interpret the top and the
bottom rows of \eqref{d444} as $2$-complexes of vector spaces $
X^\bullet $ and $ Y^\bullet $, with nonzero terms in degrees $ 0 $
and $ 1 $ and differentials $ \tbi_* $ and $ \bi_* $,
respectively. The pair of maps $\, (\partial_{\bL},\, \partial_{M,
\bL})\,$ yields then a morphism of complexes $\,\partial^\bullet:
X^\bullet \to Y^\bullet \,$ with mapping cone
\begin{equation} \la{cone} C^\bullet(\partial) := \left[\,
 0 \to X^0
\xrightarrow{d^{-1}} X^1 \oplus Y^0 \xrightarrow{d^0} Y^1 \to 0\,
\right]\ .
\end{equation}
By definition, the differentials in $ C^\bullet(\partial) $ are
given by $\,d^{-1} = (-\tbi_*,\,\partial_{\bL})^T \,$ and $\, d^0
= (\partial_{M, \bL},\,\bi_*) \,$. So  \eqref{cond1}
can be interpreted by saying that $\, (-\delta_{M}, \blambda) \in
X^1 \oplus Y^0 \,$ is a $0$-cocycle in $ C^\bullet(\partial) $.
Then, the cohomology class
\begin{equation}\la{cohcl} c(\blambda, \delta_M) := [(-\delta_{M},
\blambda)]
\end{equation}
represented by this cocycle, vanishes in $\,h^0(C^\bullet)\,$ if
and only if there is $\, \delta_{\bL} \in X^0 \,$ such that $\,
d^{-1}(\delta_{\bL}) = (-\delta_{M}, \blambda)\,$, i.~e.
\eqref{cond2} holds. Clearly, if it exists, such $ \delta_{\bL} $
is unique if and only if $ d^{-1} $ is injective, i.~e. if and
only if $\, h^{-1}(C^\bullet) = 0\,$. Now, a simple
calculation (as in the proof of Theorem~\ref{lift}) shows that
$$
h^{0}(C^\bullet) \cong \HH_0(B,\,\Hom(\bV,\bL))\quad \mbox{and}\quad
h^{-1}(C^\bullet) \cong \HH_1(B,\,\Hom(\bV,\bL))\ .
$$
Proposition~\ref{piext} thus boils down to proving Lemma~\ref{h10}
and Lemma~\ref{c0} below.
\blemma \la{h10} $\ \HH_1(B,\,\Hom(\bV,\bL)) = 0 \,$.
\elemma
\bproof Recall that $\,L := \omega\,\pi(\NN)_{-1} \,$. Now, in
addition, we set $\, L_0 := \omega\,\pi(\NN)_{0} \,$ and make this
a $B$-module by restricting scalars via $\,\btheta: B \to A \,$.
Then, the $A$-module homomorphism $\,t_L:\, L \to L_0 \,$ induced
by the action of $ t $ extends to a unique $B$-module homomorphism
$\,\bL \to L_0 \,$, which we denote by $ \bt $. By Proposition~\ref{propN}$(3)$,
$\, t_L \,$ is surjective, and hence so is $\, \bt \,$. It
is easy to see that $\,\Ker(\bt) \cong B\ei $, so
we have the exact sequence of $B$-modules
\begin{equation} \la{res0} 0 \to B \ei \xrightarrow{\biota} \bL \xrightarrow{\bt} L_0 \to
0\ .
\end{equation}
Since $\, B \ei $ is projective, tensoring \eqref{res0} with $
\bV^* := \Hom(\bV,\,\c) $ yields
$$
0 \to \Tor^B_1(\bV^*,\, \bL) \to \Tor^B_1(\bV^*,\, L_0) \to
\bV^* \otimes_B B \ei \to \bV^*\otimes_B \bL \to
\bV^*\otimes_B L_0 \to 0\ .
$$
On the other hand, since $ \bV $ is finite-dimensional, for
an arbitrary $B$-module $ \bM $, we have natural isomorphisms
$\,\HH_n(B,\,\Hom(\bV,\bM)) \cong \Tor^B_n(\bV^*,\, \bM)\,$.
So the above exact sequence can be identified with
\bg{eqnarray} \la{HHn} &&\  0 \to \HH_1(B,\,\Hom(\bV,\bL)) \to
\HH_1(B,\,\Hom(\bV,L_0)) \to  \\
&& \quad \HH_0(B,\,\Hom(\bV, B\ei)) \to \HH_0(B,\,\Hom(\bV,\bL))
\to \HH_0(B,\,\Hom(\bV,L_0)) \to 0\ . \nonumber
\end{eqnarray}
To prove the lemma it thus suffices to show that
\begin{equation} \la{ttt}
\HH_1(B,\,\Hom(\bV,L_0)) \cong \Tor^B_1(\bV^*,\, L_0) = 0 \ .
\end{equation}
Since $ L_0 $ is an  $A$-module, we can compute this last Tor,
using the spectral sequence
\begin{equation}\la{sps}
 \Tor^A_p(\Tor^B_q(\bV^*,\, A),\, L_0)\
\underset{p}{\Rightarrow}\ \Tor^B_{p+q}(\bV^*,\,L_0)
\end{equation}
associated to the algebra map $\btheta: B \to A $. By
Lemma~\ref{LL6}$(4)$, this map is flat, so \eqref{sps} collapses at $q=0$, giving an isomorphism
\begin{equation}\la{ttt1}
\Tor_1^B(\bV^*,\,L_0) \cong \Tor^A_1(V^*,\,L_0)\ .
\end{equation}
Now, for each $\,k\ge 0\,$, we set $\,L_k := \omega
\pi(\NN)_{k}\,$ and write $\, F_k \,$ for the kernel of the map
$\,L_0 \xrightarrow{t^k} L_k\,$ induced by the action of $\, t^k
\in \DD \,$ on $ \omega\pi(\NN)$. By Proposition~\ref{propN}$(3)$,
the maps $ t^k $ are surjective for all $ k \ge 0 $, and thus $\,0=F_0
\subseteq F_1 \subseteq F_2 \subseteq \,\ldots \,$ is an
$A$-module filtration on $ L_0 $. By Proposition~\ref{propN}$(2)$, this filtration is exhaustive, so that
$\,\dlim\, F_k \cong \bigcup_{k=0}^{\infty} F_k =
L_0\,$, while, by \eqref{fseq}, we have exact sequences
\begin{equation}
\la{filtf}
0 \to F_k \to F_{k+1}\to \DO_{k+1} \I \to 0\quad , \quad \forall\ k\ge 0 \ .
\end{equation}
Since  $\, \DO_{k+1} \I \,$ are projective $A$-modules for $ k \ge 0 $, we conclude from \eqref{filtf} that $ F_k $ are projective for $\, k \ge 1 \,$. The
direct limits of families of projective modules are flat, hence so is
$ L_0 =  \dlim\, F_k $. We have $\,
\Tor^A_1(V^*,\,L_0)=0\,$, and \eqref{ttt} follows from \eqref{ttt1}.
\eproof

\blemma
\la{c0}
$\ c(\blambda,\,\delta_M) = 0\,$ in $\,\HH_0(B,\,\Hom(\bV,\bL))\,$.
\elemma

\bproof By \eqref{HHn} and \eqref{ttt}, we have the exact sequence
\begin{equation} \la{HH0} 0 \to \HH_0(B,\Hom(\bV, B\ei))
\xrightarrow{\biota_*} \HH_0(B,\Hom(\bV,\bL)) \xrightarrow{\bt_*}
\HH_0(B,\Hom(\bV,L_0)) \to 0 ,
\end{equation}
where $ \biota_* $ is induced by the inclusion $\,\biota:\,B\ei
\into \bL \,$ and $ \bt_* $ by the projection $\,\bt:\,\bL \onto
L_0\,$ in \eqref{res0}. We show first that
$\,\bt_*(c(\blambda,\,\delta_M)) = 0 \,$. For
this, we consider the image of the diagram \eqref{d444} under the
natural projection $\,\bt\,$. Under this projection,
the equations \eqref{cond1} become
\begin{equation} \la{cond11} \bi_*(\bt) = \partial_{M, L_0}(\tbt_*(
\delta_M)) = \bt\circ \bi\ ,
\end{equation}
where $\, \tbt_*:\, \Hom(M, \bL)\otimes_{\eB}\Omega^1(B) \to
\Hom(M, L_0)\otimes_{\eB}\Omega^1(B) \,$ is defined by $\, f
\otimes \,\omega \mapsto (\bt \circ f) \otimes \omega\,$. Now,
$\,\bt \,$ induces a morphism of mapping cones \eqref{cone}
associated to \eqref{d444} and its projection, which, in turn,
induces a map $ \bt_* $ on cohomology. The class
$\,\bt_*(c(\blambda,\,\delta_M)) \in \HH_0(B,\Hom(\bV,L_0)) \,$
can thus be viewed as an obstruction for the existence of an
element $\,\delta_{\bL, L_0} \in \Hom(\bL,
L_0)\otimes_{\eB}\Omega^1(B)\,$ satisfying
\begin{equation}
\la{cond22}
\partial_{\bL, L_0}(\delta_{\bL, L_0}) =
\bt \quad \mbox{and}\quad
\tbi_*(\delta_{\bL, L_0}) = \tbt_*(\delta_M) \ .
\end{equation}
We will show that $\,\bt_*(c(\blambda,\,\delta_M)) = 0 \,$ by
constructing such an element explicitly.

By universal property of tensor algebras, the filtered
algebra homomorphism $\,R \onto \D\,$ lifts to a
{\it graded} algebra homomorphism $\,R \onto \DD \,$,
so we may regard graded $\DD$-modules as graded $R$-modules.
The action of $ \DD $ on $ \omega\,\pi(\NN) $ yields an
$A$-bimodule map $\, \DDer(A) \to \Hom(L, \,L_{0})\,$, taking
$\,\Delta_A \mapsto t_L \,$, which in composition with
$\,\DDer(B) \to \DDer(A)\,$ gives a $B$-bimodule homomorphism $\,\DDer(B) \to \Hom(L, \,L_{0})\,$, $\,\Delta_B \mapsto t_L \,$, and hence an element
\begin{equation} \la{bbf} \delta_{L, L_0} \in \Hom_{\eB}(\DDer(B),
\,\Hom(L, \,L_{0})) \cong \Hom(L, \,L_{0}) \otimes_{\eB}\Omega^1(B)\ .
\end{equation}
Now, let $\,\balpha: L = e \bL \into \bL \,$ be the natural inclusion, with $ \bL/L = \ei \bL \cong \c $. Viewing $ L $ as a $B$-module
via $ \btheta $ makes it a $B$-module map.
Dualizing $\ba$ by $ L_0 $ and tensoring with $ \Omega^1(B)
$, we get then the commutative diagram
\begin{equation} \la{d66}
\begin{diagram}[small, tight]
\Hom(\bL, L_0)\otimes_{\eB}\Omega^1(B)& \rTo^{\tba_*} &
\Hom(L,L_0)\otimes_{\eB}\Omega^1(B) \\
\dTo^{\partial_{\bL, L_0}} && \dTo^{\partial_{L,L_0}} \\
\Hom(\bL, L_0)& \rTo^{\ba_*} & \Hom(L, L_0) \\
\end{diagram}
\end{equation}
with $\,\partial_{L, L_0}(\delta_{L,L_0}) = \ba_*(\bt) = t_L \,$.
Since $\, e \in B \,$ acts as identity on $ L_0 $ and as zero
on $ \bL/L $, we have
$\,
\HH_0(B,\,\Hom(\bL/L, L_0)) \cong (\bL/L)^* \otimes_B L_0 = 0\,$.
Hence, there is  $\, \delta_{\bL, L_0} \in \Hom(\bL,
L_0)\otimes_{\eB}\Omega^1(B)\,$, such that $\,\partial_{\bL,
L_0}(\delta_{\bL,L_0}) = \bt\,$ and $\,\tba_*(\delta_{\bL, L_0}) =
\delta_{L, L_0}\,$. A direct calculation using
$\,\bi = \ba \circ \eta \,$ shows that this element satisfies
also \eqref{cond22}.

The existence of $\,\delta_{\bL, L_0}\,$ implies that $\,
\bt_*(c(\blambda, \delta_M)) = 0 \,$. Returning to \eqref{HH0}, we
see then that $\,c(\blambda, \delta_M) =
\biota_*(\tilde{c})\,$ for some $\,\tilde{c} \in \HH_0(B,\Hom(\bV,
B\ei))\,$. Now, to show that $\,\tilde{c} = 0\,$ we consider the trace
map $\,\Tr:\, \Hom(\bV,\, B\ei) \to \Hom(\bV, \bL)
\to \End(\bV) \to \c $, $\,f \mapsto \tr_{\bV}[\,\bp \circ \biota \circ f\,]\,$, where $\, \biota \,$ is defined
in \eqref{res0}, $\, \bp\, $ is the canonical
projection in \eqref{exf3}. Since $ \biota $ and $ \bp $ are homomorphisms, this induces a linear map
$$
\Tr_*:\ \HH_0(B,\Hom(\bV, B\ei)) \xrightarrow{\biota_*}
\HH_0(B,\Hom(\bV, \bL)) \xrightarrow{\bp_*}
\HH_0(B,\End(\bV)) \xrightarrow{\tr_*} \c\ .
$$
We claim that $\,\Tr_*\,$ is an isomorphism. Indeed, it is easy to
see that $\,\Tr_* \ne 0 \,$, while
$$
\HH_0(B,\Hom(\bV, B\ei)) \cong \bV^* \otimes_B B
\ei \cong \bV^* \ei \cong (\ei \bV)^* \cong \c\ .
$$
Now, since $\,\bp \circ \bi = 0 \,$, we have $\,\bp_*(c(\blambda,
\delta_M)) = [\,\blambda \cdot \id_{\bV}\,]\,$, and hence
$$
\Tr_*(\tilde{c}) := \tr_{\bV}[\,\bp_*\,\biota_*(\tilde{c})\,]
= \tr_{\bV}[\,\bp_*(c)\,] = \tr_{\bV}[\,\blambda \cdot
\id_{\bV}\,] = 0\ .
$$
It follows that $\,\tilde{c} = 0\,$ and $\, c(\blambda,\,\delta_M)
= 0 \,$, finishing the proof of the lemma and
Proposition~\ref{piext}. \eproof

Now, by Proposition~\ref{piext}, the given $B$-module structure on
$\,\bV = \bL/M \,$ extends to a unique $ \Pi^\blambda(B)$-module
structure, making \eqref{exf3} an exact sequence of
$\Pi^\blambda(B)$-modules. Since $\,\ei \bV \cong \ei \bL\,$, the natural map $\, \Pi \ei \otimes_{U} \ei \bV \cong
\Pi \ei \otimes_{U} \ei \bL \to \bL \,$ is an isomorphism, which
in combination with projection $\,\bp: \bL \to \bV \,$ becomes
$\bmu_{\bV}: \Pi \ei \otimes_{U} \ei \bV \to \bV $. It follows that
$\, \Ker(\bp) \cong \Ker(\bmu_{\bV})\,$, and thus $\, M \cong
\btheta^*(\bV)\,$. This completes Step~2.

\subsection{The map $ \omega $ is injective and
$\Gamma$-equivariant}
\la{st3} For $\Pi$-modules $ \bV $ and $ \bV' $ of dimension
$ \bn = (n,1) $, we will show that
\begin{equation}
\la{eqv}
\btheta^*(\bV) \cong \btheta^* (\bV')\quad
\Longleftrightarrow \quad \bV' \cong \bV^{\sigma_{\omega}} \quad
\mbox{for some}\ \, \omega = u^{-1} du \in \Omega^1 X \ ,
\end{equation}
where $ \bV^{\sigma} $ denotes the $\Pi$-module $ \bV $ twisted by
an automorphism $ \sigma \in \Aut_S \,\Pi^\blambda(B) $.

We begin by describing the action \eqref{auto4} in terms of
generators of $ \Pi^\blambda(B) \,$ (see Proposition~\ref{gene}).
\blemma \la{actgen} The homomorphism $\,\sigma:\,\Omega^1 X \to
\Aut_S\,\Pi\,$ is given by
\begin{equation}
\la{gener} \sigma_\omega(\ha) = \ha\ ,\quad \sigma_\omega(\hv_i) =
\hv_i \ ,\quad \sigma_\omega(\hw_i) = \hw_i \ ,\quad
\sigma_\omega(\hd) = \hd + \widehat{\omega(d)} \ ,
\end{equation}
where $\, \omega \in \Omega^1 X \,$ acts on $\, d \in \DDer(A)
\,$ via the natural identification
\begin{equation*}
\la{Kara}
\Omega^1 X = (\Omega^1 A)_\natural \cong
\Hom_{\eA}((\Omega^1 A)^\star,\,A)
\cong \Hom_{\eA}(\Der(A, \,\AA),\,A) \ .
\end{equation*}
\elemma
\bproof By Lemma~\ref{DPApres}, we can define \eqref{auto4} in
terms of relative differentials
\begin{equation}
\la{relset}
\sigma:\, \Omega^1 X \stackrel{\alpha}{\too}
(\Omega_S^1 B)_{\natural} \stackrel{\tsigma}{\too}
\Aut_{B}[T_B (\Omega_S^1 B)^\star] \to
\Aut_S\,\Pi^\blambda(B)\ ,
\end{equation}
where $\, \alpha\,$  is now an isomorphism. In fact,
with identification \eqref{omegaS}, the elements of
$\, (\Omega^1_S B)_\natural = \Omega_S^1 B\!/[B, \Omega_S^1 B] \,$
can be represented by matrices $\, \hat\omega = \begin{pmatrix} \omega & 0 \\
0 & 0
\end{pmatrix}
\,$ with $\,\omega \in (\Omega^1 A)_\natural = \Omega^1 X\,$,
and $\, \alpha \,$ is given explicitly by $\, \omega \mapsto \hat\omega\ \mbox{mod}\, [B,
\Omega_S^1 B]\,$. It follows then that $ \sigma_\omega $ acts on $
\Pi^\blambda (B) $ as in \eqref{gener}.
\eproof

We may also describe the algebra map $\, \btheta:\,\Pi^\blambda(B)
\to \D \,$  in terms of generators of $ \Pi^\blambda(B) $:
\begin{equation}
\la{bth}
\btheta(\ha) = \overline{a}\ , \quad \btheta(\hd) = \overline{d} \ ,
\quad \btheta(\hv_i) = \btheta(\hw_i) = 0\ ,
\end{equation}
where $ \overline{a} $ and $ \overline{d} $ denote the classes of
$\, a \in A \,$ and $\, d \in \DDer(A) \,$ in $\, T_A \DDer(A) \,$ modulo the ideal $\, \langle \Delta_A - 1 \rangle \,$.
Comparing now \eqref{gener} and \eqref{bth}, we get
\blemma
\la{Lproj}
The group homomorphism $ \hsigma:\,\Omega^1 X
\stackrel{\sigma}{\too} \Aut_S\, \Pi \to \Aut_\c\,\D\,$ induced by
$ \sigma $  is given by
\begin{equation}
\la{ath}
\hsigma_\omega(a) = a \ ,\quad
\hsigma_\omega(\partial) = \partial + \omega(\partial)
\ ,
\end{equation}
where $\,a \in A \,$, $\,\partial \in \Der(A)\,$ and $\,\omega \in
\Omega^1 X\,$.
\elemma

In particular, if $\, \omega = u^{-1} du \,$ for some $\,u \in \Lambda \,$, then $\, \hsigma_\omega(a) = a
= u \,a \, u^{-1} \,$ (since $A$ is commutative), and $\,
\hsigma_\omega(\partial) = u \, \partial \, u^{-1} \,$. Thus, the
induced action of $\, \Lambda \subset \Omega^1 X \,$ on $ \D $ is
given by {\it inner} automorphisms. In contrast, $\,\Lambda\,$
does not act by inner automorphisms on the whole of $
\Pi^\blambda(B) $.

Now, by functoriality, $\, \bV' \cong \bV^{\sigma_{\omega}} \,$
implies $\, \bL' \cong \bL^{\sigma_{\omega}}\,$ and $\, \btheta^*(\bV')
\cong \btheta^*(\bV)^{\sigma_{\omega}} \,$ for
any $ \omega \in \Omega^1 X $. So the map $ \CC_n(X,\,\I) \to
\R(\D) $ induced by $ \btheta^* $ is equivariant under the action
of $ \Omega^1 X$. On the other hand, $ \btheta^*(\bV) $ is a $
\D$-module, on which the twisting by $\, \omega \,$ acts via
\eqref{ath}, i.~e. $\,\btheta^*(\bV)^{\sigma_{\omega}} =
\btheta^*(\bV)^{\hsigma_{\omega}} \,$. Since the inner
automorphisms induce trivial auto-equivalences, we have
$\, \btheta^*(\bV)^{\sigma_{\omega}} \cong \btheta^*(\bV) \,$
for $ \omega = u^{-1} du $. This
proves the implication `$\, \Leftarrow \,$' in \eqref{eqv} and,
in combination with Step~1, yields a $\Gamma$-equivariant map
$\, \omega_n:\, \overline{\CC}_n(X, \I) \to
\gamma^{-1}[\I]\,$.

It remains to show that $ \omega_n $ is injective. For this,
we will use the following result, which is a version
of \cite{BW2}, Lemma~10.1, and \cite{NS}, Lemma~3.2. (In particular,
the proof given in the last reference extends trivially to our situation.)

\blemma
\la{L6}  Let $ M $ be a (nonzero) ideal of $ \D $ equipped
with two good filtrations $ \{ M_k \} $ and $ \{ M_k' \} $, such
that the associated graded modules $ \MO $ and $ \MO' $ are both
torsion-free. Then, there is $ \, k_0 \in \Z \,$, such that $ \,
M_k = M_{k-k_0}' \,$ for all $ k \in \Z $.
\elemma

Given two $ \Pi$-modules $ \bV $ and $ \bV' $ of dimension
$\, \bn\,$, we set $\, \bL := \Pi \ei \otimes_{U} \ei \bV$, $\, L
:= e \bL\,$, $\,M := \btheta^*(\bV)$, and similarly for $ \bV' $.
In addition, we denote by $\, \eta:\, M \into L\,$ and $\, \bi:\,
M \into \bL\,$ the natural inclusions (and similarly for $ M' $).
\bprop
\la{bstr} If $\, M \cong M' \,$ as $\D$-modules, then $\,
\bL \cong \bL' \,$ as $B$-modules. \eprop
\bproof First, we show that every $\D$-module isomorphism $\,f:\,M
\to M' \,$ lifts to an $A$-module isomorphism $\,f_{L}:\,L \to
L'\,$.  For this, we identify $ L $ as in Lemma~\ref{iso2}, filter it by $ \{F_k L\}$ as in Section~\ref{st1}, and set $\,\LL := \bigoplus_{k \in \Z} L/F_k L\,$.
By \eqref{iso3}, we have
$\, \Delta_A \cdot x \equiv x\,(\mbox{mod}\,F_0 L)\,$
for all $\, x \in L \,$, so $\,
\Delta_A\,[x]_k = [x]_{k+1} = t\,[x]_k \,$ for $ k \ge -1 $.
Since $\,R[t]/\langle \Delta_A - t \rangle \cong \DD \,$, we may
regard $\,\LL_{\ge -1} \,$ as a graded $\DD$-module.

Next, we equip $ M $ with the induced filtration $\, M_k := M \cap
F_k L \, $ via the inclusion $\,\eta:\, M \into L\,$, and put
$\,\NN := \bigoplus_{k\in \Z} M/M_k \,$. The map $\,\eta\,$
naturally extends to $\,\teta:\, \NN \into \LL
\,$, and $\,\NN\,$ becomes a graded $ \DD$-module via the induced
action of $\,R[t]\,$ on $ \LL $. It follows from \eqref{filtin}
that $\, \MO := \bigoplus_{k \in \Z} M_k/M_{k+1} \,$ is a
torsion-free $ \DO$-module, and hence $ \tau(\NN) = 0 $ by
Proposition~\ref{propN}$(1)$. Let  $\, \eta_{\NN}:\,\NN \into \omega \pi(\NN) \,$, see \eqref{exfun}. Since $\,\Coker\,\teta\,$ is finite-dimensional in degree $ \ge -1$, the map $\, \eta_{\NN}
\,$ extends to an embedding: $\,\LL_{\ge -1
} \into \omega \pi(\NN)_{\ge -1}\,$. By induction in grading,
using Proposition~\ref{propN}$(2)$ and
\eqref{fseq}, it is easy to show that this embedding
is an isomorphism.

Now, replacing $ L $ by $ L'$, we repeat the above construction.
The $\D$-module $ M' $ comes then equipped with two filtrations:
one is induced from $ L' $ via  $\, \eta':\,M' \into L'
$, and the other is transferred from $ M $ via $\, f:\, M \stackrel{\sim}{\to} M' $. Both filtrations satisfy the
assumptions of Lemma~\ref{L6} and, hence, coincide up to a shift in degree. Since $ \MO' $ and $ f(\MO) $
have finite codimension in $ \LO' $, this last shift must be $0$
so  $\,M_{k}' = f(M_k) \,$ for all $ k \in \Z $.
The map $ f $ extends then to an isomorphism
$\,\tf:\, \NN \to \NN' \,$ and further, by functoriality, to $\,\omega\pi(\tf):\, \omega\pi(\NN) \to
\omega\pi(\NN') \,$. As a result, we get
$\, \LL_{\ge -1} \cong \omega\pi(\NN)_{\ge -1} \stackrel{\sim}{\to} \omega\pi(\NN')_{\ge -1} \cong
\LL_{\ge -1}' $, which in degree $(-1)$ yields the required extension $\, f_L:\,L \to L' \,$.

Now, with our identifications of $ L $ and $ L' $, the $B$-modules $ \bL $ and $ \bL' $ are determined (up to
isomorphism) by the triples $\, (L,\,\c,\,\varphi)\,$ and $\,
(L',\,\c,\,\varphi')\,$, where $\, \varphi:\,\,\I \into L \,$ and
$\, \varphi':\, \I \into  L' \,$ are the canonical embeddings with
images $\, F_0 L \,$ and $ F_0 L' $ respectively. Since $\,F_0
L\,$ is the kernel of $\,\LL_{-1} \stackrel{t}{\onto} \LL_0\,$, the map $\,f_L
\,$ restricts to $ F_0 L $, giving an isomorphism $\,f_L|_0:\, F_0
L \to F_0 L' \,$. Letting  $\, u := (\varphi')^{-1} \circ (f_L|_0) \circ
\varphi\, \in \Aut_A(\I) \,$ and identifying $\, \Aut_A(\I) =
\End_A(\I)^{\times} \cong A^{\times}\,$ via the action map, we
have $\,u\,\varphi' = \varphi' u = f_L\,\varphi \,$. Hence
\begin{equation}
\la{rem}
\bgg := (u^{-1} f_L,\,\id):\ L\oplus \c \to L'
\oplus \c
\end{equation}
makes the diagram \eqref{1-hom} commutative and thus defines
an isomorphism of $B$-modules $\,\bL \stackrel{\sim}{\to} \bL'\,$.
\eproof

Now, keeping the notation of Proposition~\ref{bstr}, consider two
$ \Pi$-modules $ \bV $ and $ \bV' $ of dimension $ \bn $, with $\,
M \cong M' \,$. Fix an isomorphism $\,f: \,M \to M'\,$ and define
$ \bgg $ as in \eqref{rem}. Taking $\,\omega = u^{-1} d u \in
\Omega^1 X\,$ and twisting $ \bi $ by $ \sigma = \sigma_{\omega}
\in \Aut_S\,\Pi\,$, consider the diagram
\begin{equation} \la{comd3}
\begin{diagram}[small, tight]
\bL^{\sigma} & \rTo^{\bgg} & \bL'\\
\uTo^{\bi} && \uTo_{\bi'} \\
M^{\sigma} & \rTo^{\ f u^{-1} \ } & M'
\end{diagram}
\end{equation}
From the construction of $f$ and $\bgg$, it follows that this diagram is commutative, with all arrows being {\it
$\Pi$-module} homomorphisms and horizontal ones being
isomorphisms. Thus, identifying $\,M^\sigma \cong M' \,$  and
$ \bL^\sigma \cong \bL' \,$ in \eqref{comd3}, we
get two ({\it a priori}\, different) $\Pi$-module structures on $
\bL' $. Both of these are extensions of the given $\Pi$-module
structure on $ M' $. Hence, by
Proposition~\ref{piext}, they must coincide. It follows that $\, \bgg:
\,\bL^\sigma \to \bL'$ is an isomorphism of $\Pi$-modules, which,
by commutativity of \eqref{comd3},  induces an isomorphism
$\, \bV^\sigma \cong \bV' $. This completes Step~3.

\subsection{The equivariance of $ \omega $ under the action of
$\Pic(\D)$} \la{st4}  As in Section~\ref{CMMap}, we will assume that
$ X \ne \A^1 $. By \cite{CH1}, Prop.~1.4, the automorphism group
of $ \D $ is then isomorphic to the product $\, \Aut(X) \ltimes \Omega^1 X \,$:
\begin{equation}
\la{autd}
\Aut(X) \ltimes \Omega^1 X  \stackrel{\sim}{\to} \Aut(\D) \ ,
\quad (\nu,\,\omega) \mapsto \hnu \, \hsigma_\omega\ ,
\end{equation}
where $\, \hnu \in \Aut(\D):\, D \mapsto \nu \,D\,\nu^{-1}\,$, and
$ \hsigma_\omega $ is defined by \eqref{ath}.
Now, for a line bundle $ \F $ on $X$, $\,\End_{\D}(\F \D) \,$ is canonically isomorphic to the ring of
twisted differential operators on $X$ with coefficients in $\F$.
As $X$ is affine, this last ring is isomorphic to $\D$,
so the set of all algebra isomorphisms: $\,\D \to \End_{\D}(\F\D)
\,$ is non-empty and equals $\, \psi_0\,\Aut(\D) \,$, where $
\psi_0 $ is a fixed isomorphism. By \cite{CH1}, Th.~1.8, the
isomorphism $\, \psi_0 $ can be chosen in such a way that $
\psi_0\,|_A = \id \,$: specifically, fixing dual bases $\, \{\alpha_i\} \subset \F \,$,
$\,\{\beta_i\} \subset \F^\vee $, and identifying $\,\End_{\D}(\F
\D) = \F \, \D \,\F^\vee \,$ as in Section~\ref{ad}, we  define
$\, \psi_0:\,\D \stackrel{\sim}{\to} \End_{\D}(\F\D) \,$  by
\begin{equation}
\la{vpo}
\psi_0(a) = a \ , \quad  \psi_0(\partial) = \sum_i\, \alpha_i \,
\partial \, \beta_i\ , \quad a \in A\ ,\quad \partial \in \Der(A)\ .
\end{equation}
With \eqref{autd} and \eqref{vpo}, every isomorphism $\,\psi:\, \D \to \End_{\D}(\F \D) \,$ can then be decomposed as
\begin{equation}
\la{decp}
\psi = \psi_0 \,\hnu\,\hsigma_{\omega}\ ,
\end{equation}
where $\, \nu \in \Aut(X) \,$ and $\, \omega \in \Omega^1 X \,$
are uniquely determined by $ \psi $.

\begin{proof}[Proof of Proposition~\ref{Lex}] Given a line bundle $ \I $
and an invertible bimodule $\,\P = (\D\L)_\vp\,$, with $\,\vp:
\,\D \stackrel{\sim}{\to} \End_{\D}(\D\L)\,$, we set $\, \tau :=
\vp|_A \,$, $\, \J := \L \tau(\I) \,$, $\, \F := \L^\tau =
\tau^{-1}(\L) \,$, and $\,\psi = \vp^{-1}:\,\D \to \End_{\D}(\F
\D)\,$, as in Section~\ref{ad}. To construct an isomorphism
$\,\bvp\,$, satisfying Lemma~\ref{Lex}, we
decompose $\,\psi \,$ as in \eqref{decp}, and extend each
factor through $ \btheta $. Since $
\psi_0 $ and $ \hsigma_\omega $ act on $A$ as identity, we have
$\, \nu = \psi|_A = \tau^{-1}\,$, so $\, \hnu = \htau^{-1} \,$ in
\eqref{decp}.  Thus we set
$$
\bvp \,:\  \Pi^\blambda(A[\J]) \xrightarrow{\sigma_\omega} \Pi^\blambda(A[\J]) \xrightarrow{\ttau^{-1}} \Pi^\blambda(A[\J^\tau])
\xrightarrow{\bvp_0} \End_{\Pi^\blambda(B)}(\bP)\ ,
$$
where $\,\sigma_\omega \,$ is
defined in Section~\ref{ad} (see \eqref{auto4}, with $ B $
replaced by $ A[\J] $) and $\,\ttau^{-1}\,$ is induced by
$\, A[\J] \to A[\J^\tau] \,$. The relation
$\,\btheta\,\sigma_{\omega} = \bar{\sigma}_\omega\,\btheta\,$
is then immediate, by Lemma~\ref{Lproj}.

It remains to define $ \bvp_0 \,$. To this end, we use
identification \eqref{endp}. Since $\, \J^\tau = \F \I\,$, we have then $\, A[\J^\tau] \cong \tilde{\F} \otimes_{\tA} B
\otimes_{\tA} \tilde{\F}^\vee \into \tilde{\F} \otimes_{\tA}
\Pi^\blambda(B) \otimes_{\tA} \tilde{\F}^\vee\,$,
which we take as a definition of $\,\bvp_0 \,$ on
$\,A[\J^\tau]\,$. This induces the identity on $ A $, as required. Next, we construct a bimodule isomorphism:
\begin{equation}
\la{mdo}
\Der_S(A[\F\I], \, A[\F\I]^{\otimes 2}) \to
\tilde{\F} \otimes_{\tA} \DDer_S(B)
\otimes_{\tA} \tilde{\F}^\vee\ ,
\end{equation}
using the dual bases for $ \F $ and $ \I $. By Lemma~\ref{DerS},
we first identify the domain of \eqref{mdo} with
\begin{equation}
\la{rai}
\left(
\begin{array}{cc}
 \DDer(A)  & \Der(A,\, \F\I\otimes A)\\*[1ex]
 0 & 0
\end{array}
\right) \bigoplus \left(
\begin{array}{cc}
 \F\I \otimes (\F\I)^\vee  &  \F\I \otimes A\\*[1ex]
 (\F\I)^\vee & A
\end{array}
\right)
\end{equation}
and the codomain with
\begin{equation*}
\left(
\begin{array}{cc}
 \F \otimes \DDer(A) \otimes \F^\vee & \Der(A, \I \otimes \F)\\*[1ex]
 0 & 0
\end{array}
\right) \bigoplus \left(
\begin{array}{cc}
 \F\I \otimes (\F\I)^\vee  &  \F\I \otimes A\\*[1ex]
 (\F\I)^\vee & A
\end{array}
\right)\ .
\end{equation*}
The first summand of \eqref{rai} is generated by the elements $\, \hd \in e\,\DDer(A)\,e \,$ (see Prop.~\ref{gene}): so we define the map \eqref{mdo} on this first summand by
\begin{equation}
\la{rinh}
\hd = \left(
\begin{array}{cc}
d  & 0 \\
0  & 0
\end{array}
\right)
\mapsto \left(
\begin{array}{cc}
\sum_{i} \alpha_{i} \otimes d \otimes \beta_i & 0 \\
0  & 0
\end{array}
\right)\ ,
\end{equation}
while letting it be the identity on the second. This
yields an isomorphism of bimodules and induces the required
algebra map $ \bvp_0 $. The commutativity
$\,\btheta\,\tilde{\psi}_0 = \btheta\,\psi_0\,$
is verified by an easy calculation, using \eqref{bth}.

To finish the proof of Proposition~\ref{Lex} it remains to show
the uniqueness of  $ \bvp $. For this, arguing as in Proposition~\ref{piext},
it suffices to show that $\, \HH_1(A[\J], \, \Ker\,\btheta^\otimes) = 0\,$, where
$\,\btheta^\otimes := 1 \otimes \btheta \otimes 1\,$, see \eqref{lso1}. Since
$\, A[\J] \cong \tilde{\F} \otimes_{\tA} B \otimes_{\tA}
\tilde{\F}^\vee \,$, see \eqref{lso}, we may identify $\,
\HH_1(A[\J], \, \Ker\,\btheta^\otimes) \cong \HH_1(B,
\,\Ker\,\btheta)\,$. On the other hand, by Lemma~\ref{Lrec1},
$\,\Ker\,\btheta \cong \Pi^{\blambda}(B)\ei \otimes_{U} \ei
\Pi^{\blambda}(B)\,$, which is easily seen to be a flat
$B$-bimodule. Thus $ \HH_1(B, \,\Ker\,\btheta) = 0 $, as required. \end{proof}
\begin{proof}[Proof of Proposition~\ref{omeac}]$(1)$ We will keep
the notation of Proposition~\ref{Lex}. For $\,\P
=\D_{\hsigma_\omega}\,$, we have then $\, \L \cong A \,$,
$\,\varphi = \hsigma_{\omega}\,$, $\, \tau = \id_A \,$ and $\,
\psi = \hsigma_{\omega}^{-1} \,$. Now, since $\, \F = \L^\tau
\cong A \,$, we may choose $\, \psi_0 =\id_{\D} $. Then $\, \bvp =
\sigma_{\omega}^{-1} \,$, and the
bimodule $\,\bP \,$ is isomorphic to $\, \Pi^\blambda(B)\,$ with
left multiplication twisted by $\,\sigma_{\omega}^{-1} \,$. Hence,
for $\, \P=\D_{\hsigma_\omega} \,$, the isomorphism \eqref{eigg}
is given by $\,[\bV] \mapsto
[\bV^{\sigma_{\omega}^{-1}}]\,$, which agrees with our definition of $\, \sigma^*_\omega \,$, see \eqref{act1}.

$(2)$ For  $\, \P = (\D \L)_{\vp} \,$, the map
$\,f_{\P}:\,\CC_n(X, \I) \to \CC_n(X, \J) \,$ is equivariant under
$ \Lambda $ in the sense that
\begin{equation}
\la{eql}
f_{\P} \circ \sigma_{\omega}^* =
\sigma_{\omega_{\tau}}^* \circ f_{\P}\ ,\quad \forall\, u \in \Lambda\ ,
\end{equation}
where $\,\omega = \dlog(u) \,$ and $\, \omega_{\tau}
= \dlog[\tau(u)]\,$. Indeed,
$\,f_{\P} \circ \sigma_{\omega}^* \,$ is induced by tensoring $
\Pi$-modules with the bimodule $\,{}_{\psi}\bP_{\sigma_{\omega}} =
{}_{\psi}\bP \otimes_\Pi \Pi_{\sigma_{\omega}}\,$, on which $\,
 \Pi^\blambda(A[\J]) \,$ acts on the left  via $ \bvp $. Since $\,\ttau\,\sigma_{\omega} =
\sigma_{\omega_{\tau}}\,\ttau \,$, we have
$\,
{}_{\psi}\bP_{\sigma_{\omega}} \cong {}_{\sigma^{-1}_{\omega}
\psi}\bP \cong {}_{\psi \sigma^{-1}_{\omega_{\tau}}}\bP \cong
(\Pi_{\sigma_{\omega_{\tau}}}') \otimes_{\Pi'} ({}_{\psi}\bP)\,$, where $ \Pi' := \Pi^\blambda(A[\J]) $. This implies
\eqref{eql}. Now, it follows from \eqref{eql} that $ f_{\P} $
induces a well-defined map $ \bar{f}_{\P} $ on the quotient
varieties. The map $ \bar{f}_{\P} $ depends only on the class $\, [\P] \in \Pic(\D) \,$, since $ [\P] $ determines $ \vp $ (and
$ \psi = \vp^{-1} $) up to an {\it inner} automorphism of
$\,\D \,$. By Prop.~\ref{Lex}, this means that $ \bvp $ (and
hence, $\,f_{\P}\,$) are determined by $ [\P] $  up to an
automorphism $\, \sigma_{\omega} \in \Aut_S[\,\Pi'\,] \,$ with $\,
\omega = \dlog(u)$, $\, u \in \Lambda \,$. Since
such automorphisms act trivially on $ \overline{\CC}_n(X, \I) $,
the map $ \bar{f}_{\P} $ is uniquely determined by $ [\P]
\in \Pic(\D) $.
\eproof

Finally, we prove the last part of Theorem~\ref{Tmain}.

\bproof[Proof of Theorem~\ref{Tmain}$(c)$] Let $ \bV $ be a
$\Pi^\blambda(B)$-module representing a point of $ \CC_n(X,\I) $.
The class $\,  \omega_n[\bV] \in \gamma^{-1}[\I] \,$
can then be represented by an ideal $ M $ fitting into the exact
sequence
\begin{equation}
\la{lsts}
0 \to M \to \bL \to \bV \to 0\ ,
\end{equation}
where $\,\bL = \Pi\ei \otimes_U \ei \bV \,$. Now, given an
invertible bimodule $ \P = (\D\L)_{\vp} $, we write  $ \Pi' =
\Pi^\blambda(A[\J]) $, $\,U' = \ei \Pi' \ei\,$ and observe that
$\,
\bP \otimes_{\Pi} (\Pi\ei \otimes_U \ei\Pi) \otimes_{\Pi} \bP^* \cong
\Pi' \ei \otimes_{U'} \ei\Pi' $,
where $\,\bP \,$ is the progenerator from $\,\Pi \,$ to $\,\Pi'
\,$ determined by $ \P $. On the other hand, we have
$$
{}_{\psi}\bP \otimes_{\Pi} \D \cong {}_{\psi}(\tilde{\F} \otimes_{\tA}
\Pi \otimes_{\Pi} \D) \cong {}_{\psi}(\tilde{\F} \otimes_{\D} \D) \cong
{}_{\psi}(\F\D) \cong (\D\L)_{\vp}= \P\ .
$$
Tensoring now \eqref{lsts} with $ \bP $ shows that the $\Pi'$-modules $\,\bV' := \bP \otimes_{\Pi} \bV \,$ and $ M' := \P \otimes_\D M $ fit into the exact sequence
$\,0 \to M' \to \bL' \to \bV' \to 0\,$,
with $\,\bL' = \Pi'\ei \otimes_{U'} \ei \bV' \,$. This means that
$\, [M'] \in \gamma^{-1}[\J] \,$ corresponds under $\, \omega_n
\,$ to $\, [\bV'] \in \CC_n(X, \J) \,$, verifying the
commutativity of \eqref{CMdim} and finishing the proof of
Theorem~\ref{Tmain} .
 \eproof
\section{Explicit Construction of Ideals. Examples}
\la{explicit}

\subsection{Distinguished representatives}

Given a rank $1$ torsion-free $ \D$-module $M$, we choose an
embedding $\,e :\,M \into Q \,$, where $ Q = \Frac(\D) $. Such an embedding is unique up to automorphism of $Q$. We will fix this automorphism at a later stage of our calculation.
Now, regarding $ M $ and $ Q $ as modules over $\,R=T_A
\DDer(A)\,$, we may try to extend $\,e\,$ to $ L $ through
$\,\eta: M \into L \,$. It is easy to see, however,
that such an extension does not exist in $ \Mod(R)$.
On the other hand, we have
\blemma \la{exta} There is a unique $A$-linear map $ e_L:\,L \to
Q\,$ extending $ e $ in $\Mod(A)$.
\elemma
\bproof Let $\,\eta_*:\,\Hom_A(L,\,Q) \to \Hom_A(M,\,Q) \,$ be the
restriction map. We have $\,\Ker(\eta_*) \cong \Hom_A(V,\,Q) =
0\,$, since $\,V\,$ is a torsion $A$-module, while $ Q $ is
torsion-free. On the other hand, $\,\Coker(\eta_*)\,$ is
isomorphic to a submodule of $\,\Ext_A^1(V,\,Q)\,$, while
$\,\Ext_A^1(V,\,Q) = 0\,$, since $ Q $ is an injective $A$-module.
It follows that $\eta_*$ is an isomorphism.
\eproof

Our aim is to compute $\, e_L \,$  explicitly, in terms of
representation $V$. First, we consider the map
\begin{equation} \la{dera} \ad:\ \Hom_A(L,\,Q) \to
\Der_A(R,\,\Hom(L, \,Q))\ ,
\end{equation}
taking $\,f: L \to Q \,$ to the inner derivation $\, \ad_f(r)(x)
:= r f(x) - f(rx) \,$, where $\, r \in R \,$ and $ x \in L $.
Since $\,\Ker(\ad) \cong \Hom_R(L,\,Q) = 0 \,$, the map
\eqref{dera} is injective, and every $\, f \in \Hom_A(L,\,Q) \,$
is uniquely determined by $\, \ad_f \,$. In addition, if $\, f \,$
restricts to an $R$-linear map $\, M \to Q \,$, then
$\,\eta_*(\ad_f) = 0 \,$ in $\,\Der_A(R,\,\Hom(M,\,Q))\,$, and
$\,\ad_f \,$ is determined by a (unique) derivation in
$\,\Der_A(R,\,\Hom(V,\,Q))\,$.
Thus $\, e_L $ is uniquely determined by $\,\delta_V
\in \Der_A(R,\,\Hom(V,\,Q)) \,$ satisfying
\begin{equation}
\la{fder}
e_L(r x) - r e_L(x) = \delta_V(r)[\pi(x)] \
,\quad \forall\,r \in R\ ,\quad \forall \,x \in L \ ,
\end{equation}
where $\,\pi:\,L \to V \,$. Furthermore, by the Leibniz rule,
the restriction map
$$
\Der_A(R,\,\Hom(V,\,Q)) \stackrel{\sim}{\to}
\Hom_{\eA}(\DDer(A),\,\Hom(V,\,Q))
$$
is an isomorphism: we thus need to compute $\,\delta_V \,$ on $ \DDer(A) $ only.

Let $\,\c(X \times X)^{\reg} \,$ be the subring of rational
functions on $\, X \times X \,$, regular outside the diagonal of $
X \times X $. Geometrically, we can think
of $\,\Omega^1(A) \subset \AA \,$ as the ideal of
the diagonal in $X\times X$, and $\, \Omega^1(A)^\star :=
\Hom_{A^{\otimes 2}}(\Omega^1A\,, A^{\otimes 2})$ as the subspace of functions in $\c(X\times X)^{\reg} $ with (at
most) simple poles along the diagonal; the canonical pairing
between $ \Omega^1 A $ and $ \Omega^1(A)^\star$ is the
given by multiplication in $ \O(X \times X) $. Translating this
into algebraic language, we have
\begin{lemma} \la{23}
Let $ \flat $ be the involution on $\,\c(X\times X)^{\reg}\,$ induced by interchanging the factors in $\,X\times X\,$.

$(1)$ The assignment $\,d \mapsto \left[\,d(a)/(a\otimes 1-1\otimes a)\,\right]^{\,\flat}\,$ defines an injective bimodule homomorphism
$\,\nu:\, \DDer(A) \to \c(X\times X)^{\reg}$.

$(2)$ If $\,a\in A \,$, $\, d\in \DDer(A)\,$ and
$\,d(a)=\sum_j f_j\otimes g_j\,$, then $\,
[d,\,a]=\sum_j g_j\,\Delta_A\, f_j\,$.
\end{lemma}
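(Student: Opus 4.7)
The plan is to prove part $(2)$ first and deduce part $(1)$ from it. The common engine is the identification $\DDer(A) \cong \Hom_{\eA}(\Omega^1 A, \eA)$ from Section~\ref{DPA}, under which each $d \in \DDer(A)$ is realized as $d = \Phi \circ \Delta_A$ for a unique $\eA$-linear map $\Phi : \Omega^1 A \to \eA$, combined with the fact that $\eA = A \otimes \oA$ is commutative (since $A$ is).

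For $(2)$, I would first establish the symmetry
\[
 d(x) \cdot \Delta_A(a) \;=\; \Delta_A(x) \cdot d(a) \qquad \text{in}\ \eA\,, \qquad \forall\, x, a \in A.
\]
This is immediate from $d(y) = \Phi(\Delta_A(y))$: viewing $\Delta_A(x)\,\Delta_A(a)$ as $\Delta_A(x) \cdot \Delta_A(a)$ with $\Delta_A(x) \in \eA$ acting on $\Delta_A(a) \in \Omega^1 A$, the $\eA$-linearity of $\Phi$ yields $\Phi(\Delta_A(x)\,\Delta_A(a)) = \Delta_A(x) \cdot d(a)$, while the symmetric reading gives $\Phi(\Delta_A(a)\,\Delta_A(x)) = \Delta_A(a) \cdot d(x) = d(x) \cdot \Delta_A(a)$, and the two are equal by commutativity of $\eA$. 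I would then unwind the inner bimodule structure of Section~\ref{DPA}: for $\phi \in \DDer(A)$ and $b, c \in A$, the element $b \phi c \in \DDer(A)$ evaluates as $(b\phi c)(x) = \phi(x) \cdot (c \otimes b)$, juxtaposition denoting multiplication in $\eA$. Applying this to both sides of the claim gives $[d,a](x) = d(x)(a \otimes 1 - 1 \otimes a) = d(x) \cdot \Delta_A(a)$ on the left, and $\sum_j (g_j \Delta_A f_j)(x) = \Delta_A(x) \cdot \sum_j f_j \otimes g_j = \Delta_A(x) \cdot d(a)$ on the right. The two agree by the symmetry identity.

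Part $(1)$ is then essentially automatic. Rewritten as $d(a)/\Delta_A(a) = d(b)/\Delta_A(b)$ in the total quotient ring of $\eA$, the symmetry identity says that $\nu(d)$ is independent of the choice of $a$ (whenever $\Delta_A(a)$ is not a zero-divisor). For regularity outside the diagonal, at each point $P \in X \times X$ off the diagonal one picks $a \in A$ with $\Delta_A(a)(P) \ne 0$, witnessing regularity of $\nu(d)$ at $P$; this gives $\nu(d) \in \c(X \times X)^{\reg}$. Injectivity is immediate: if $\nu(d) = 0$ then $d(a) = 0$ for some non-constant $a$, hence by the symmetry identity $d(x) = 0$ for every $x \in A$, so $d = 0$. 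The bimodule-homomorphism property reduces, via the Hom picture, to matching right $\eA$-multiplication on the output of $\Phi$ with the natural $A$-bimodule structure on $\c(X \times X)^{\reg}$ after the $\flat$-swap. The main obstacle is purely bookkeeping: keeping the inner/outer bimodule conventions of Section~\ref{DPA} straight so that the commutativity argument delivers the precise sandwich $g_j \Delta_A f_j$ (rather than its mirror image), and so that $\flat$ correctly accounts for the mismatch between right $\eA$-action on the Hom side and the evident $A$-bimodule structure on rational functions on $X \times X$.
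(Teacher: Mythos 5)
Your argument is correct. The paper states the lemma without a written proof, as a direct ``translation into algebraic language'' of the preceding geometric remarks (identifying $(\Omega^1 A)^\star$ with rational functions on $X\times X$ having at most a simple pole along the diagonal, the pairing with $\Omega^1 A$ being multiplication); your route---factor $d$ through the universal derivation as an $\eA$-linear map into the commutative ring $\eA$ and read off the symmetry $d(x)\,\Delta_A(a)=\Delta_A(x)\,d(a)$---is the same idea made explicit, and it cleanly delivers well-definedness, regularity off the diagonal, injectivity, and part $(2)$ from a single identity.
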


Now, to compute $\,\delta_V(d)\in \Hom(V,Q)\,$ we identify $ \Hom(V,Q) \cong Q \otimes V^* \,$. There is a
natural action of $ \, R^{\rm e} := R \otimes R^{\circ}\,$ on this
space: $\, R^{\rm e} \to  Q \otimes \End(V^*) \,$,
which is the tensor product of the dual representation
$\,\varrho^*:\, R^\circ \to \End(V^*) $ with composition of
natural maps $\, R \onto \Pi^1(A) \cong \D \into Q $. Abusing  notation, we will write $\, a \otimes b^* \,$ for the
image of $ a \otimes b^\circ \in R^{\rm e} $ in
$\, Q \otimes \End(V^*)\,$. Restricting to $ \AA \subset R^{\rm e} $, we now get a ring homomorphism
$\,\AA \to Q\otimes \End(V^*)\,$. Since $ \dim(V) < \infty $,
this homomorphism takes the elements $\,a\otimes 1-1\otimes
a\,$, with $\,a\in A\setminus \c \,$, to units in $ Q\otimes \End(V^*) $  and hence extends canonically to
\begin{equation*}
\c(X\times X)^{\mathrm{reg}} \to Q\otimes\End(V^*)\,.
\end{equation*}
Combining this last homomorphism with the embedding of Lemma~\ref{23}, we define a bimodule map
\begin{equation}
\la{nur}
\nu_V:\, \DDer(A) \to Q\otimes \End(V^*)\ ,\quad \Delta_A
\mapsto 1\otimes \id_{V^*}\ .
\end{equation}

We can now compute $\,\delta_V\,$ in terms of
$\nu_V$. To this end, we choose dual bases $\,\{\v_i\}\,$ and
$\{\w_i\}$ for $ \I $ and $ \I^\vee $; by Proposition~\ref{gene},
this gives generators $ \ha $, $ \hd $, $\hv_i$ and $ \hw_i $ for
$ \Pi $. Identifying $\,L_\infty \cong V_{\infty}
\cong \c\,$, we think of $\,\hv_i\,$ and $ \hw_i $ acting on
$\,\bL\,$ as linear maps $ \v_i:\,\c \to L$ and $\w_i:\, L \to
\c$, i.~e. as elements of $L$ and $L^*$. Similarly,
when acting on $\bV$, $\, \hv_i$ and $\hw_i$ give rise
to vectors $ \sv_i\in V$ and covectors $ \sw_i\in V^*$. Note that
$\sv_i=\pi \v_i$ and $ \sw_i \pi = \w_i$, where $\,\pi:\,L\onto
V\,$. Further, we fix  $\,
\I \into A \,$ and identify $ L $ as in Lemma~\ref{iso2}. Then we
twist $\, e:\,M \into Q \,$ by an automorphism of $Q$ in such a
way that $\, e_L(\v) = \v \,$ for all $ \v \in \I \subset A
\subset Q \,$. This is possible, since $\, e_L:\,L \to Q \,$ is an
$A$-linear extension of $ e $, by Lemma~\ref{exta}.
With this notation, we have
\begin{proposition} The derivation
$\,\delta_V:\, \DDer(A)\to Q\otimes V^*\,$ is given by
$\, \delta_V(d)= \,\sum_i \,\nu_V(d)[\v_i\,\sw_i]\,$.
\end{proposition}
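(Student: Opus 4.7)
My plan is to verify the formula first for the canonical element $d = \Delta_A$ by a direct computation from the presentation of $L$, and then extend to general $d$ by a defect-vanishing argument hinging on the fact that $V$ is a torsion $A$-module while $Q$ is torsion-free.

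For the base case $d = \Delta_A$, I would apply the relation $(\Delta_A - 1)\,r\v_i = \sum_j \eps(\hw_j \hr \hv_i)\,\v_j$ in $L$ from Lemma~\ref{iso2}, using $\eps(\hw_j \hr \hv_i) = \sw_j(\bar r\,\sv_i)$, the fact that $\Delta_A$ maps to $1 \in \D \subset Q$, and the normalization $e_L(\v_k) = \v_k$. Applying the $A$-linear map $e_L$ to both sides yields
\[
\delta_V(\Delta_A)[\bar r\,\sv_i] \;=\; e_L(\Delta_A r\v_i) - \Delta_A\, e_L(r\v_i) \;=\; \sum_j \sw_j(\bar r\,\sv_i)\,\v_j.
\]
Because $V$ is generated as an $R$-module by $\{\sv_i\}$, the vectors of the form $\bar r\,\sv_i$ span $V$ as a $\c$-vector space, so $\delta_V(\Delta_A) = \sum_j \v_j \sw_j$ in $Q \otimes V^*$, which agrees with $\sum_i \nu_V(\Delta_A)[\v_i \sw_i]$ since $\nu_V(\Delta_A) = 1 \otimes \id_{V^*}$.

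For general $d$, set $D(d) := \delta_V(d) - \sum_i \nu_V(d)[\v_i \sw_i] \in \Hom(V, Q)$; I plan to show $D(d) \in \Hom_A(V, Q)$ and then invoke $\Hom_A(V, Q) = 0$. By Lemma~\ref{23}(2), $[d, a] = \sum_j g_j \Delta_A f_j$ in $R$ for every $a \in A$, where $d(a) = \sum_j f_j \otimes g_j$. Since $\delta_V$ is an $A$-derivation vanishing on $A \subset R$, the bimodule commutator satisfies
\[
[\delta_V(d), a] \;=\; \delta_V([d, a]) \;=\; \sum_j g_j\, \delta_V(\Delta_A)\, f_j \;=\; \sum_j g_j \Bigl(\sum_i \v_i \sw_i\Bigr) f_j
\]
by the base case. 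Similarly, since $\nu_V$ is an $\AA$-bimodule map (induced by the ring homomorphism $\c(X\times X)^{\reg} \to Q \otimes \End(V^*)$), the same expression is obtained for $[\sum_i \nu_V(d)[\v_i \sw_i], a]$. Subtracting gives $[D(d), a] = 0$ for all $a \in A$, i.e., $D(d)$ is $A$-linear; since $V$ is finite-dimensional (hence $A$-torsion) and $Q$ is $A$-torsion-free (as $A \hookrightarrow \D \subset Q$), we conclude $\Hom_A(V, Q) = 0$ and so $D(d) = 0$.

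The main technical obstacle will be the careful identification of the bimodule commutator on $\Hom(V, Q) \cong Q \otimes V^*$ with the image under $\delta_V$ and $\nu_V$ of the ring-theoretic commutator $[d, a]$ in $R$; both rely on Lemma~\ref{23}(2), but the bookkeeping requires handling the left $A$-action on $Q$ and the right $A$-action through $V^*$ consistently. Once this identification is in place, the rest of the argument is a direct application of the torsion-theoretic vanishing of $\Hom_A(V, Q)$.
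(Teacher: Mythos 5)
Your proposal is correct and proceeds along essentially the same route as the paper: establish the base case $\delta_V(\Delta_A)=\sum_i \v_i\sw_i$, then pass to general $d$ via Lemma~\ref{23}$(2)$ and the derivation identity $\delta_V([d,a])=[\delta_V(d),a]$. The only variation is the last step: the paper rewrites $[\delta_V(d),a]=\sum_j g_j\,\delta_V(\Delta_A)\,f_j$ as $(1\otimes a^*-a\otimes 1)\,\delta_V(d)=(\sum_j g_j\otimes f_j^*)\,\delta_V(\Delta_A)$ and divides by the unit $1\otimes a^*-a\otimes 1$ of $Q\otimes\End(V^*)$, whereas you form the defect $D(d)=\delta_V(d)-\nu_V(d)[\delta_V(\Delta_A)]$, verify $[D(d),a]=0$ for all $a\in A$, and invoke $\Hom_A(V,Q)=0$ (Lemma~\ref{exta}) — an equivalent vanishing, since that torsion/torsion-free contrast is exactly what underlies the invertibility the paper uses. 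Your base-case computation from the presentation of $L$ in Lemma~\ref{iso2} is also correct; the paper obtains the same identity a bit more directly from the relation $\Delta_A = 1 + \sum_i \v_i\w_i$ holding in $e\Pi e$.
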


\begin{proof} First, using the fact that $\Delta_A$ acts
as $\, 1 + \sum_i \v_i \w_i \,$ on $ L $ and as identity on $Q$,
it is easy to compute $\,\delta_V(\Delta_A)=\sum_i \v_i\sw_i \,$.
Now, if $\,r=[d,\,a] \in \DDer(A)\,$, then
$\, \delta_V(r)=[\delta_V(d), a]\,$, since $\,\delta_V(a)=0\,$. On
the other hand, by Lemma~\ref{23}$(2)$,
we have $\,[d,a]=\sum_j\, g_j \,\Delta_A\, f_j\,$, so
$\,\delta_V(r)=\sum_j \,g_j\,\delta_V(\Delta_A)\,f_j\,$. Thus,
$[\delta_V(d), a]=\sum_j\, g_j\,\delta_V(\Delta_A)\,f_j$, or, if
we think of $\delta_V(d)$ as an element of $Q\otimes V^*$, then
$\, (1\otimes a^*- a\otimes 1)\,\delta_V(d)=\left(\sum_j g_j\otimes
f_j^*\right)\, \delta_V(\Delta_A)\,$.
Lemma \ref{23}$(1)$ shows now that $\,\delta_V(d)=\nu_V(d)
\,[\delta_V(\Delta_A)]\,$.
\end{proof}

Now, we can state the main result of this section.
For $\,\v\in \I\,$ and $\,d \in \DDer(A)\,$, we define
\begin{equation}
\label{kappa} \kappa(d,\v) := \v - (1\otimes d^*  - d\otimes
1)^{-1}\, \delta_V(d)\,[1 \otimes \sv]\ \in Q\ ,
\end{equation}
where $\, \sv = \pi(\v) \in V \,$ and $\, (1\otimes d^*  -
d\otimes 1)^{-1} \in Q\otimes \End(V^*)  \,$.

\bthm \la{excon}
Let $\, \bV \,$ be a $\Pi^\blambda(B)$-module of
dimension $ \bn = (n,1) $ representing a point in $\,
\CC_n(X,\,\I) \,$. Then the class
$\,\omega[\bV] \in \R(\D) \,$ can be represented by the
(fractional) ideal $M$ generated by the elements
$\, \det{\!}_{V^*}(1\otimes a^* - a \otimes 1)\, \v \,$ and
$\,\det{\!}_{V^*}(1\otimes d^* - d \otimes 1)\, \kappa(d, \v)\,$,
where $\,a\in A\,$,\, $\,d \in \DDer(A) $ and $\v\in\I$.
\ethm
\noindent Theorem~\ref{excon} needs some explanations.

1. Formally, by \eqref{kappa}, $\, \kappa(d,\v) \,$ is well defined only when $\, 1\otimes d^* - d \otimes 1 \,$ is invertible in $\,
Q\otimes\End(V^*) \,$. It is easy to see, however, that the
product $\, \det{\!}_{V^*}(1\otimes d^*- d\otimes 1)\, \kappa(d,
\v) \in M \,$ makes sense for all $\,d \in \DDer(A) \,$
(cf. \cite{BC}, Remark~2, p.~83).

2. For generators of $ M $ it suffices to take the above determinants with $\, a \,$, $\, d \,$ and $\, \v \, $ from some (finite) sets generating $A$, $\, \DDer(A)$ and the ideal $ \I$.

\begin{proof}  By \eqref{iso}, the class $\,\omega(\bV)\,$ can be
represented by $\,\widetilde M = \Ker[\pi: \,L\to V]\,$. Our goal
is to show that the two kinds of determinants given in the
proposition generate $ M:=e_L(\widetilde M) $. To simplify the notation, we denote the elements of $ \I $ (resp., $\I^\vee$) and the corresponding elements of $V $ (resp., $ V^*$) by the same letter.
Using the Leibniz rule, for any $r\in R$ and $ m \ge 1 $,  we have
\begin{equation}\la{le}
\delta_V(r^m)=\left(\sum_{s=0}^{m-1} r^s\otimes
(r^*)^{m-s-1}\right)\delta_V(r)=\frac{1\otimes(r^*)^m- r^m\otimes
1}{1\otimes r^*- r\otimes 1}\,\delta_V(r)\  ,
\end{equation}
provided $\, 1\otimes r^*- r\otimes 1\in Q\otimes \End(V^*)\,$ is
invertible. Now, consider the characteristic polynomial
$p(t)=\chi_r(t):=\det{\!}_\rho(r-t\,\id_V)$ of $r\in R$ in the
representation $V$. It is clear that, for
any $x\in L$, $\,p(r)x$ lies in the kernel of $\pi:
L\onto V$, thus $p(r)x\in \widetilde M$. To compute its image
under $e_L$, we write
$\, e_L(p(r)x)=p(r)e_L(x)+ \delta_V(p(r))[1\otimes \overline x]\,$, where $\,\overline x=\pi(x)\,$.
Using \eqref{le} and the fact that $p(t)=\chi_r(t)$ annihilates $r^*
\in\End(V^*)$, we get
$\,\delta_V(p(r))= -(p(r)\otimes 1)(1\otimes r^* - r\otimes
1)^{-1}\delta_V(r)\,$. As a result, for $\,x=\v \in \I\,$,
\begin{equation}\la{rel}
e_L(\chi_r(r)\v)=\chi_r(r)\left(\v-( 1\otimes r^*-r\otimes 1
)^{-1}\delta_V(r)[1\otimes \sv]\right)\in M\,.
\end{equation}
Choosing different $r\in R$, we obtain in this way various
elements of $M$. In particular, for $r=a\in A$, we have
$\delta_V(a)=0$, so \eqref{rel} produces the elements
of the first kind $\chi_a(a)\v\in M$. On the other hand, taking $r=d\in\DDer(A)$ results in $\chi_d(d)\kappa(d,\v)$,
which are the elements of the second kind in $M$.

Finally, a simple filtration argument shows that the elements
$\chi_a(a)\v$ and $\chi_d(d)\v$, with $a$,$\,d$ and $\v$
running over some generating sets of $A$, $\,\DDer(A)$ and $\I $,
generate a submodule
$\widetilde N\subset \widetilde M$ of finite codimension in $L$.
Hence $\widetilde N = \widetilde M$, and the images of these elements generate thus $M=e_L(\widetilde M)$.
\end{proof}
\subsection{Examples}
\la{examples}
\subsubsection{The affine line}
\la{weyl0} Let $ X = \A^{\! 1} $. Choosing a global coordinate on
$X$, we identify $ A = \O(X)\cong \c[x] $. In this case,
$\,\DDer(A)\,$ is a free bimodule of rank $ 1 $; as a
generator of $\,\DDer(A)\,$, we may take the derivation $y$
defined by $\,y(x)=1\otimes 1\,$. It is easy to check that
$\,\Delta_A = yx-xy \,$ in $ \DDer(A)$. The algebra
$\,R = T_A \DDer(A)\,$ is isomorphic to the free algebra $
\c\langle x,\, y\rangle\,$, and $\,\Pi^{1}(A)\cong \c\langle x, y
\rangle/ \langle xy - yx + 1 \rangle \,$ is the Weyl algebra
$A_1(\c)$. The map $\nu$ of Lemma~\ref{23} is given by
\begin{equation*}
\la{nua} \nu(y)=(1\otimes x-x\otimes 1)^{-1}\,, \qquad
\nu(\Delta)=1\ .
\end{equation*}
All line bundles on $X$ are trivial, so we only need to
consider $B=A[\I]$ with $\I=A$. The $n$-th Calogero-Moser variety
$\, \CC_n := \CC_n(X, A) \,$ can be described as the space of
equivalence classes of matrices
$$
\{(\sX, \sY, \sv, \sw)\,:\, \sX \in \End(\c^n), \, \sY \in
\End(\c^n),\, \sv \in \Hom(\c, \c^n), \, \sw \in \Hom(\c^n, \c)\}\ ,
$$
satisfying the relation $\,\sY\sX- \sX\sY = \id_n+ \sv \sw\,$,
modulo the natural action of $ \GL_n(\c) $:
$$
(\sX,\, \sY,\, \sv, \,\sw)\ \mapsto\ (g\sX g^{-1}, \,g\sY g^{-1},\, g
\sv,\,\sw g^{-1}) ,
 \quad g \in \GL_n(\c)\ .
$$
If we choose $\v=1$ as a generator of $\I=A$, then the ideal $M$
of $\D\cong \Pi^1(A)$ corresponding to a point $(\sX, \sY, \sv,
\sw)$ is given by
\begin{equation*}\la{maf}
M = \D \cdot \det{\!}(\sX - x\,\id_n)\ +\ \D \cdot \det{\!}(\sY-
y\,\id_n)\, \kappa\ ,
\end{equation*}
where $\, \kappa=1-\sv^t(\sY^t-y\,\id_n)^{-1}(\sX^t-x\,\id_n)^{-1}\sw^t$.
This agrees with the description of ideals of $ A_1(\c) $ given in \cite{BC}.

\subsubsection{The complex torus}
\la{weyl} Let $ X = \c^* $. We identify $ A =\O(X)$ with $\c[x,
x^{-1}]$, the ring of Laurent polynomials. As in the affine line
case, $\DDer(A)$ is freely generated by the
derivation $y$ defined by $y(x)=1\otimes 1$. The algebra $R$ is isomorphic to the free product $ \c\langle
x^{\pm 1}, y \rangle := \c[x, x^{-1}] \star \c[y] $, and $\,
\Delta_A = yx-xy \,$ in $ R $. The matrix description of the
Calogero-Moser spaces $\CC_n$ and the formulas for the
corresponding fractional ideals of $\D\cong \Pi^{1}(A)=\c\langle
x^{\pm 1}, y \rangle/ \langle xy - yx + 1 \rangle \,$ are the same
as above, except for the fact that $x$ and $\sX$ are now
invertible. A new feature is that $A$ has nontrivial units
$x^r$, $r\in \Z$. The corresponding group $\Lambda$ can be
identified with $\Z$ and its action on $ \CC_n $ is given by
$$
r. (\sX, \,\sY,\, \sv, \,\sw) = (\sX,\,\sY + r\sX^{-1},\,\sv,\,\sw)\
, \quad r \in \Z  \ .
$$
Thus, by Theorem~\ref{Tmain}, the classes of ideals of $\,\D\cong
\Pi^1(A)\,$ are parameterized by the points of the quotient
variety $\,\overline{\CC}_n = \CC_n/\Z\,$. It is worth mentioning that one may choose a different generator
for the  bimodule $\DDer(A)$: for example, $\,z=yx,$
instead of $y$. Then $\,\Delta_A = z-xzx^{-1} $, which gives
an alternative matrix description of $\CC_n $ and the corresponding ideals.

\subsubsection{A general plane curve}
\la{genpc} Let $ X $ be a smooth curve in $ \c^2 $ defined by the
equation $\, F(x,y)=0\,$, with $\, F(x,y) :=
\sum_{r,s}a_{rs}x^ry^s \in \c[x, y]\,$. In this case, the algebra
$\, A \cong \c[x,y] / \langle F(x,y)\rangle\,$ is generated by $x$
and $y$ and the module $\,\Der(A)\,$ is (freely) generated by the
derivation $\,\partial\,$ defined by
$\,\partial(x)= F'_y(x,y)\,$,$\ \partial(y)= - F'_x(x,y)\,$.
The bimodule $\,\DDer(A)\,$ is generated  by the
derivation $ \Delta = \Delta_{A} $ and the element $\,z\,$ defined
by
\begin{equation*}
    z(x)=\sum_{r,s} a_{rs}\frac{x^ry^s\otimes 1- x^r\otimes
    y^{s}}{y\otimes 1-1\otimes y}\ ,\qquad
    z(y)=-\sum_{r,s} a_{rs}\frac{x^r\otimes y^s-1\otimes x^ry^s}{x\otimes 1-1\otimes x}\ .
\end{equation*}
These generators satisfy the following commutation relations
\begin{equation}
\la{rr2}
[z,\, x]=\sum_{r,s} a_{rs}\sum_{k=0}^{s-1}y^{s-k-1}\Delta y^kx^r
    \ ,\qquad
[z,\, y]=-\sum_{r,s}a_{rs}\sum_{l=0}^{r-1}y^sx^{r-l-1}\Delta x^l    \,.
\end{equation}
By Proposition~\ref{gene}, the algebra $\,\Pi^\blambda(B) \,$ is
then generated by the elements $\,\hat{x} $, $\,\hat{y} $,
$\,\hat{z} $, $\,\hv_i $, $\,\hw_i $ and $\, \hdel $, subject to
the relations \eqref{rr2} and \eqref{r1}. The
assignment $\,x\mapsto x$, $\,y\mapsto y$, $\,z\mapsto\partial$,
$\,\Delta\mapsto 1 $ extends to an isomorphism between $\Pi^1(A)$
and the ring $ \D $ of differential operators on $X$.
The bimodule map $\nu$ of Lemma~\ref{23} is given by
\begin{equation}
\la{nuplane} \nu(z)=-\frac{\sum_{r,s} a_{rs}y^s\otimes
x^r}{(1\otimes x-x\otimes 1)(1\otimes y-y\otimes 1)}\,, \qquad
\nu(\Delta)=1\,.
\end{equation}

Now, let us describe generic points of the varieties
$ \CC_n(X, \I) $; for simplicity, we consider only the case when
$ \I $ is trivial. Choose $\,n\,$ distinct points $\, p_i = (x_i,\,y_i)\in X\,$,
$\,i=1,\dots, n$,\, and define
\begin{equation}
\la{ldata}
(\sX,\,\sY,\,\sZ,\,\sv,\,\sw) \in \End(\c^n)\times \End(\c^n)
\times \End(\c^n)\times \Hom(\c, \c^n)\times \Hom(\c^n,\c)
\end{equation}
by the following formulas
\begin{equation}
\la{XYm} \sX =\mathrm{diag}(x_1,\dots,x_n)\ ,\ \sY
=\mathrm{diag}(y_1,\dots,y_n)\ ,\ \sv^t=-\sw =(1,\,\dots\,,\,1)\ ,
\end{equation}
\begin{equation*}
    \sZ_{ii}=\alpha_i\quad\text{and}\quad
    \sZ_{ij}= \frac{F(x_j,\, y_i)}{(x_i-x_j)(y_i-y_j)}
    \quad (\text{for}\ i\ne j)\ ,
\end{equation*}
where $\,\alpha_1,\dots ,\alpha_n \,$ are arbitrary scalars. Then,
a straightforward calculation, using the relations \eqref{rr2}, shows that the assignment
\begin{equation*}
\hat{x} \mapsto \sX\ ,\quad  \hat{y} \mapsto \sY\ ,\quad \hat{z}
\mapsto \sZ\ ,\quad \hv \mapsto \sv\ , \quad \hw \mapsto \sw\ ,
\quad \hdel \mapsto \id_n + \sv\,\sw
\end{equation*}
extends to a representation of $\,\Pi^\blambda(B)\,$, with $\,B=
A[A]\,$ and $ \blambda = (1, -n) $, on the vector space $\,\bV =
\c^n \oplus \c\,$.

\vspace{1ex}

\remark\ The matrix $ \sZ $ defined above is a generalization of
the classical {\it Moser matrix} in the theory of integrable
systems (see \cite{KKS}).

\vspace{1ex}

To illustrate Theorem~\ref{excon} we now describe the fractional
ideal representing the class $ \omega[\bV] $ for an arbitrary $
[\bV] \in \CC_n(X, \I)$. We consider first the case when $
\I $ is trivial. In that case, we identify $\, \I=\I^\vee = A $
and choose $\,\v = \w = 1\,$ as the generators of $\,\I\,$ and
$\,\I^\vee $. A representation $ \bV=\c^n \oplus \c $ may then be
described by the matrices \eqref{ldata}, which, apart from
\eqref{rr2}, satisfy the following relations
$$
F(\sX,\sY)=0\ ,\quad [\sX,\,\sY] = 0\quad \mbox{and}\quad \sdel
= \id_n+\sv\,\sw \ .
$$
The dual representation $\,\varrho^*:\, \Pi^\circ \to \End(\bV^*)
\,$ is given by the transposed matrices.

Now, \eqref{kappa} together with \eqref{nuplane} show that
$\,\kappa = \kappa(z, 1)\in Q \,$ is given by
\begin{equation*}
\la{kappz}
\kappa=1+\sv^t\,(\sZ^t-z\,\id_n)^{-1}(\sX^t-x\,\id_n)^{-1}(\sY^t-y\,\id_n)^{-1}
 F(\sX^t,\, y\,\id_n)\, \sw^t \ .
\end{equation*}
Thus, if $\, [\bV] \in \CC_n(X, A)\,$ is determined by the data
$\,(\sX,\,\sY,\sZ,\,\sv,\,\sw)\,$, then the class
$\,\omega[\bV] \,$ is represented by the (fractional) ideal
\begin{equation*}\la{pca}
M = \D\cdot\det(\sX-x\,\id_n) + \D \cdot \det(\sY-y\,\id_n) +
\D\cdot \det(\sZ-z\,\id_n)\,\kappa\ .
\end{equation*}
In the general case, when $ \I $ is arbitrary, $\,\kappa \,$ is
replaced by
\begin{equation}\la{kappag}
\kappa(\v)=\v +\sum_i\,\left(\,
\sv^t(\sZ^t-z\,\id_n)^{-1}(\sX^t-x\,\id_n)^{-1}(\sY^t-y\,\id_n)^{-1}
F(\sX^t,\, y\,\id_n)\,\sw_i^t\,\right)\,\v_i\,,
\end{equation}
and the corresponding  class $\, \omega[\bV] \in \gamma^{-1}[\I]
\,$ is given by
\begin{equation}\la{pci}
M = \sum_i [\,\D \cdot \det(\sX-x\,\id_n)\,\v_i + \D \cdot
\det(\sY-y\,\id_n)\,\v_i + \D \cdot \det(\sZ-
z\,\id_n)\,\kappa(\v_i)\,]\ .
\end{equation}
%

%
\subsubsection{A hyperelliptic curve} 
This is a special plane curve described by the equation $\,y^2 = P(x)\,$, where $\,P(x)= \sum_s a_s x^s \,$ 
is a polynomial with simple roots. Some of the above formulas simplify in this case.
We have $A \cong \c[x,y] / \langle y^2- P(x) \rangle\,$, $\,\Der(A)\,$ is freely generated by
$\,\partial\,$, with $\partial(x)= 2y$ and $\partial(y)= P'(x)$, and
the bimodule $\,\DDer(A)\,$ is generated by $ \Delta $ and the element $\,z\,$ defined by
\begin{gather*}
    z(x)=y\otimes 1+1\otimes y\,,\qquad
    z(y)=(P(x)\otimes 1-1\otimes P(x))/(x\otimes 1-1\otimes x)\ .
\end{gather*}
The commutation relations \eqref{rr2} in $ \DDer(A) $ are 
\begin{gather}
\la{rrr2}
    [z,\, x]=y\Delta +\Delta y
    \,,\qquad
    [z,\, y]=\sum_{s} a_{s}\sum_{l=0}^{s-1}x^{s-l-1}\Delta x^l
    \,.
\end{gather}

Now, for a hyperelliptic curve, a point of $\CC_n(X,\I)$ is determined by the following data: $\,(1)$ a representation of $A$ on the vector space $\,V=\c^n \,$, i.e. a pair of matrices
$\,(\sX, \sY) \in \End(\c^n) \times \End(\c^n) $ satisfying 
$\,\sY^2=P(\sX)$; $\,(2)$ a pair of $A$-module maps $\,\I\to V\,$ 
and $\,\I^\vee\to V^*$, with chosen images 
$\sv_i\in V$ and $\sw_i\in V^*$ of dual bases of $ \I $ and $ \I^\vee $; $\,(3)$ a matrix $\,\sZ\in \End(\c^n)\,$, such that
$\,\sX,\,\sY,\,\sZ\,$ and $\,\sdel := \id_n + \sum_i \sv_i\sw_i\,$
satisfy \eqref{rrr2}.  In this case, formula \eqref{kappag} reads
\begin{equation*}
\kappa(\v)= \v-\sum_i\,\left(\,
\sv^t(\sZ^t-z\,\id)^{-1}(\sX^t-x\,\id)^{-1}(\sY^t+y\,\id)\,
\sw_i^t\,\right)\,\v_i\ ,
\end{equation*}
and the corresponding ideal is given by \eqref{pci}.

\appendix
\section{Half-Forms on Riemann Surfaces}
\section*{\bf George Wilson}
In this note I provide a proof for one of the key facts
(Proposition~\ref{main} below) needed to understand the
relationship between deformed preprojective algebras and rings of
differential operators. The note owes a great deal to
conversations with Graeme Segal.

\subsection*{Statement of problem}

Let $\,X \,$ be a compact Riemann surface, and let $\, \Delta \,$
be the diagonal divisor in $\, X \times X \,$.  We have the
inclusion
\begin{equation*}
\label{inc}
\O_{X \times X}(- \Delta) \hookrightarrow \O_{X \times X}(\Delta)
\end{equation*}
of the sheaf of functions that vanish on $\, \Delta \,$ into the
sheaf of functions that are allowed a simple pole on $\, \Delta
\,$. The quotient sheaf $\, \O_{X \times X}(\Delta) / \O_{X \times
X}(- \Delta)$ is supported on the first infinitesimal
neighborhood $\, \Delta_1 \,$ of $\, \Delta \,$.  Similarly, if
$\, \L \,$ is a line bundle on $\, X \,$, we have the sheaf $\,
\D_1(\L) \,$ of differential operators of order $\, \leq 1 \,$ on
$\, \L \,$.  This is usually regarded as a sheaf on $\, X \,$, but
since we can compose a differential operator with a function
either on the left or on the right, it has two commuting
structures of $\, \O_X$-module, so it too can be regarded as a
sheaf on $\, X \times X \,$, again supported on $\, \Delta_1 \,$.

Fix a square root $\, \Omega^{1/2} \,$ of the canonical bundle
$\,\Omega_X \,$; the choice of square root will be immaterial,
because the corresponding sheaves of differential operators $\,
\D(\Omega^{1/2}) \,$ are canonically isomorphic to each other.
Our aim is to understand the following fact stated in \cite{G}.
\begin{proposition}
\label{main}
There is a canonical isomorphism (of sheaves over $\, X \times X
\,$)
$$
\chi \,:\, \O_{X \times X}(\Delta) / \O_{X \times X}(- \Delta)
\to \D_1(\Omega^{1/2}) \ .
$$
\end{proposition}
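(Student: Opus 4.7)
The strategy is to work locally on $X$, describe both sheaves explicitly in a coordinate chart, write down a candidate map in coordinates, and then verify that the formula is intrinsic. This invariance is the whole content of the proposition, and it is precisely the half-form twist that makes it work. Both sheaves are naturally $\O_{X\times X}$-bimodules supported on $\Delta_1$ and, as sheaves on $X$ (via either projection), locally free of rank $2$; they both fit into extensions $0\to\O_X\to (-)\to T_X\to 0$ (the constant/symbol filtration). However, the bimodule structure does not by itself pin down an isomorphism, so one must produce $\chi$ by an explicit local recipe and then check coordinate independence.

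Fix a local coordinate $t$ near a point of $X$, and set $x := t\otimes 1$, $y := 1\otimes t$, so that $s := x-y$ is a local defining equation for $\Delta$. Every germ of $\O_{X \times X}(\Delta)/\O_{X \times X}(-\Delta)$ admits a unique representative of the form $f_0(y)/s + f_1(y)$ with $f_0,\,f_1 \in \O_X$. On the other side, trivializing $\Omega^{1/2}$ by $(dt)^{1/2}$, every germ of $\D_1(\Omega^{1/2})$ is uniquely $a(t)\partial_t + b(t)$. Define
$$
\chi\!\left(\frac{f_0(y)}{s} + f_1(y)\right) \;:=\; f_0(t)\,\partial_t + f_1(t).
$$
This is visibly an isomorphism of $\O_X$-modules, respecting the symbol filtration. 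A direct computation shows that $\chi$ intertwines both the left and the right $\O_X$-actions: on one side, $f(x)/s \equiv f'(y) + f(y)/s \pmod{s}$; on the other, $\partial_t\cdot f = f\cdot\partial_t + f'$ in $\D_1(\Omega^{1/2})$. In particular the induced map on the subquotient sequence $0\to\O_X\to(-)\to T_X\to 0$ is the identity on both ends.

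The crux is verifying that the recipe for $\chi$ does not depend on the coordinate. Under $t \mapsto \tilde{t}$, Taylor expansion gives
$$
\tilde{s} \;=\; \tilde{t}(x)-\tilde{t}(y) \;=\; \tilde{t}'(y)\,s + \tfrac{1}{2}\,\tilde{t}''(y)\,s^{2} \pmod{s^{3}},
$$
so that
$$
\frac{1}{\tilde{s}} \;\equiv\; \frac{1}{\tilde{t}'(y)}\cdot\frac{1}{s} \;-\; \frac{\tilde{t}''(y)}{2\,\tilde{t}'(y)^{2}} \pmod{s}.
$$
On the differential-operator side, because $\Omega^{1/2}$ transforms by $(\tilde{t}')^{1/2}$, the half-density coefficients transform by $(\tilde{t}')^{-1/2}$, and rewriting $\partial_{\tilde{t}}$ in the original trivialization $(dt)^{1/2}$ yields
$$
\partial_{\tilde{t}} \;=\; \frac{1}{\tilde{t}'}\,\partial_t \;-\; \frac{\tilde{t}''}{2\,\tilde{t}'^{\,2}},
$$
the correction term arising precisely from differentiating the twisting factor $(\tilde{t}')^{-1/2}$. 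The two right-hand sides match identically, so $\chi$ is well defined, and the local isomorphisms patch to a global sheaf map.

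The main obstacle is this Schwarzian-type matching in the last step: the coefficient $\tfrac{1}{2}$ must appear in both the Taylor expansion of $\tilde{s}$ and in the change-of-variables formula for the differential operator. Had we started from $\Omega^{\alpha}$ instead of $\Omega^{1/2}$, the twist would produce the correction $-\alpha\,\tilde{t}''/\tilde{t}'^{\,2}$, and agreement fails for every $\alpha\ne 1/2$. This is the intrinsic reason half-forms are forced on us. Once this coefficient calculation is in place, the local recipe glues to a global isomorphism, and the bimodule compatibility verified in Step~2 confirms that $\chi$ is the canonical map asserted by Ginzburg.
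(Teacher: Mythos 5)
Your proof is correct, but it takes a genuinely different route from the one Wilson gives in the appendix. Wilson's published proof rests on two conceptual ingredients: (i) Cauchy's residue description of differential operators, $\D_n(\L) \simeq p_1^*(\L)\otimes p_2^*(\L^*\otimes\Omega_X)\bigl((n+1)\Delta\bigr)\big|_{\Delta_n}$, and (ii) the observation that the Szeg\"o kernel $\gamma = dz_1^{1/2}\,dz_2^{1/2}/(z_1-z_2)$, although defined via a local parameter, has a restriction to $\Delta_1$ that is coordinate-independent --- which Wilson deduces cleanly from the skewness of $\gamma$ in the two variables, rather than by Taylor-expanding. Multiplication by $\gamma$ then inserts a factor of $\Omega^{1/2}$ on each side and converts the quotient sheaf $\O_{X\times X}(\Delta)/\O_{X\times X}(-\Delta)$ into exactly the Cauchy form, with no explicit coordinate computation. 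You bypass both ingredients and verify the invariance of the local formula by hand, matching the factor $1/2$ in the expansion of $1/\tilde{s}$ against the factor $1/2$ in the half-density correction to $\partial_{\tilde{t}}$. I checked the key identities --- $\tilde{s} = \tilde{t}'(y)\,s + \tfrac{1}{2}\tilde{t}''(y)\,s^2 + O(s^3)$ and $\partial_{\tilde{t}} = (\tilde{t}')^{-1}\partial_t - \tfrac{1}{2}\tilde{t}''/\tilde{t}'^{\,2}$ in the $(dt)^{1/2}$-trivialization --- and they agree; the map $\chi$ is well-defined. Interestingly, Wilson himself remarks at the end of the appendix that he originally verified the proposition ``by checking directly that the map $\chi$ defined by this formula is independent of the chosen parameter $z$; however, the calculation is surprisingly complicated (and unilluminating)'' --- so your approach is precisely the one he abandoned. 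As you present it, the calculation is not actually that bad, but Wilson's proof has the merit of \emph{explaining} where the $\Omega^{1/2}$ comes from (it is built into the Szeg\"o kernel), whereas the direct computation merely \emph{discovers} that $\alpha=1/2$ is the exponent that makes the two Schwarzian corrections cancel. Your closing remark that $\alpha \ne 1/2$ fails does recover part of that insight.
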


A consequence is that the sheaf of deformed preprojective algebras
formed from $\, \O_X \,$ is canonically isomorphic to the the
sheaf $\, \D(\Omega^{1/2}) \,$ of differential operators on $\,
\Omega^{1/2}  \,$. This is explained in \cite{G}, Section 13.

The isomorphism in Proposition~\ref{main} does not seem to be a
well-known fact, and at first sight looks puzzling, because there
are no half-forms in the left hand side.  The proof sketched in
the current version of \cite{G} is not very convincing, so it
seems worth recording the following simple explanation shown to me
by Segal: although Proposition~\ref{main} itself does not look
familiar, it can be obtained by combining two familiar facts, of a
slightly different nature. While we are about it, we shall deal
also with a slight generalization, twisting by an arbitrary line
bundle $\, \L \,$ on $\, X \,$.

We use the following notation: $\, \Delta_n \,$ is the $n$th
infinitesimal neighborhood of the diagonal in $\, X \times X \,$,
so that we have a canonical identification
\begin{equation}
\label{ln}
\L \,/\, \L(-(n+1) \Delta) \; \simeq \; \L \,\vert\, \Delta_n \ .
\end{equation}
The two projections  $\, X \times X \to X \,$ are denoted by $\,
p_1 \,$ and $\, p_2 \,$. If $\, U \,$ is a simply-connected
coordinate patch on $\, X \,$ and $\, z \,$ is a parameter on $\,
U \,$, we write $\, (z_1, z_2) \,$ for the induced parameters on
$\, U \times U \subset X \times X \,$. The parameter $\, z \,$
determines a trivialization (non-vanishing section) $\, dz \,$ of
$\, \Omega_X \,\vert\, U \,$. Fixing also an
isomorphism\footnote{Of course $\kappa$ is uniquely determined up
to a constant multiple. The isomorphism $\chi$ in
Proposition~\ref{main} does not depend on this multiple, but some
of the intermediate steps below do.} $\, \kappa :
(\Omega^{1/2})^{\otimes 2} \simeq \Omega_X \,$, we may choose a
trivialization $\, dz^{1/2} \,$ of $\,  \Omega^{1/2} \,\vert\, U
\,$ such that $\, \kappa(dz^{1/2}\otimes dz^{1/2}) = dz \,$ (there
are only two choices, differing by a sign).

\subsection*{A proof of Proposition~\ref{main}}
We shall use the following description of differential operators, which
goes back to Cauchy (see \cite{C}, p.~60, formule (4)).
\begin{proposition}
Let $\, \L \,$ be a line bundle on $\, X \,$. Then there is a canonical identification 
(of sheaves over $\, X \times X \,$)
\begin{equation}
\label{difff}
p_1^*(\L) \otimes p_2^*(\L^* \otimes \Omega_X)\,((n+1) \Delta)
\,\vert\, \Delta_n \, \simeq \, \D_n(\L) \,.
\end{equation}
\end{proposition}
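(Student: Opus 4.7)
The plan is to construct the isomorphism \eqref{difff} as the Cauchy residue kernel map and to verify bijectivity by a graded-piece comparison; this essentially recovers Cauchy's classical realization of differential operators as residues of meromorphic kernels.

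First I would define the map $\chi$: near a point of $\Delta$, a local section $K$ of $\F := p_1^*\L \otimes p_2^*(\L^* \otimes \Omega_X)((n+1)\Delta)$ acts on a local section $s$ of $\L$ by $\chi(K)(s)(z_1) := \operatorname{Res}_{z_2=z_1} \langle K(z_1,z_2),\, p_2^* s(z_2) \rangle$, where the bracket contracts $p_2^*\L^*$ against $p_2^*\L$ to produce an $\L_{z_1}$-valued meromorphic 1-form in $z_2$ with pole at $z_2=z_1$ of order at most $n+1$. Expanding $s$ in Taylor series around $z_1$ exhibits $\chi(K)$ as a differential operator of order at most $n$. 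Since residues of holomorphic $\Omega_X$-valued forms vanish, $\chi$ kills the subsheaf $\I_\Delta^{n+1}\F = p_1^*\L \otimes p_2^*(\L^*\otimes\Omega_X)$ and descends to a map $\bar\chi : \F|_{\Delta_n} \to \D_n(\L)$.

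Next I would verify that $\bar\chi$ is an isomorphism by comparing the natural filtrations. Filtering the left-hand side by pole order gives graded pieces $\F_k/\F_{k-1} \cong \F|_\Delta \otimes N_{\Delta/X\times X}^{\otimes(k+1)} \cong \Omega_X \otimes T_X^{\otimes(k+1)} \cong T_X^{\otimes k}$, using the canonical $\I_\Delta/\I_\Delta^2 \cong \Omega_X$ (so $N_{\Delta/X\times X} \cong T_X$) together with the obvious restrictions of $p_1^*\L$, $p_2^*\L^*$ and $p_2^*\Omega_X$ to $\Delta \cong X$. On the right-hand side, the principal symbol identifies $\D_k(\L)/\D_{k-1}(\L) \cong \Sym^k T_X \otimes \L \otimes \L^* \cong T_X^{\otimes k}$. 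A direct local calculation with the representative $K = \sigma(z_1) \otimes \tau(z_2)\, dz_2 / (z_2-z_1)^{k+1}$ (for trivializations $\sigma, \tau$ of $\L, \L^*$) shows that $\bar\chi$ sends the class of $K$ to the operator $\tfrac{1}{k!}\,\sigma\, \partial_z^k\, \tau$, whose symbol is $\tfrac{1}{k!}\partial_z^k$. The induced graded map is therefore multiplication by a nonzero scalar, and successive application of the five-lemma along the two filtrations yields that $\bar\chi$ itself is an isomorphism.

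The most delicate step will be verifying that the residue construction is coordinate-independent and respects both the left and right $\O_X$-module structures on $\D_n(\L)$, so that $\bar\chi$ is genuinely a morphism of sheaves on $X \times X$ rather than merely a morphism via either projection separately. This forces one to carry out the pole-order filtration and the normal-bundle identifications intrinsically, invoking the coordinate-free definition of the Poincar\'e residue along the divisor $\Delta$. Once this canonicity is in place, the graded-piece comparison and the five-lemma step are routine.
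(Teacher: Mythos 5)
Your proposal constructs the same Cauchy residue kernel map that the paper uses, and it is correct; the only real difference is in how the two of you verify that the residue map is an isomorphism. The paper gives a terse argument: it writes out the residue pairing, computes the $n=1$, $\L=\O_X$ case in a local parameter (obtaining $a(z)\,df/dz + b(z)f$), and then declares the proposition clear. You instead propose a systematic comparison of the pole-order filtration on $p_1^*\L\otimes p_2^*(\L^*\otimes\Omega_X)((n+1)\Delta)|_{\Delta_n}$ with the order filtration on $\D_n(\L)$, identifying both sets of graded pieces with $T_X^{\otimes k}$ (using $\I_\Delta/\I_\Delta^2\cong\Omega_X$ on the left and the principal symbol on the right), checking that the induced map on each graded piece is multiplication by $1/k!$, and then applying the five lemma. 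This is a cleaner, fully general way to close the gap that the paper leaves implicit, at the modest cost of one extra layer of bookkeeping. Your worry about coordinate-independence is somewhat overstated: the residue of a meromorphic $1$-form at a point is already intrinsic, and the compatibility with both $\O_X$-module structures is immediate from the formula ($p_1^*g$ scales the result by $g(z_1)$ giving post-multiplication, while $p_2^*g$ gets absorbed into the contraction with $s(z_2)$ giving pre-multiplication), so no further delicacy is required there. One small notational slip: in the line $\F_k/\F_{k-1}\cong \F|_\Delta\otimes N_\Delta^{\otimes(k+1)}$ the symbol $\F|_\Delta$ should denote the restriction of the untwisted bundle $p_1^*\L\otimes p_2^*(\L^*\otimes\Omega_X)$, not the restriction of $\F$ as you defined it (which already carries the $(n+1)\Delta$ twist); the subsequent identification $\cong\Omega_X\otimes T_X^{\otimes(k+1)}$ makes your intent clear, so this is cosmetic.
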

\begin{proof}
The action of a (local) section of the sheaf on the left of
\eqref{difff} on a section of $\, \L \,$ is given by contracting
with the factor $\, p_2^*(\L^*) \,$ and then taking the residue on
the diagonal of the resulting differential. Let us spell that out
in more detail in the case where $\, \L \,$ is the trivial bundle
and $\, n=1 \,$.  The sheaf on the left of \eqref{difff} is then
just $\, p_2^*(\Omega_X)(2\Delta) \,\vert\, \Delta_1 \,=\,
p_2^*(\Omega_X)(2\Delta) \,/\, p_2^*(\Omega_X) \,$. In terms of a
parameter $\, z \,$, a local section of this sheaf has the form
\begin{equation*}
\label{phi}
\frac{\varphi(z_1, z_2) \, dz_2}{(z_2 - z_1)^2}  \,
\text{ \ modulo regular terms}
\end{equation*}
(where $\, \varphi \,$ is regular). To see how this  acts on a
function $\, f(z) \,$, we have to calculate the residue
$$
\res_{z_2 = z_1}
\frac{f(z_2) \varphi(z_1, z_2) dz_2}{(z_2 - z_1)^2}
$$
($z_1$ is held fixed during the calculation). Expanding
$$
f(z_2) = f(z_1) + f'(z_1)(z_2 - z_1) + \;\ldots \ ,
$$
and
$$
\frac{\varphi(z_1, z_2)}{(z_2 - z_1)^2} =
\frac{a(z_1)}{(z_2 - z_1)^2} + \frac{b(z_1)}{z_2 - z_1} + \;\ldots \ ,
$$
we find that the residue is
$$
a(z) \frac{df}{dz} + b(z) f \Bigr\vert_{z=z_1}  \; .
$$
The proposition is now clear.
\end{proof}

Now let $\, U \,$ be a coordinate patch on $\, X \,$.  We consider
the classical\footnote{it is the principal part of the {\it
Szeg\"o kernel} on $\, X \times X \,$.} differential $\, \gamma
\,$ given in terms of a parameter $\, z \,$ by
\begin{equation}
\label{gamma}
\gamma \, :=\, \frac{dz_1^{1/2} dz_2^{1/2}}{z_1 - z_2} \ .
\end{equation}
It is a non-vanishing section (over $\, U \times U \,$) of the
line bundle
$$
p_1^*(\Omega^{1/2}) \otimes p_2^*(\Omega^{1/2})(\Delta)\;.
$$
It depends on the parameter $\, z \,$; however, its restriction to
$\, \Delta \,$  does not. Indeed, when we identify $\, \O_{X
\times X}(- \Delta) \,\vert\, \Delta \,$ with the canonical bundle
on the diagonal, $\, z_1 - z_2 \,$ corresponds to $\, dz \,$, so
$\, \gamma \,\vert\, \Delta \,$ becomes the constant section $1
\in \O(U) \,$. Furthermore, because $\, \gamma \,$ is skew in the
two variables, its restriction to $\, \Delta_1 \,$ is also
independent of the choice of $\, z \,$.  Thus for any sheaf $\,
\mathcal{M} \,$ over $\, X \times X \,$, multiplication by  $\,
\gamma \,$ gives a well-defined global isomorphism
$$
\mathcal{M} \,\vert\, \Delta_1 \,\simeq\,
\mathcal{M} \otimes p_1^*(\Omega^{1/2}) \otimes p_2^*(\Omega^{1/2})(\Delta)
\,\vert\, \Delta_1 \ .
$$
In particular, for any line bundle $\, \L \,$ over $\, X \,$, we
get an isomorphism
\begin{equation}
\label{is}
p_1^*(\L) \otimes p_2^*(\L^*)(\Delta) \,\vert\, \Delta_1 \,\simeq \,
p_1^*(\L \otimes \Omega^{1/2}) \otimes p_2^*(\L^* \otimes
\Omega^{1/2})(2 \Delta) \,\vert\, \Delta_1 \; .
\end{equation}
Tensoring our chosen isomorphism $\, \kappa :
(\Omega^{1/2})^{\otimes 2} \simeq \Omega_X \,$ with $\,
(\Omega^{1/2})^* \,$, we get an isomorphism $\, \Omega^{1/2} \,
\simeq \, (\Omega^{1/2})^* \otimes \Omega_X \,$, and hence for any
$\, \L \,$ an isomorphism
$$
\L^* \otimes \Omega^{1/2} \, \simeq \,
(\L \otimes \Omega^{1/2})^* \otimes \Omega_X \,.
$$
Inserting this into \eqref{is} and taking account of \eqref{ln}
and \eqref{difff} gives us an isomorphism (now independent of $\,
\kappa \,$)
\begin{equation}
\label{lis}
p_1^*(\L) \otimes p_2^*(\L^*)(\Delta) \,/ \,
p_1^*(\L) \otimes p_2^*(\L^*)(- \Delta) \,\simeq \,
\D_1(\L \otimes \Omega^{1/2}) \,.
\end{equation}
Taking $\, \L = \O_X \,$, we get Proposition~\ref{main}.
\subsection*{Remarks}
1.  Reversing the arguments in \cite{G}, we easily get from
\eqref{lis} a construction of any $\, \D(\L) \,$ as a sheaf of
`twisted deformed preprojective algebras'.\\

\noindent
2. The differential $\, \gamma \,$ in \eqref{gamma} is invariant
under a {\it linear fractional} change of parameter.  Thus if we
fix a projective structure on $\, X \,$ (thought of as an atlas
with linear fractional transition functions), then $\, \gamma \,$
is well-defined globally on some analytic neighbourhood of $\,
\Delta \,$, not merely on $\, \Delta_1 \,$.  This remark is the
starting point for the papers \cite{BR}.\\

\noindent
3.  The considerations above give an explicit formula for the
isomorphism  $\, \chi \,$ in Proposition~\ref{main}: an element of
$\, \O_{X \times X}(\Delta) / \O_{X \times X}(- \Delta) \,$ has a
unique local representative of the form
\begin{equation*}
\label{rep}
a(z_1)(z_2 - z_1)^{-1} + \,b(z_1) \;+ \;\ldots,
\end{equation*}
and $\, \chi \,$  maps this to the operator
\begin{equation*}
\label{def}
f\,dz^{1/2} \, \mapsto \, \Bigl( a(z) \frac{df}{dz} \, + \, b(z) f \Bigr)
\,dz^{1/2} \ .
\end{equation*}
In an earlier version of this note I verified
Proposition~\ref{main} by checking directly that the map $\, \chi
\,$ defined by this formula is independent of the chosen parameter
$\, z \,$; however,  the calculation is surprisingly complicated
(and unilluminating).

\bibliographystyle{amsalpha}

\end{document}